\title{Accelerating Inexact HyperGradient Descent for Bilevel Optimization}
\author[1]{Haikuo Yang}
\author[1]{Luo Luo}
\author[2]{Chris Junchi Li}
\author[2,3]{Michael I.~Jordan}
\affil[1]{\normalsize School of Data Science, Fudan University}
\affil[2]{\normalsize Department of Electrical Engineering and Computer Sciences, University of California, Berkeley}
\affil[3]{\normalsize Department of Statistics, University of California, Berkeley}
\date{\today}
\def\eqref#1{equation~(\ref{#1})}
\def\ceil#1{\left\lceil #1 \right\rceil}
\def\floor#1{\left\lfloor #1 \right\rfloor}
\def\1{\bf{1}}
\newcommand{\norm}[1]{\left\| #1 \right\|_2}
\def\inner#1#2{\langle #1, #2 \rangle}
\def\fD{{\mathcal{D}}}
\def\fH{{\mathcal{H}}}
\def\fK{{\mathcal{K}}}
\def\fO{{\mathcal{O}}}
\def\sB{{\mathbb{B}}}
\def\BR{{\mathbb{R}}}
\def\mA {{\bf A}}
\def\mH {{\bf H}}
\def\mI {{\bf I}}
\def\mU {{\bf U}}
\DeclareMathOperator*{\argmin}{arg\,min}
\def\Ddots{\mathinner{\mkern1mu\raise\p@
\vbox{\kern7\p@\hbox{.}}\mkern2mu
\raise4\p@\hbox{.}\mkern2mu\raise7\p@\hbox{.}\mkern1mu}}
\def\pr#1{\left( #1 \right)}
\def\fpr#1{\left\{ #1 \right\}}
\newcommand*{\rom}[1]{\expandafter\@slowromancap\romannumeral #1@}
\theoremstyle{plain}
\newtheorem{theorem}{Theorem}[section]
\newtheorem{proposition}[theorem]{Proposition}
\newtheorem{lemma}[theorem]{Lemma}
\newtheorem{corollary}[theorem]{Corollary}
\newtheorem{condition}[theorem]{Condition}
\theoremstyle{definition}
\newtheorem{definition}[theorem]{Definition}
\newtheorem{assumption}[theorem]{Assumption}
\theoremstyle{remark}
\newtheorem{remark}[theorem]{Remark}
\def\pb{}
\begin{document}

\pagenumbering{arabic}

\maketitle

\begin{abstract}
We present a method for solving general nonconvex-strongly-convex bilevel optimization problems. Our method---the \emph{Restarted Accelerated HyperGradient Descent} (\texttt{RAHGD}) method---finds an $\epsilon$-first-order stationary point of the objective with $\tilde{\mathcal{O}}(\kappa^{3.25}\epsilon^{-1.75})$ oracle complexity, where $\kappa$ is the condition number of the lower-level objective and $\epsilon$ is the desired accuracy. We also propose a perturbed variant of \texttt{RAHGD} for finding an $\big(\epsilon,\mathcal{O}(\kappa^{2.5}\sqrt{\epsilon}\,)\big)$-second-order stationary point within the same order of oracle complexity. Our results achieve the best-known theoretical guarantees for finding stationary points in bilevel optimization and also improve upon the existing upper complexity bound for finding second-order stationary points in nonconvex-strongly-concave minimax optimization problems, setting a new state-of-the-art benchmark. Empirical studies are conducted to validate the theoretical results in this paper.
\end{abstract}

\pb\section{Introduction}\label{intro}
Bilevel optimization is emerging as a key unifying problem formulation in machine learning, encompassing a variety of applications including meta-learning, model-free reinforcement learning and hyperparameter  optimization~\citep{franceschi2018bilevel,stadie2020learning}.
Our work focuses on a version of the general problem that is particularly
relevant to machine learning---the \emph{nonconvex-strongly-convex bilevel optimization problem}:
\begin{subequations}\label{bilevel_ab}
\begin{align}\label{bilevel_a}
&
\min_{x\in\BR^{d_x}}~
\Phi(x) \triangleq f(x, y^*(x))
,\\
& {\rm s.t}~~~~~
y^*(x) = \argmin_{y\in\BR^{d_y}}~g(x,y)
,
\label{bilevel_b}
\end{align}
\end{subequations}
where the upper-level function $f(x,y)$ is smooth and possibly nonconvex, and the lower-level function $g(x,y)$ is smooth and strongly convex with respect to $y$ for any given~$x$.%
\footnote{As the readers will see in Lemma~\ref{lem:gradient_Lip} additional smoothness conditions capture the smoothness of the overall objective function $\Phi(x)$.}
Bilevel optimization is more expressive but harder to solve than classical single-level optimization since the objective $\Phi(x)$ in~(\ref{bilevel_a}) involves the argument input $y^*(x)$ which is the solution of the lower-level problem~(\ref{bilevel_b}).
In contradistinction to classical optimization, bilevel optimization problem~(\ref{bilevel_ab}) involves solving an optimization problem where the minimization variable is taken as the minimizer of a lower-level optimization problem.

Most existing work on nonconvex-strongly-convex bilevel optimization~\citep{ghadimi2018approximation,ji2021bilevel,ji2022will} focuses on finding approximate \emph{first-order stationary points} (FOSP) of the objective.
Recently, \citet{huang2022efficiently} extended the scope of work in this area, proposing the  (perturbed) \emph{approximate implicit differentiation} (AID) algorithm which can find an $\big(\epsilon,\fO(\kappa^{2.5}\sqrt{\epsilon}\,)\big)$-\emph{second-order stationary points} (SOSP) within a $\tilde\fO(\kappa^4\epsilon^{-2})$ oracle complexity, where $\kappa\ge 1$ is the condition number of any $f(x,\cdot)$ and $\epsilon>0$ is the desired accuracy.
Given this result, a key further challenge is to study whether the $\epsilon^{-2}$-dependency in the upper complexity bound can be improved under additional Lipschitz assumptions on high-order derivatives~\citep{huang2022efficiently}.%
\footnote{It is worth noting that the $O(\epsilon^{-2})$ complexity is \emph{optimal} for finding an \emph{$\epsilon$-first-order stationary point} in terms of the dependency on $\epsilon$ under the Lipschitz gradient assumption~\citep{carmon2020lower} when $\kappa$ is treated as an $O(1)$-constant.
This is primarily due to that nonconvex optimization can be viewed as a special case of our bilevel problem, and hence the hard instance can be inherited to prove the analogous lower bound.}

Given this context, a natural question to ask is:
\emph{Can we design an algorithm that improves upon known algorithmic complexities for finding approximate first-order and second-order stationary points in nonconvex-strongly-convex bilevel optimization?}

\pb\subsection{Contributions}
We resolve this question by designing a particular form of acceleration of hypergradient descent and thereby improving the oracle complexity.
Our contributions are four-fold:

\begin{enumerate}[leftmargin=5mm,label=(\roman*)]
\item
We propose a method that we refer to as \emph{Restarted Accelerated Hypergradient Descent} (\texttt{RAHGD}) that applies Nesterov's \emph{accelerated gradient descent} (AGD) to approximate the solution $y^*(x)$ of the inner problem (\ref{bilevel_b}) and combines it with the \emph{conjugate gradient} (CG) method to construct an inexact hypergradient of the objective.
The algorithm makes use of proper restarting and acceleration to optimize the objective $\Phi(\cdot)$ based on the obtained inexact hypergradient.
We show that \texttt{RAHGD} can find an $\epsilon$-FOSP of the objective within $\fO(\kappa^{3.25}\epsilon^{-1.75})$ first-order oracle queries [Section~\ref{sec_convergence}].

\item
For the task of finding approximate second-order stationary points, we add a perturbation step to \texttt{RAHGD} and introduce the \emph{Perturbed Restarted Accelerated HyperGradient Descent} (\texttt{PRAHGD}) algorithm.
We show that \texttt{PRAHGD} can efficiently escape saddle points and find an~$\big(\epsilon,\fO(\kappa^{2.5}\sqrt{\epsilon}\,)\big)$-second-order stationary point of the objective $\Phi$ within $\tilde\fO\big(\kappa^{3.25}\epsilon^{-1.75}\big)$ oracle queries.
This improves over the best known complexity in bilevel optimization due to~\citet{huang2022efficiently} by a factor of~$\tilde\fO(\kappa^{0.75}\epsilon^{-0.25})$ [Section~\ref{sec_perturb}].

\item
We apply the theoretical framework of \texttt{PRAHGD} to the problem of minimax optimization.
Specially, we propose a \texttt{PRAHGD} variant crafted for nonconvex-strongly-concave minimax optimization. We refer to the resulting algorithm as \emph{Perturbed Restarted Accelerate Gradient Descent Ascent} (\texttt{PRAGDA}). We show that \texttt{PRAGDA} provably finds an $
\fO\big(\epsilon,\fO(\kappa^{1.5}\sqrt{\epsilon}\,)\big)
$-SOSP with a first-order oracle query complexity of $
\tilde\fO(\kappa^{1.75}\epsilon^{-1.75})
$.
This improves upon the best known first-order (including gradient/Hessian-vector/Jacobian-vector-product) oracle query complexity bound of $
\tilde\fO(\kappa^{1.5}\epsilon^{-2}+\kappa^{2}\epsilon^{-1.5})
$ due to \citet{luo2022finding} [Section~\ref{sec_indications}].

\item
We conduct a variety of empirical studies of bilevel optimization.
Specifically, we evaluate the effectiveness of our proposed algorithms (\texttt{RAHGD} / \texttt{PRAHGD} / \texttt{PRAGDA}) by applying them to three different tasks: a synthetic minimax problem, data hypercleaning for the MNIST dataset, and hyperparameter optimization for logistic regression.
Our studies demonstrate that our algorithms outperform several established baseline algorithms, such as BA, AID-BiO, ITD-BiO, PAID-BiO and iMCN, with inevitably faster empirical convergence.
The results provide empirical evidence in support of the effectiveness of our proposed algorithmic framework for bilevel and minimax optimization [Appendix~\ref{sec:experiment}].
\end{enumerate}

\pb\subsection{Overview of Our Algorithm Design and Main Techniques}\label{sec:overview}
We overview the algorithm design in this subsection.
Inspired by the success of the accelerated gradient descent method for nonconvex optimization~\citep[see, e.g.,][]{jin2018accelerated,li2022restarted}, we propose a novel method called the~\emph{restarted accelerated hypergradient descent} (\texttt{RAHGD}) algorithm.
The gradient of $\Phi(x)$, which we called the \emph{hypergradient}, can be computed via the following equation~\citep{ghadimi2018approximation,ji2021bilevel}:
\begin{equation}\label{equ:hypergradient}
\nabla\Phi(x)
=
\nabla_x f(x, y^*(x))
-\nabla^2_{xy} g(x, y^*(x)) \big(\nabla^2_{yy} g(x, y^*(x))\big)^{-1} \nabla_y f(x, y^*(x))
.
\end{equation}
Unfortunately, directly applying first-order algorithms by iterating with the exact hypergradient $\nabla\Phi(x)$ is costly or intractable for large-scale problems, given the need to obtain $y^*(x)$ and particularly given the need to invert the matrix $\nabla^2_{yy} g(x, y^*(x))$.

For given $x=x_k\in\BR^{d_x}$, we aim to construct an estimate of~$\nabla \Phi(x_k)$ with reasonable computational cost and sufficient accuracy.
The strong convexity of $g(x_k,\cdot)$ motivates us to apply AGD for finding $y_k\approx y^*(x_k)$ and hence used as a replacement of arguments estimating $\nabla\Phi(x_k)$.
To avoid direct computation of the term $\big(\nabla^2_{yy} g(x_{k}, y_{k})\big)^{-1} \nabla_y f(x_{k}, y_{k})$, we observe that it is the solution of the following quadratic problem:
\begin{align}\label{prob:CG}
\min_{v\in\BR^{d_y}}~
\frac{1}{2}v^\top\nabla^2_{yy} g(x_{k}, y_{k})v
-
v^\top \nabla_y f(x_{k}, y_{k})
\end{align}
Accordingly, we can fastly estimate $v_k\approx\big(\nabla^2_{yy} g(x_{k}, y_{k})\big)^{-1} \nabla_y f(x_{k}, y_{k})$ and solve (\ref{prob:CG}) using a conjugate gradient subroutine.
Based on $y_k$ and $v_k$, we obtain the expression for an \emph{inexact hypergradient}:
\begin{align}\label{equ:inexact hypergradient}
\hat{\nabla} \Phi(x_k)
=
\nabla_x f(x_{k}, y_{k}) - \nabla^2_{xy} g(x_{k}, y_{k}) v_{k}
\end{align}
which can serve as a surrogate of the true hypergradient $\nabla \Phi(x_k)$ in our first-order algorithmic design.

We formally present \texttt{RAHGD} in Algorithm~\ref{alg:AHGD}. 
The main issue to address for \texttt{RAHGD} is the computational cost for achieving sufficient accuracy of $\hat{\nabla} \Phi(x_k)$.
Interestingly, our theoretical analysis shows that all of the additional cost arises from the computations of $y_k$ and $v_k$, and it can thus be bounded sharply. 
As a result, our algorithm can find approximate first-order stationary points with reduced oracle complexities than existing methods \citep{ghadimi2018approximation,ji2021bilevel}.
We also introduce the \emph{perturbed} \texttt{RAHGD} (\texttt{PRAHGD}) 
in Algorithm~\ref{alg:AHGD} for escaping saddle points.
Extending the analysis of \texttt{RAHGD}, we show that \texttt{PRAHGD} can find approximate second-order stationary points more efficiently than existing arts~\citep{huang2022efficiently}.

\pb\subsection{Related Work}
The subject of bilevel optimization problem has a long history with early work tracing back to the 1970s \citep{bracken1973mathematical}.
Recent algorithmic advances in this field have driven successful applications in areas such as meta-learning \citep{bertinetto2018meta,franceschi2018bilevel,ji2020convergence}, reinforcement learning \citep{konda1999actor,stadie2020learning,hong2020two} and hyperparameter optimization \citep{feurer2019hyperparameter,shaban2019truncated,grazzi2020iteration}.

There have also been theoretical advances in bilevel optimization in recent years.
\citet{ghadimi2018approximation} presented a convergence rate for the AID approach when $f(x,y)$ is convex, analyzing the complexity of an accelerated algorithm that uses gradient descent to approximate $y^*(x_k)$ in the inner loop and uses AGD in the outer loop.
Further improvements in dependence on the condition number and analysis of the convergence were achieved via the \emph{iterative differentiation} (ITD) approach by~\citet{ji2021bilevel,ji2022will}, who analyzed the complexity of AID
and ITD and also provided a complexicity analysis for a randomized version.
\citet{hong2020two} proposed the TTSA algorithm---a provable single-loop algorithm that updates two variables in an alternating manner%
---and presented applications to the problem of reinforcement learning under randomized scenarios.
For stochastic bilevel problems, various methods have been proposed, such as BSA by~\citet{ghadimi2018approximation}, TTSA by~\citet{hong2020two}, stocBiO by~\citet{ji2021bilevel}, and ALSET by~\citet{chen2021closing}.
More recent research on this front has focused on variance reduction and momentum techniques, resulting in cutting-edge stochastic first-order oracle complexities.

While much of the literature on bilevel optimization has focused on finding first-order stationary points, the problem of finding second-order stationary points has been largely unadressed.
\citet{huang2022efficiently} recently proposed a perturbed algorithm for finding approximate second-order stationary points.
The algorithm adopts gradient descent (GD) to approximately solve the lower-level minimization problem and conjugate gradient (CG) to solve for Hessian-vector product with GD used in the outer loop.
For the problem of classical optimization, second-order methods such as those proposed in~\citet{nesterov2006cubic,curtis2017trust} have been used to obtain $\epsilon$-accurate SOSPs in single-level optimization with a complexity of $\fO(\epsilon^{-1.5})$; however, they require expensive operations such as inverting Hessian matrices.
A significant body of recent literature has been focusing on first-order methods for obtaining an approximate $\big(\epsilon,\mathcal{O}(\kappa^{2.5}\sqrt{\epsilon}\,)\big)$-SOSP, with the best-known query complexity of $\tilde{\fO}(\epsilon^{-1.75})$ of gradient and Hessian-vector products~\citep{agarwal2017finding,carmon2018accelerated,carmon2017convex,jin2017escape,jin2018accelerated,li2022restarted}.

An important special case of the bilevel optimization problem~(\ref{bilevel_ab})---the problem of minimax optimization, where $g = -f$ in Eq.~(\ref{bilevel_b})---has been extensively studied in the literature (with $g = -f$ set in Eq.~(\ref{bilevel_b})).
Minimax optimization has been the focus of attention in the machine learning community recently due to its applications to training GANs~\citep{goodfellow2020generative,arjovsky2017wasserstein}, to adversarial learning~\citep{goodfellow2014explaining,sinha2017certifying} and to optimal transport~\citep{lin2020projection,huang2021riemannian}.
On the theoretically font,~\citet{nouiehed2019solving,jin2020local} studied the complexity of Multistep Gradient Descent Ascent (GDmax), and
\citet{lin2020gradient,lu2020hybrid} provided the first convergence analysis for the single-loop \emph{gradient descent ascent} (GDA) algorithm.
More recently,~\citet{luo2020stochastic} applied the stochastic variance reduction technique to the nonconvex-strongly-concave case, achieving the first optimal complexity upper bound when $\kappa$ is treated as an $O(1)$-constant.
\citet{zhang2020single} proposed a stabilized smoothed GDA algorithm that achieves a better complexity for the nonconvex-concave problem.
\citet{fiez2021global} provided asymptotic results showing that GDA converges to a local minimax point almost surely.
Nevertheless, to the best of our knowledge, all the previous works targeted finding approximate stationary points of $\Phi(x)$, and the theory for finding the local minimax points is absent in the literature.
It was not until very recently that~\citet{luo2022finding,chen2021escaping} independently proposed (inexact) cubic-regularized Newton methods for solving this problem; these are second-order algorithms that provably converge to a local minimax point.
These algorithms are limited, however, to minimax optimization and they cannot be used to solve the more general bilevel optimization problems.

\paragraph{Organization.}
The rest of this work is organized as follows. 
Section~\ref{sec_preliminaries} delineates the assumptions and specific algorithmic subroutines.
Section~\ref{sec_convergence} formally presents the \texttt{RAHGD} algorithm along with its complexity bound for finding approximation first-order stationary points.
Section~\ref{sec_perturb} proposes the \texttt{PRAHGD}, the perturbed version of \texttt{RAHGD}, along with its complexity bound for finding approximate second-order stationary points.
Section~\ref{sec_indications} presents the application for minimax optimization.
Section~\ref{sec_conclude} concludes the paper and discusses future directions.
Presentations of technical analysis and empirical studies are deferred to the supplementary materials.

\paragraph{Notation.}
We let $\norm{\cdot}$ be the spectral norm of matrices and the Euclidean norm of vectors. 
Given a real symmetric matrix $A$, we let $\lambda_{\max}(A)$ ($\lambda_{\min}(A)$) denote its largest (smallest) eigenvalue.
We use the notation $\sB(r)$ to present the closed Euclidean ball with radius $r$ centered at the origin.
We denote $Gc(f,\epsilon)$, $JV(f,\epsilon)$ and $HV(f,\epsilon)$ as the oracle complexities of gradients, Jacobian-vector products and Hessian-vector products, respectively.
Finally, we adopt the notation $\mathcal{O}(\cdot)$ to hide only absolute constants which do not depend on any problem parameters, and also~$\tilde{\mathcal{O}}(\cdot)$ for constants that include a polylogarithmic factor.

\begin{table*}[t]
\begin{center}
\caption{Comparison of complexities for nonconvex bilevel optimization algorithms of finding approximate FOSPs.}\vskip0.15cm
\label{table:comparision_fir}
\begin{tabular}{ccccc}
\toprule
Algorithm
&
$Gc(f,\epsilon)$
&
$Gc(g,\epsilon)$
&
$JV(g,\epsilon)$
&
$HV(g,\epsilon)$ 
\\
\midrule
BA~\citep{ghadimi2018approximation}
&
$\fO(\kappa^4\epsilon^{-2})$
&
$\fO(\kappa^5\epsilon^{-2.5})$
&
$\fO(\kappa^4\epsilon^{-2})$
&
$\tilde\fO(\kappa^{4.5}\epsilon^{-2})$
\\
AID-BiO~\citep{ji2021bilevel}
&
$\fO(\kappa^3\epsilon^{-2})$
&
$\fO(\kappa^4\epsilon^{-2})$
&
$\fO(\kappa^3\epsilon^{-2})$
&
$\fO(\kappa^{3.5}\epsilon^{-2})$
\\
ITD-BiO~\citep{ji2021bilevel}
&
$\fO(\kappa^3\epsilon^{-2})$
&
$\tilde\fO(\kappa^4\epsilon^{-2})$
&
$\tilde\fO(\kappa^4\epsilon^{-2})$
&
$\tilde\fO(\kappa^{4}\epsilon^{-2})$
\\
\texttt{RAHGD}~(this work)
&
$\tilde\fO(\kappa^{2.75}\epsilon^{-1.75})$
&
$\tilde\fO(\kappa^{3.25}\epsilon^{-1.75})$
&
$\tilde\fO(\kappa^{2.75}\epsilon^{-1.75})$
&
$\tilde\fO(\kappa^{3.25}\epsilon^{-1.75})$
\\
\bottomrule
\end{tabular}
\end{center}
\end{table*}

\begin{table*}[t]
\begin{center}
\caption{Comparison of complexities for nonconvex bilevel optimization algorithms of finding approximate SOSPs.}
\label{table:comparision_sec}
\vskip0.15cm
\begin{tabular}{ccccc}
\toprule
Algorithm
&
$Gc(f,\epsilon)$
&
$Gc(g,\epsilon)$
&
$JV(g,\epsilon)$
&
$HV(g,\epsilon)$
\\
\midrule
Perturbed AID~\citep{huang2022efficiently}
&
$\tilde\fO(\kappa^3\epsilon^{-2})$
&
$\tilde\fO(\kappa^4\epsilon^{-2})$
&
$\tilde\fO(\kappa^3\epsilon^{-2})$
&
$\tilde\fO(\kappa^{3.5}\epsilon^{-2})$
\\
\texttt{PRAHGD}~(this work)
&
$\tilde\fO(\kappa^{2.75}\epsilon^{-1.75})$
&
$\tilde\fO(\kappa^{3.25}\epsilon^{-1.75})$
&
$\tilde\fO(\kappa^{2.75}\epsilon^{-1.75})$
&
$\tilde\fO(\kappa^{3.25}\epsilon^{-1.75})$
\\
\bottomrule
\end{tabular}
\end{center}
\begin{tablenotes}\footnotesize
\item 
---
Notation $\tilde\fO$ omits a polylogarithmic factor in relevant parameters. $\kappa$: condition number of the lower-level objective.
\item
---
$Gc(f,\epsilon)$ and $Gc(g,\epsilon)$: number of gradient evaluations w.r.t.~$f$ and $g$.
\item
---
$JV(g,\epsilon)$: number of Jacobian-vector products $\nabla_{xy}^2 g(x, y)v$.
\item
---
$HV(g,\epsilon)$: number of Hessian-vector products $\nabla_{yy}^2 g(x, y)v$.
\end{tablenotes}
\end{table*}

\pb\section{Preliminaries}\label{sec_preliminaries}
In this section, we first proceed to establish convergence of the algorithmic subroutines related to our algorithm---\emph{accelerated gradient descent} and the \emph{conjugate gradient method}.
Then, we present the notations and assumptions necessary for our problem setting.
We proceed to establish convergence of these two algorithmic subroutines in the following paragraphs.

\begin{algorithm}[!b]
\caption{${\rm AGD} (h, z_0, T, \alpha, \beta)$}
\label{alg:AGD}
\begin{algorithmic}[1]
\STATE \textbf{Input:}
objective $h(\cdot)$;
initialization $z_0$;
iteration number $T\ge 1$;
step-size $\alpha>0$;
momentum param.~$\beta\in (0,1)$
\\[0.05cm]
\STATE $\tilde z_0\leftarrow z_0$ \\[0.05cm]
\STATE\textbf{for} $t=0,\dots,T-1$ \textbf{do} \\[0.05cm]
\STATE\quad $z_{t+1}\leftarrow{\tilde z}_t-\alpha\nabla h(\tilde z_t)$ \\[0.1cm]
\STATE\quad ${\tilde z}_{t+1} \leftarrow z_{t+1}+\beta(z_{t+1}-z_t)$ \\[0.1cm]
\STATE\textbf{end for} \\[0.05cm]
\STATE\textbf{Output:} $z_T$
\end{algorithmic}
\end{algorithm}

\paragraph{Subroutine 1: Accelerated Gradient Descent.}
Our first component is \emph{Nesterov's accelerated gradient descent} (AGD), which is an acceleration of the first-order method in smooth convex optimization.
We describe the details of AGD for minimizing a given smooth and strongly convex function in Algorithm~\ref{alg:AGD}, which exhibits the following \emph{optimal} convergence rate~\citep{nesterov2013introductory}:

\begin{lemma}[\citep{nesterov2013introductory}]\label{lem:AGD}
Running Algorithm~\ref{alg:AGD} on an $\ell_h$-smooth and $\mu_h$-strongly convex objective function $h(\cdot)$ with $\alpha = 1/\ell_h$ and $\beta = (\sqrt{\kappa_h}-1)/(\sqrt{\kappa_h}+1)$ produces an output $z_T$ satisfying 
$$
\|z_T - z^*\|_2^2
\le
(1+\kappa_h)\left(1-\frac{1}{\sqrt{\kappa_h}}\right)^T\|z_0 - z^*\|_2^2
,
$$
where $z^* = \argmin_z h(z)$ and $\kappa_h = \ell_h/\mu_h$ denotes the condition number of the objective $h$.
\end{lemma}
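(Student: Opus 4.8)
This is the classical linear-convergence bound for Nesterov's accelerated gradient method on a smooth strongly convex objective, and I would follow the standard potential-function route, reducing the claim about the iterate distance $\|z_T - z^*\|^2$ to a linear decay of a suitable Lyapunov functional together with a change of measure at the two endpoints. The first step is to recast the two-sequence recursion of Algorithm~\ref{alg:AGD} --- the gradient step $z_{t+1} = \tilde z_t - \alpha\nabla h(\tilde z_t)$ followed by the extrapolation $\tilde z_{t+1} = z_{t+1} + \beta(z_{t+1} - z_t)$ --- in the equivalent three-sequence form that introduces a ``mirror'' sequence $w_t$: one checks that, with $\alpha = 1/\ell_h$ and $\beta = (\sqrt{\kappa_h}-1)/(\sqrt{\kappa_h}+1)$, the point $\tilde z_t$ is a fixed convex combination of $z_t$ and $w_t$ and that $w_t$ obeys a mirror-descent-type update driven by $\nabla h(\tilde z_t)$. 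Since $\tilde z_0 = z_0$ in the algorithm, this forces $w_0 = z_0$.

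\smallskip\noindent
The technical core is to show that the potential $\mathcal{E}_t \triangleq h(z_t) - h(z^*) + \tfrac{\mu_h}{2}\|w_t - z^*\|^2$ contracts, namely $\mathcal{E}_{t+1} \le \big(1 - 1/\sqrt{\kappa_h}\big)\,\mathcal{E}_t$ for every $t$, and this is the step I expect to be the main obstacle. I would derive it by combining four ingredients: (i) the descent lemma at $\tilde z_t$ from $\ell_h$-smoothness, which bounds $h(z_{t+1})$ by $h(\tilde z_t) - \tfrac{1}{2\ell_h}\|\nabla h(\tilde z_t)\|^2$; (ii) the exact expansion of $\|w_{t+1} - z^*\|^2$ using the update for $w_t$; (iii) the $\mu_h$-strong-convexity lower bounds $h(z^*), h(z_t) \ge h(\tilde z_t) + \langle \nabla h(\tilde z_t),\, \cdot - \tilde z_t\rangle + \tfrac{\mu_h}{2}\|\cdot - \tilde z_t\|^2$; and (iv) the identity expressing $\tilde z_t$ as the prescribed convex combination of $z_t$ and $w_t$, which is exactly what makes all the inner-product cross-terms cancel, leaving a manifestly nonpositive multiple of a squared norm. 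The choices $\alpha = 1/\ell_h$ and $\beta = (\sqrt{\kappa_h}-1)/(\sqrt{\kappa_h}+1)$ are precisely the ones that calibrate these cancellations and produce the contraction factor $1 - 1/\sqrt{\kappa_h}$; equivalently, one may run Nesterov's estimate-sequence argument with quadratic lower models $\phi_t(z) = \phi_t^* + \tfrac{\mu_h}{2}\|z - w_t\|^2$ and scaling $\lambda_t = (1-1/\sqrt{\kappa_h})^t$, for which I would cite~\citep{nesterov2013introductory}.

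\smallskip\noindent
Finally I would unroll the recursion and translate it back. Telescoping the one-step contraction gives $\mathcal{E}_T \le (1 - 1/\sqrt{\kappa_h})^T\,\mathcal{E}_0$. For the lower endpoint, $\mu_h$-strong convexity together with $\nabla h(z^*) = 0$ yields $\tfrac{\mu_h}{2}\|z_T - z^*\|^2 \le h(z_T) - h(z^*) \le \mathcal{E}_T$. For the upper endpoint, $\ell_h$-smoothness with $\nabla h(z^*) = 0$ yields $h(z_0) - h(z^*) \le \tfrac{\ell_h}{2}\|z_0 - z^*\|^2$, and since $w_0 = z_0$ this gives $\mathcal{E}_0 \le \tfrac{\ell_h + \mu_h}{2}\|z_0 - z^*\|^2 = \tfrac{\mu_h(1+\kappa_h)}{2}\|z_0 - z^*\|^2$. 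Chaining the three bounds and dividing by $\mu_h/2$ yields $\|z_T - z^*\|^2 \le (1 + \kappa_h)\big(1 - 1/\sqrt{\kappa_h}\big)^T\|z_0 - z^*\|^2$, which is the stated inequality.
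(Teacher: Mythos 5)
Your proposal is correct: it is the standard estimate-sequence/potential-function argument for Nesterov's method on smooth strongly convex objectives, with the right contraction factor $1-1/\sqrt{\kappa_h}$ and the correct endpoint conversions ($\mu_h$-strong convexity at $z_T$, $\ell_h$-smoothness at $z_0$) that together yield the prefactor $1+\kappa_h$. The paper does not prove this lemma itself but simply cites \citet{nesterov2013introductory}, and your sketch is essentially the argument that citation refers to, so there is nothing further to reconcile.
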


\begin{algorithm}[!tb]
\caption{${\rm CG} (A, b, T, q_0)$}
\label{alg:CG}
\begin{algorithmic}[1]
\STATE \textbf{Input:}
quadratic objective (as in Eq.~(\ref{equ:CG}));
initialization $q_0$;
iteration number $T\ge 1$
\STATE
$r_0\leftarrow Aq_0-b$
,
$p_0\leftarrow-r_0$
\\[0.05cm]
\STATE\textbf{for} $t= 0,\dots,T-1$ \textbf{do}
\\[0.05cm]
\STATE\quad $\alpha_t \leftarrow \dfrac{r_t^Tr_t}{p_t^T A p_t}$
\\[0.1cm]
\STATE\quad $q_{t+1} \leftarrow q_t + \alpha_t p_t $ \\[0.1cm]
\STATE\quad $r_{t+1} \leftarrow r_t + \alpha_t A p_t $ \\[0.1cm]
\STATE\quad $\beta_{t+1} \leftarrow \dfrac{r_{t+1}^Tr_{t+1}}{r_t^Tr_t} $ \\[0.1cm]
\STATE\quad $p_{t+1} \leftarrow -r_{t+1} + \beta_{t+1} p_t $ \\[0.1cm]
\STATE\textbf{end for} \\[0.05cm]
\STATE\textbf{Output:} $q_T$
\end{algorithmic}
\end{algorithm}

\paragraph{Subroutine 2: Conjugate Gradient Method.}
The \emph{(linear) conjugate gradient} (CG) method was proposed by Hestenes and Stiefel in the 1950s as an iterative method for solving linear systems with positive definite coefficient matrices.
It serves as an alternative to Gaussian elimination that is well-suited for solving large problems.
CG can be formulated as the minimization of the quadratic objective function
\begin{equation}\label{equ:CG}
\frac{1}{2}q^\top Aq - q^\top b
,
\end{equation}
where $A\in\BR^{d\times d}$ is a positive definite matrix and $b\in\BR^d$ is a fixed vector.
We summarize the setup of CG for minimizing function~(\ref{equ:CG}) in Algorithm~\ref{alg:CG}, and record the following convergence property~\citep{nocedal2006numerical}:

\vskip .1in

\begin{lemma}[\citep{nocedal2006numerical}]\label{lem:CG}
Running Algorithm~\ref{alg:CG} for minimizing quadratic function~(\ref{equ:CG}) produces $q_T$ satisfying
$$
\|q_T-q^*\|_2
\le
2\sqrt{\kappa_A}\left(\dfrac{\sqrt{\kappa_A}-1}{\sqrt{\kappa_A}+1} \right)^T\|q_0 - q^*\|_2
, 
$$
where $q^* = A^{-1} b$ denotes the unique minimizer of Eq.~(\ref{equ:CG}), and $\kappa_A = \lambda_{\max}(A) / \lambda_{\min}(A)$ denotes the condition number of (positive definite) matrix $A$.%
\footnote{%
When minimizing the quadratic objective~\eqref{equ:CG}, CG and AGD enjoy comparable convergence speeds.
In fact in squared Euclidean metric, Lemma~\ref{lem:CG} implies a convergence rate of $
\|q_T-q^*\|_2^2
\le
4\kappa_A\exp\left(-\frac{4}{\sqrt{\kappa_A}+1}\cdot T\right)\|q_0 - q^*\|_2^2
$ and hence running CG instead of AGD for~\eqref{equ:CG} improves the coefficient in the exponent by an asymptotic factor of four while maintaining the $\fO(\kappa_A)$-prefactor up to a numerical constant.
Our two algorithmic subroutines AGD and CG are logically connected, and our adoption of CG whenever possible is partly due to its additional advantage of requiring fewer input parameters (corresponding to $\alpha$, $\beta$ in Algorithm~\ref{alg:AGD}).
}
\end{lemma}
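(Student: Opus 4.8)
The plan is to derive Lemma~\ref{lem:CG} from the classical Krylov-subspace optimality property of the conjugate gradient iterates together with a Chebyshev-polynomial estimate, and then to convert the resulting $A$-weighted bound into a Euclidean bound. Throughout write $q^* = A^{-1}b$, $r_0 = Aq_0 - b$, and $\Norm{u}_A^2 \triangleq u^\top A u$.

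First I would record the defining property of the iterates of Algorithm~\ref{alg:CG}: after $t$ steps, $q_t$ minimizes $\Norm{q-q^*}_A^2$ over the affine Krylov subspace $q_0 + \fK_t$, where $\fK_t = \spn\{r_0, Ar_0,\dots,A^{t-1}r_0\}$. This is the step that uses the specific recursions for $\alpha_t$ and $\beta_{t+1}$: one checks by induction that the residuals $r_s$ are mutually orthogonal, that the search directions $p_s$ are mutually $A$-conjugate, and that $\spn\{p_0,\dots,p_{t-1}\} = \fK_t$; $A$-conjugacy then forces $q_t$ to be the $A$-orthogonal projection of $q^*$ onto $q_0+\fK_t$, so $q_t$ is the minimizer claimed.

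Second I would turn this into a scalar polynomial extremal problem. Every $q\in q_0+\fK_t$ has the form $q = q_0 + P(A)r_0$ with $\deg P\le t-1$, and since $r_0 = A(q_0-q^*)$ the error is $q-q^* = R(A)(q_0-q^*)$ with $R(\lambda)=1+\lambda P(\lambda)$ ranging over all polynomials of degree $\le t$ with $R(0)=1$. Diagonalizing $A$ in an orthonormal eigenbasis $\{u_i\}$ with eigenvalues $\lambda_i\in[\lambda_{\min}(A),\lambda_{\max}(A)]$ gives
$$
\Norm{q_t-q^*}_A^2 = \min_{R:\,\deg R\le t,\,R(0)=1}\ \sum_{i}\lambda_i R(\lambda_i)^2\inner{u_i}{q_0-q^*}^2 \le \Big(\min_{R}\ \max_{\lambda\in[\lambda_{\min}(A),\lambda_{\max}(A)]}R(\lambda)^2\Big)\Norm{q_0-q^*}_A^2 .
$$

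Third—and this is the crux—I would bound the scalar minimax quantity by exhibiting the Chebyshev polynomial of degree $t$, affinely rescaled so $[\lambda_{\min}(A),\lambda_{\max}(A)]$ maps onto $[-1,1]$ and then normalized to equal $1$ at $\lambda=0$. Since $|T_t|\le 1$ on $[-1,1]$, this feasible $R$ satisfies $\max|R| = 1/\big|T_t\big(\tfrac{\kappa_A+1}{\kappa_A-1}\big)\big|$ on the spectral interval. Using $T_t(x)=\tfrac12\big((x+\sqrt{x^2-1})^t+(x+\sqrt{x^2-1})^{-t}\big)$ for $x\ge 1$ together with the identity $\tfrac{\kappa_A+1}{\kappa_A-1}+\sqrt{(\tfrac{\kappa_A+1}{\kappa_A-1})^2-1}=\tfrac{\sqrt{\kappa_A}+1}{\sqrt{\kappa_A}-1}$ gives $T_t\big(\tfrac{\kappa_A+1}{\kappa_A-1}\big)\ge\tfrac12\big(\tfrac{\sqrt{\kappa_A}+1}{\sqrt{\kappa_A}-1}\big)^t$, hence $\Norm{q_t-q^*}_A\le 2\big(\tfrac{\sqrt{\kappa_A}-1}{\sqrt{\kappa_A}+1}\big)^t\Norm{q_0-q^*}_A$. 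Finally, sandwiching $\sqrt{\lambda_{\min}(A)}\,\norm{u}\le\Norm{u}_A\le\sqrt{\lambda_{\max}(A)}\,\norm{u}$ picks up one extra $\sqrt{\kappa_A}$ and yields $\norm{q_T-q^*}\le 2\sqrt{\kappa_A}\big(\tfrac{\sqrt{\kappa_A}-1}{\sqrt{\kappa_A}+1}\big)^T\norm{q_0-q^*}$, which is the claim. I expect the third step to be the main obstacle, since the sharp constant relies on the extremal (least deviation from zero on $[-1,1]$) property of Chebyshev polynomials and on the precise algebraic simplification of $T_t$ evaluated at $(\kappa_A+1)/(\kappa_A-1)$; everything else is bookkeeping. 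The refined rate in the footnote then follows from $\big(\tfrac{\sqrt{\kappa_A}-1}{\sqrt{\kappa_A}+1}\big)^{2T}=\exp\big(2T\log(1-\tfrac{2}{\sqrt{\kappa_A}+1})\big)\le\exp\big(-\tfrac{4T}{\sqrt{\kappa_A}+1}\big)$.
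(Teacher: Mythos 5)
Your proposal is correct and follows exactly the classical Krylov-subspace/Chebyshev-polynomial argument from the cited reference \citep{nocedal2006numerical}; the paper itself gives no proof of this lemma, relying on that citation, so your derivation (including the final $\sqrt{\kappa_A}$ factor from the $A$-norm-to-Euclidean-norm conversion) is precisely the intended one.
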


In the rest of this section we impose the following assumptions on the upper-level function $f$ and the lower-level function $g$.
We then turn to the details of our theoretical analysis:

\begin{assumption}\label{asm:1}
The upper-level function $f(x,y)$ and lower-level function $g(x,y)$ satisfy the following conditions:
\begin{enumerate}[leftmargin=5mm,label=(\roman*)]
\item
Function $g(x,y)$ is three times differentiable and $\mu$-strongly convex with respect to $y$ for any fixed $x$;
\item
Function $f(x,y)$ is twice differentiable and $M$-Lipschitz continuous with respect to $x$ and $y$;
\item
Gradient $\nabla f(x,y)$ and $\nabla g(x,y)$ are $\ell$-Lipschitz continuous with respect to $x$ and $y$;
\item
Jacobian $\nabla_{xy}^2 f(x,y)$, $\nabla_{xy}^2 g(x,y)$ and Hessians $\nabla_{xx}^2f(x,y)$,  $\nabla_{yy}^2f(x,y)$, $\nabla_{yy}^2 g(x,y)$ are $\rho$-Lipschitz continuous with respect to $x$ and $y$;
\item
Third-order derivatives $\nabla_{xyx}^3g(x,y),\nabla_{yxy}^3g(x,y)$ and $\nabla_{yyy}^3g(x,y)$ are $\nu$-Lipschitz continuous with respect to $x$ and $y$.
\end{enumerate}
\end{assumption}

These assumptions are standard for the bilevel optimization problem we are studying.
We also introduce an appropriate notion of condition number for the lower-level function $g(x,y)$.

\begin{definition}\label{dfn:1}
Under Assumption~\ref{asm:1}, we refer to $\kappa \triangleq \ell/\mu$ the \emph{condition number} of the lower-level objective $g(x,y)$.
\end{definition}

Leveraging such a notion, we can show that the solution to the lower-level optimization problem $
y^*(x) = \argmin_{y\in\BR^{d_y}}~g(x,y)
$ is $\kappa$-Lipschitz continuous in $x$ under Assumption~\ref{asm:1}, as indicated in the following lemma:

\begin{lemma}
\label{lem:y*-Lip}
Suppose Assumption~\ref{asm:1} holds, then $y^*(x)$ is $\kappa$-Lipschitz continuous, that is, we have $
\norm{y^*(x) - y^*(x')}
\le
\kappa \norm{x - x'}
$ for any $x, x' \in \BR^{d_x}$.
\end{lemma}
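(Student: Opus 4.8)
The plan is to avoid the implicit function theorem and work directly with the first-order optimality conditions of the lower-level problem together with strong convexity. Fix $x, x' \in \BR^{d_x}$ and write $y = y^*(x)$, $y' = y^*(x')$. Since $g(x,\cdot)$ and $g(x',\cdot)$ are $\mu$-strongly convex (Assumption~\ref{asm:1}(i)) and differentiable, the minimizers are characterized by the stationarity conditions $\nabla_y g(x, y) = \vzero$ and $\nabla_y g(x', y') = \vzero$.

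Next I would invoke strong convexity of $g(x,\cdot)$ in the form of the monotonicity inequality $\inner{\nabla_y g(x, y) - \nabla_y g(x, y')}{y - y'} \ge \mu \norm{y - y'}^2$. Using $\nabla_y g(x,y) = \vzero$ on the left side and then substituting $\vzero = \nabla_y g(x', y')$ to rewrite $-\nabla_y g(x,y')$ as $\nabla_y g(x', y') - \nabla_y g(x, y')$, this becomes
\begin{align*}
\mu \norm{y - y'}^2
\le
\inner{\nabla_y g(x', y') - \nabla_y g(x, y')}{y - y'}
\le
\norm{\nabla_y g(x', y') - \nabla_y g(x, y')}\,\norm{y - y'},
\end{align*}
where the last step is Cauchy--Schwarz. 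The $\ell$-Lipschitz continuity of $\nabla g$ in $x$ (Assumption~\ref{asm:1}(iii)) gives $\norm{\nabla_y g(x', y') - \nabla_y g(x, y')} \le \ell \norm{x - x'}$. Combining and dividing by $\norm{y-y'}$ (the case $y = y'$ being trivial) yields $\norm{y - y'} \le (\ell/\mu)\norm{x - x'} = \kappa \norm{x - x'}$, which is the claim with the constant from Definition~\ref{dfn:1}.

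There is no serious obstacle here; the only points requiring a line of care are justifying that the stationarity condition is both necessary and sufficient (which follows from differentiability plus strong convexity, guaranteeing a unique minimizer), and handling the degenerate case $y^*(x) = y^*(x')$ before dividing. An alternative route via the implicit function theorem—differentiating $\nabla_y g(x, y^*(x)) = \vzero$ to get $\nabla y^*(x) = -\big(\nabla^2_{yy} g(x, y^*(x))\big)^{-1}\nabla^2_{yx} g(x, y^*(x))$, bounding its operator norm by $\ell/\mu$ using $\lambda_{\min}\!\big(\nabla^2_{yy}g\big)\ge\mu$ and $\norm{\nabla^2_{yx}g}\le\ell$, and integrating along the segment from $x$ to $x'$—gives the same constant, but the direct argument above is cleaner and needs no extra regularity beyond Assumption~\ref{asm:1}.
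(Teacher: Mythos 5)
Your argument is correct and yields the right constant, but it takes a genuinely different route from the paper. The paper differentiates the stationarity condition $\nabla_y g(x, y^*(x)) = 0$ in $x$, solves for $\partial y^*(x)/\partial x = -\big(\nabla^2_{yy} g(x,y^*(x))\big)^{-1}\nabla^2_{yx} g(x,y^*(x))$, and bounds the operator norm of this Jacobian by $\ell/\mu$ — exactly the "alternative route" you sketch in your closing paragraph. Your main argument instead combines the strong-monotonicity inequality $\inner{\nabla_y g(x,y)-\nabla_y g(x,y')}{y-y'}\ge\mu\norm{y-y'}^2$ with the two stationarity conditions and the $\ell$-Lipschitz continuity of $\nabla g$ in $x$, then applies Cauchy--Schwarz. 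What your version buys is economy of hypotheses: it needs only first-order information (differentiability, strong convexity, gradient Lipschitzness) and never asserts that $y^*$ is differentiable, whereas the paper's computation implicitly relies on twice-differentiability of $g$ and on the differentiability of $x\mapsto y^*(x)$, and then still has to pass from a pointwise Jacobian bound to a global Lipschitz estimate. Your handling of the degenerate case $y=y'$ and your justification that stationarity characterizes the unique minimizer are both appropriate; there is no gap.
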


We also can show that $\Phi(x)$ admits Lipschitz continuous gradients and Lipschitz continuous Hessians, as shown in the following lemmas:
\begin{lemma}\label{lem:gradient_Lip}
Suppose Assumption~\ref{asm:1} holds, then $\Phi(x)$ is $\tilde{L}$-gradient Lipschitz continuous, that is, we have $
\|\nabla \Phi(x) - \nabla \Phi(x')\|
\le
\tilde{L}\|x - x'\|
$ for any $x,x'\in \BR^{d_x}$, where $\tilde{L}=\fO(\kappa^3)$.
\end{lemma}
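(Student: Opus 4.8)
The plan is to differentiate the closed-form hypergradient \eqref{equ:hypergradient} and to control the Lipschitz modulus of each factor along the curve $x \mapsto \big(x, y^*(x)\big)$, and then to combine these estimates through the elementary product rule for bounded Lipschitz maps. Write
\[
\nabla\Phi(x) = \nabla_x f(x,y^*(x)) - A(x)B(x)c(x), \qquad
A(x) \triangleq \nabla_{xy}^2 g(x,y^*(x)), \quad B(x) \triangleq \big(\nabla_{yy}^2 g(x,y^*(x))\big)^{-1}, \quad c(x) \triangleq \nabla_y f(x,y^*(x)).
\]
The key enabling fact throughout is Lemma~\ref{lem:y*-Lip}, which gives $\|y^*(x) - y^*(x')\| \le \kappa\|x-x'\|$ and hence $\|x-x'\| + \|y^*(x)-y^*(x')\| \le (1+\kappa)\|x-x'\|$, so that the Lipschitz constant of any Lipschitz function of the pair $(x,y)$ is inflated by a factor $(1+\kappa)$ when composed with $x \mapsto (x,y^*(x))$.

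First I would record the uniform norm bounds and Lipschitz constants of the three factors, each an immediate consequence of Assumption~\ref{asm:1} together with the composition remark above. The map $x\mapsto \nabla_x f(x,y^*(x))$ and the map $c(\cdot)$ are both $\ell(1+\kappa)$-Lipschitz by the $\ell$-Lipschitzness of $\nabla f$ (item (iii)), and $\|c(x)\|\le M$ by item (ii). We have $\|A(x)\|\le \ell$ again by item (iii), and $A(\cdot)$ is $\rho(1+\kappa)$-Lipschitz by item (iv). We have $\|B(x)\|\le 1/\mu$ by $\mu$-strong convexity (item (i)); for the Lipschitz bound I would invoke the resolvent identity $B(x) - B(x') = B(x)\big(\nabla_{yy}^2 g(x',y^*(x')) - \nabla_{yy}^2 g(x,y^*(x))\big)B(x')$ and item (iv) to get that $B(\cdot)$ is $\big(\rho(1+\kappa)/\mu^2\big)$-Lipschitz.

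Then I would assemble these bounds via the three-term telescoping estimate
\[
\|A(x)B(x)c(x) - A(x')B(x')c(x')\|
\le
\Big(\|B\|\,\|c\|\,L_A + \|A\|\,\|c\|\,L_B + \|A\|\,\|B\|\,L_c\Big)\|x - x'\|,
\]
and add the $\ell(1+\kappa)$-Lipschitz contribution of the $\nabla_x f$ term. Substituting the constants from the previous paragraph yields a valid choice of $\tilde L$ as an explicit polynomial in $\ell, M, \rho, \mu^{-1}$; its leading-order term is $\|A\|\,\|c\|\,L_B = \ell M\rho(1+\kappa)/\mu^2$, which is $\fO(\kappa^3)$ once $\ell$, $M$, $\rho$ are treated as $\fO(1)$ and $1/\mu = \fO(\kappa)$, while every other term is $\fO(\kappa^2)$ or smaller, so $\tilde L = \fO(\kappa^3)$.

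The computation is essentially bookkeeping, so I do not expect a genuine obstacle; the one point requiring care — and the place where the dominant $\kappa^3$ (rather than a naive $\kappa^2$) arises — is the inverse-Hessian factor $B(\cdot)$, where the resolvent identity forces two copies of the bound $\|B\|\le 1/\mu$ and hence the $\mu^{-2}$ that drives the leading term. A minor secondary point is that the $\kappa$-Lipschitzness of $y^*$ from Lemma~\ref{lem:y*-Lip} must be threaded through every factor, which is exactly what produces the ubiquitous $(1+\kappa)$ multipliers and supplies the extra power of $\kappa$ in that leading term.
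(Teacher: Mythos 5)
Your proposal is correct and follows essentially the same route as the paper's proof: the same four-factor decomposition of the hypergradient, the same $(1+\kappa)$ inflation from the $\kappa$-Lipschitzness of $y^*$, the same resolvent identity for the inverse-Hessian factor, and the same telescoping product bound, yielding the same leading term $\rho\ell^2 M/\mu^3 = \fO(\kappa^3)$. No gaps.
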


\begin{lemma}\label{lem:Hessian_Lip}
Suppose Assumption~\ref{asm:1} holds, then $\Phi(x)$ is $\tilde{\rho}$-Hessian Lipschitz continuous, that is, $
\|\nabla^2\Phi(x) - \nabla^2\Phi(x')\|
\le
\tilde{\rho}\|x - x'\|
$ for any $x,x'\in \BR^{d_x}$, where $\tilde{\rho}=\fO(\kappa^5)$.
\end{lemma}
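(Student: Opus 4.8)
The plan is to differentiate the hypergradient formula~\eqref{equ:hypergradient} once more in $x$ to obtain an explicit expression for $\nabla^2\Phi(x)$, and then to bound $\|\nabla^2\Phi(x)-\nabla^2\Phi(x')\|$ by estimating the difference of that expression at $x$ and $x'$ term by term. Write $G(x)\triangleq\nabla^2_{yy}g(x,y^*(x))$; the implicit function theorem gives $\nabla y^*(x)=-G(x)^{-1}\nabla^2_{yx}g(x,y^*(x))$, and differentiating
\[
\nabla\Phi(x)=\nabla_x f(x,y^*(x))-\nabla^2_{xy}g(x,y^*(x))\,G(x)^{-1}\nabla_y f(x,y^*(x))
\]
by the chain and product rules expresses $\nabla^2\Phi(x)$ as a finite sum of products, each factor of which is a first- or second-order derivative of $f$, a second- or third-order derivative of $g$, the inverse Hessian $G(x)^{-1}$, or the Jacobian $\nabla y^*(x)$, all evaluated at $(x,y^*(x))$. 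Differentiating $G(x)^{-1}$ produces the factor $-G(x)^{-1}\big(\tfrac{d}{dx}G(x)\big)G(x)^{-1}$, i.e.\ two extra copies of the inverse Hessian and one third-order derivative of $g$; this is the term that will dominate the count.

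Two sets of ingredients are needed. First, uniform bounds, all immediate from Assumption~\ref{asm:1}: $\|\nabla_x f\|,\|\nabla_y f\|\le M$, $\|\nabla^2 f\|\le\ell$, $\|\nabla^2 g\|\le\ell$, $\|\nabla^3 g\|\le\rho$, $\|G(x)^{-1}\|\le 1/\mu$, and $\|\nabla y^*(x)\|\le\ell/\mu=\kappa$. Second, Lipschitz estimates in $x$. Because $y^*(\cdot)$ is $\kappa$-Lipschitz (Lemma~\ref{lem:y*-Lip}), composing any derivative of $f$ or $g$ with $x\mapsto(x,y^*(x))$ multiplies its Lipschitz modulus by at most $1+\kappa$; hence (treating $M,\ell,\rho,\nu$ as absolute constants, so $1/\mu\asymp\kappa$) the maps $x\mapsto\nabla f$, $\nabla^2 f$, $\nabla^2 g$, $\nabla^3 g$ along the graph of $y^*$ are all $\fO(\kappa)$-Lipschitz. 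Combining this with the resolvent identity $\|A^{-1}-B^{-1}\|\le\|A^{-1}\|\,\|B^{-1}\|\,\|A-B\|$ shows $x\mapsto G(x)^{-1}$ is $\fO(\kappa^3)$-Lipschitz, and then from $\nabla y^*(x)=-G(x)^{-1}\nabla^2_{yx}g(x,y^*(x))$ and the bilinear estimate $\mathrm{Lip}(AB)\le\|A\|_\infty\mathrm{Lip}(B)+\mathrm{Lip}(A)\|B\|_\infty$ one gets that $x\mapsto\nabla y^*(x)$ is $\fO(\kappa^3)$-Lipschitz. (These are exactly the estimates that, applied to $\nabla\Phi$ itself, reproduce the $\tilde L=\fO(\kappa^3)$ bound of Lemma~\ref{lem:gradient_Lip}.)

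With these in hand the rest is systematic bookkeeping: expand $\nabla^2\Phi(x)-\nabla^2\Phi(x')$, split each summand with the telescoping identity $AB-A'B'=A(B-B')+(A-A')B'$ (and its three-factor analogue), insert the bounds and Lipschitz moduli above, and collect the largest power of $\kappa$. Every summand coming from $\tfrac{d}{dx}\nabla_x f$, from $\big(\tfrac{d}{dx}\nabla^2_{xy}g\big)G^{-1}\nabla_y f$, and from $\nabla^2_{xy}g\,G^{-1}\big(\tfrac{d}{dx}\nabla_y f\big)$ turns out to be $\fO(\kappa^4)$-Lipschitz or better. The single worst summand is $\nabla^2_{xy}g\cdot\big(\tfrac{d}{dx}G^{-1}\big)\cdot\nabla_y f$: bounding its Lipschitz modulus uses $\tfrac{d}{dx}G^{-1}=-G^{-1}(\tfrac{d}{dx}G)G^{-1}$, a product of three matrix-valued maps each of norm $\fO(\kappa)$ and Lipschitz modulus $\fO(\kappa^3)$ (the $\fO(\kappa^3)$ modulus of $\tfrac{d}{dx}G$ is where Assumption~\ref{asm:1}(iv)--(v) enters, via the Lipschitzness of $\nabla y^*$ and of $\nabla^3 g$), which yields $\fO(\kappa\cdot\kappa\cdot\kappa^3)=\fO(\kappa^5)$. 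Hence $\|\nabla^2\Phi(x)-\nabla^2\Phi(x')\|\le\fO(\kappa^5)\|x-x'\|$, i.e.\ $\tilde\rho=\fO(\kappa^5)$.

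I expect the main obstacle to be organizational rather than conceptual: the expanded Hessian has many terms, and one must carefully verify both that $\nabla y^*(\cdot)$ is itself globally and uniformly Lipschitz---which needs the implicit-function formula differentiated once together with the $\rho$-Lipschitzness of the second-order derivatives of $g$---and that every term containing a third-order derivative of $g$ is Lipschitz, which is precisely what the $\nu$-Lipschitz assumption in Assumption~\ref{asm:1}(v) supplies. No individual estimate is deep; the content lies in confirming that the finitely many products never generate a power of $\kappa$ beyond the fifth.
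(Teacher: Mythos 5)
Your proposal is correct in substance. Note first that the paper does not actually prove this lemma: in Appendix~\ref{sec:basic lemmas} it simply cites Lemma~3.4 of \citet{huang2022efficiently} and reproduces the explicit constant $\tilde\rho$. Your direct argument---differentiate the hypergradient formula once more, record a uniform norm bound and a Lipschitz modulus for each factor ($\|G^{-1}\|\le 1/\mu=\fO(\kappa)$ with modulus $\fO(\kappa^3)$ via the resolvent identity, $\|\nabla y^*\|\le\kappa$ with modulus $\fO(\kappa^3)$, derivatives of $f,g$ composed with $x\mapsto(x,y^*(x))$ picking up a factor $(1+\kappa)$), and then apply the telescoping/product-rule Lipschitz estimate term by term---is the standard way this is established, and your power counting is consistent with the cited constant: the dominant contributions there are $M\rho^2\mu^{-2}(1+\ell/\mu)^3$ and $2M\ell\rho^2\mu^{-3}(1+\ell/\mu)^2$, both $\fO(\kappa^5)$ and both arising exactly from the $\nabla^2_{xy}g\,\bigl(\tfrac{d}{dx}G^{-1}\bigr)\nabla_y f$ summand you single out, while the $\nu$-dependent terms carry only $(1+\ell/\mu)$ and top out at $\fO(\kappa^4)$, matching your claim that Assumption~\ref{asm:1}(v) enters only in subdominant terms. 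The one thing your write-up leaves implicit is the full expansion of $\nabla^2\Phi$ (the "systematic bookkeeping"), but every estimate you would need in that expansion is already supplied by the bounds you list, so nothing would fail in carrying it out.
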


The detailed form of $\tilde L $ and $\tilde \rho$ can be found in Appendix~\ref{sec:basic lemmas}.
We give the formal definition of an $\epsilon$-first-order stationary point as well as an $(\epsilon,\tau)$-second-order stationary point, as follows:

\begin{definition}[Approximate First-Order Stationary Point]\label{dfn:first order}
Under Assumption~\ref{asm:1}, we call $x$ an \emph{$\epsilon$-first-order stationary point} of $\Phi(x)$ if $\norm{\nabla\Phi(x)} \le \epsilon$.
\end{definition}

\begin{definition}[Approximate Second-Order Stationary Point]\label{dfn:second order}
Under Assumption~\ref{asm:1}, we call $x$ an \emph{$(\epsilon,\tau)$-second-order stationary point} of $\Phi(x)$ if $\norm{\nabla\Phi(x)}\le \epsilon
$ and
$\lambda_{\min}(\nabla^2\Phi(x))\ge -\tau$.
\end{definition}
We remark that these concepts are commonly used in the nonconvex optimization literature~\citep{nesterov2006cubic}.
The approximate second-order stationary point is sometimes referred to as an “approximate local minimizer.”

With all these preliminaries at hand, we are ready to proceed with the (perturbed) restarted accelerated hypergradient descent method.

\begin{algorithm}[t]
\caption{(Perturbed) Restarted Accelerated HyperGradient Descent, (\texttt{P})\texttt{RAHGD}} \label{alg:AHGD}
\begin{algorithmic}[1]
\STATE \textbf{Input:}
initial vector $x_{0,0}$;
step-size $\eta > 0$;
momentum parameter $\theta \in (0,1)$;
parameters $\alpha > 0,\beta \in (0,1)$;
parameter $\{T_{t,k}\}$ of AGD;
parameter $\{T_{t,k}'\}$ of CG;
iteration threshold $K\ge 1$;
parameter $B$ for triggering restarting;
perturbation radius $r > 0$;
option $\mbox{\texttt{Perturbation}}\in\{0,1\}$
\\[0.1cm]
\STATE
$
k\leftarrow 0,\ 
t\leftarrow 0,\ 
x_{0,-1}\leftarrow x_{0,0}
,\
y_{0,-1}\leftarrow{\rm AGD}\left(g(x_{1,-1},\,\cdot\,), 0, T_{0,-1}, \alpha, \beta\right)
,\ 
v_{0,-1} \leftarrow y_{0,-1}$\\[0.1cm]
\STATE \textbf{while} $k<K$ \\[0.1cm]
\STATE\quad $w_{t,k}\leftarrow x_{t,k}+(1-\theta)(x_{t,k}-x_{t,k-1})$ \\[0.1cm]
\STATE\quad $y_{t,k} \leftarrow {\rm AGD} (g(w_{t,k},\,\cdot\,), y_{t,k-1}, T_{t,k}, \alpha, \beta)$ \\[0.1cm]\label{line:agd}
\STATE\quad $v_{t,k} \leftarrow {\rm CG}(\nabla^2_{yy}g(w_{t,k},y_{t,k}), \nabla_y f(w_{t,k},y_{t,k})$, $T'_{t,k},{v_{t,k-1}})$ \\[0.1cm]
\STATE\quad $u_{t,k} \leftarrow \nabla_x f(w_{t,k}, y_{t,k}) - \nabla^2_{xy} g(w_{t,k}, y_{t,k}) v_{t,k}$ \\[0.1cm]
\STATE\quad $x_{t,k+1}\leftarrow w_{t,k}-\eta u_{t,k}$ \\[0.1cm]
\STATE\quad $k\leftarrow k+1$ \\[0.1cm]
\STATE\quad \textbf{if} $k\sum_{i=0}^{k-1}\|x_{t,i+1}-x_{t,i}\|^2> B^2$ \\[0.1cm]
\STATE\qquad $v_{t+1,-1} \leftarrow v_{t,k}$\\[0.1cm]
\STATE\quad\quad 
$x_{t+1,0}\leftarrow \left\{
\begin{array}{ll}
   x_{t,k},
   & \text{if~~ \texttt{Perturbation}$\,=0$}
   \\
   x_{t,k} + \xi ~~\text{with}~~ \xi \sim \text{Unif}(\sB(r)),
   & \text{if~~ \texttt{Perturbation}$\,=1$}
\end{array}
\right.$
\\[0.1cm]\label{line:restart}
\STATE\quad\quad 
$x_{t+1,-1}\leftarrow x_{t+1,0}$
\\[0.1cm]
\STATE \qquad $k\leftarrow 0$,  $t \leftarrow t+1$
\\[0.1cm]
\STATE \qquad $y_{t,-1}\leftarrow{\rm AGD} (g(x_{t,-1},\,\cdot\,), 0, T_{t,-1}, \alpha, \beta)$
\\[0.1cm]
\STATE\quad \textbf{end if}
\\[0.1cm]
\STATE\textbf{end while}
\\[0.1cm]
\STATE
$K_0\leftarrow\argmin_{\lfloor \frac{K}{2}\rfloor\leq k\leq K-1}
\norm{x_{t,k+1}-x_{t,k}}$
\\[0.1cm]
\STATE \textbf{Output:}
$\hat w\leftarrow\frac{1}{K_0+1}\sum_{k=0}^{K_0}w_{t,k}$
\end{algorithmic}
\end{algorithm}

\pb\section{Restarted Accelerated HyperGradient Descent Algorithm}\label{sec_convergence}
In this section, we present our \emph{restarted accelerated hypergradient descent} (\texttt{RAHGD}) algorithm and provide corresponding query complexity upper bound results.
We present the details of \texttt{RAHGD} in Algorithm~\ref{alg:AHGD}, which has a nested loop structure.
The outer loop, indexed by $k$, uses the accelerated gradient descent method to find the solver of~(\ref{bilevel_a}).
The AGD step in Line~\ref{line:agd} is used to find the inexact solver of~(\ref{bilevel_b}).
The CG step is added to compute the Hessian-vector product, as shown in~(\ref{equ:hypergradient}).
We note that the iteration numbers of the AGD and CG steps play an important role in the convergence analysis of Algorithm~\ref{alg:AHGD}; moreover, 
at the end of this section we will show that the total iteration number of AGD and CG can be bounded sharply.
Finally, note that there is a restarting step in Line~\ref{line:restart} where the option~\texttt{Perturbation} is taken as $\,=0$.

We let subscript $t$ index the times of restarting.  We note that the subscript $t$ of epoch number is added in Algorithm~\ref{alg:AHGD} purely for the sake of an easier convergence analysis.
The incurred storage of iterations across all epochs can be avoided when implementing Algorithm~\ref{alg:AHGD} in practice.

In accelerated nonconvex optimization, a straightforward application of AGD cannot ensure consistent decrements of the objective function. Inspired by the work of~\citet{li2022restarted}, we add a restarting step in Line~\ref{line:restart}---we define $\mathcal{K}$ to be the iteration number when the “if condition” triggers, and hence the iterates from $k=0$ to $k = \fK$ constructs one single epoch, where $
\fK
=
\min_k\left\{
k\ge 1:\, k\sum_{t=0}^{k-1}\|x_{t+1}-x_t\|_2^2 > B^2
\right\}
$.
Then we can have the objective function consistently decrease with respect to each epoch when we run Algorithm~\ref{alg:AHGD}.
We provide the convergence results for \texttt{RAHGD} in the rest of this section. 

Denote $
v_k^*
=
\big(\nabla^2_{yy} g(w_{k}, y_{k})\big)^{-1} \nabla_y f(w_{k}, y_{k})
$.
\vspace{0.1cm}
Due to the bilevel optimization problem we are 
considering the following conditions on the inexact gradient.
Recall that the overall objective function $\Phi(x)$ is $\tilde L$-gradient Lipschitz continuous, and both the upper-level function $f(x,y)$ and the lower-level function $g(x,y)$ are $\ell$-gradient Lipschitz continuous:

\begin{condition}\label{con:4.1}
Let $w_{-1} = x_{-1}$.
Then for some $\sigma > 0$, we assume that the estimators $y_k \in \BR^{d_y}$ and $v_k \in \BR^{d_y}$ satisfy the conditions
\begin{equation}
\label{equ:cond1}
\|y_k - y^*(w_k)\|_2\le \frac{\sigma}{2\tilde{L}}
,\qquad
\text{for each $k = -1,0,1,2,\dots$}
\end{equation}
and
\begin{equation}
\label{equ:cond2}
\|v_k - v_k^*\|\le \frac{\sigma}{2\ell}
,\qquad
\text{for each $k = 0,1,2,\dots$}
\end{equation}
\end{condition}

\begin{remark}\label{remark1}
We will show at the end of this section that Condition~\ref{con:4.1} is guaranteed to hold after running AGD and CG for a sufficient number of iterates.
\end{remark}

Under Condition~\ref{con:4.1}, the bias of $\hat{\nabla}\Phi(x_k)$ defined in \eqref{equ:inexact hypergradient} can be bounded as shown in the following lemma:

\begin{lemma}[Inexact gradients]\label{lem:bias of gradient}
Suppose Assumption~\ref{asm:1} and Condition~\ref{con:4.1} hold, then we have $
\| \nabla\Phi(w_k)- \hat{\nabla}\Phi(w_k) \|_2 \le \sigma
$.
\end{lemma}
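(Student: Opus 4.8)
\textbf{Proof proposal for Lemma~\ref{lem:bias of gradient}.}

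The plan is to directly compare the exact hypergradient $\nabla\Phi(w_k)$ (via the closed-form expression \eqref{equ:hypergradient} evaluated at $x = w_k$, using $y^*(w_k)$) with the inexact hypergradient $\hat{\nabla}\Phi(w_k) = \nabla_x f(w_k,y_k) - \nabla^2_{xy}g(w_k,y_k)v_k$, and bound the difference term by term using the Lipschitz conditions in Assumption~\ref{asm:1} together with the two accuracy bounds in Condition~\ref{con:4.1}. First I would introduce an intermediate ``semi-exact'' hypergradient in which $y^*(w_k)$ is replaced by $y_k$ but the matrix inverse is kept exact, i.e.
\[
\bar{\nabla}\Phi(w_k) \triangleq \nabla_x f(w_k,y_k) - \nabla^2_{xy}g(w_k,y_k)\big(\nabla^2_{yy}g(w_k,y_k)\big)^{-1}\nabla_y f(w_k,y_k) = \nabla_x f(w_k,y_k) - \nabla^2_{xy}g(w_k,y_k)\,v_k^*,
\]
and then split via the triangle inequality:
\[
\|\nabla\Phi(w_k) - \hat{\nabla}\Phi(w_k)\|_2 \le \underbrace{\|\nabla\Phi(w_k) - \bar{\nabla}\Phi(w_k)\|_2}_{\text{(I): error from }y_k\approx y^*(w_k)} + \underbrace{\|\bar{\nabla}\Phi(w_k) - \hat{\nabla}\Phi(w_k)\|_2}_{\text{(II): error from }v_k\approx v_k^*}.
\]

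For term (II), since $\bar{\nabla}\Phi(w_k) - \hat{\nabla}\Phi(w_k) = -\nabla^2_{xy}g(w_k,y_k)(v_k^* - v_k)$, I would use that $\|\nabla^2_{xy}g(w_k,y_k)\| \le \ell$ (Jacobian of the $\ell$-Lipschitz gradient $\nabla g$) together with the CG accuracy bound $\|v_k - v_k^*\| \le \sigma/(2\ell)$ from \eqref{equ:cond2}, giving $\text{(II)} \le \ell \cdot \sigma/(2\ell) = \sigma/2$. For term (I), this is exactly the statement that the map $x \mapsto$ (hypergradient formula with $y^*(x)$ plugged in) is, as a function of the $y$-argument, Lipschitz with a constant of order $\tilde L$ — indeed the derivation of Lemma~\ref{lem:gradient_Lip} (the $\tilde L = \fO(\kappa^3)$ gradient-Lipschitz bound for $\Phi$) already controls how the right-hand side of \eqref{equ:hypergradient} varies, and the relevant Lipschitz dependence on the $y$-slot is dominated by $\tilde L$; combined with $\|y_k - y^*(w_k)\| \le \sigma/(2\tilde L)$ from \eqref{equ:cond1} this yields $\text{(I)} \le \tilde L \cdot \sigma/(2\tilde L) = \sigma/2$. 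Adding the two bounds gives $\|\nabla\Phi(w_k) - \hat{\nabla}\Phi(w_k)\|_2 \le \sigma$, as claimed. The boundary case $k=-1$ with $w_{-1}=x_{-1}$ is handled identically.

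The main obstacle is term (I): one needs a clean bound on how sensitively the exact-inverse expression $\nabla_x f(x,y) - \nabla^2_{xy}g(x,y)(\nabla^2_{yy}g(x,y))^{-1}\nabla_y f(x,y)$ depends on its $y$-argument near $y = y^*(x)$. This requires estimating the perturbation of a matrix inverse — using $\|(\nabla^2_{yy}g)^{-1}\| \le 1/\mu$, the $\rho$-Lipschitz continuity of $\nabla^2_{yy}g$ and $\nabla^2_{xy}g$, the $\ell$-Lipschitz continuity of $\nabla_y f$, the identity $A^{-1}-B^{-1} = A^{-1}(B-A)B^{-1}$, and the $M$-boundedness of $\nabla_y f$ — and bundling all resulting factors of $\ell,\mu,\rho,M$ into a constant that is no larger than the $\tilde L$ appearing in Lemma~\ref{lem:gradient_Lip}. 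This is a routine but slightly tedious matrix-perturbation computation; since Lemma~\ref{lem:gradient_Lip} (whose proof already performs essentially this calculation along the curve $x\mapsto(x,y^*(x))$) is available, I would invoke its intermediate estimates directly rather than redoing them, and simply note that the partial-Lipschitz constant in the $y$-slot is bounded by $\tilde L$.
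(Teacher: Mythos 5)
Your proposal is correct and follows essentially the same route as the paper: the paper introduces exactly the same intermediate quantity $\bar{\nabla}\Phi(w_k)$ (exact inverse, approximate $y_k$), splits by the triangle inequality, and bounds the two pieces by $\tilde L\|y_k-y^*(w_k)\|\le\sigma/2$ and $\ell\|v_k-v_k^*\|\le\sigma/2$ respectively. Your extra care in noting that the $y$-slot partial Lipschitz constant of the hypergradient formula is dominated by $\tilde L$ is a point the paper's one-line invocation of Lemma~\ref{lem:gradient_Lip} leaves implicit, but the substance is identical.
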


In the following theorem we show that the iteration complexity in the outer loop is bounded.

\begin{theorem}[\texttt{RAHGD} finding $\epsilon$-FOSP]\label{thm:one-oder}
Suppose that Assumptions~\ref{asm:1} and Condition~\ref{con:4.1} hold.
Let
\begin{align*}
\eta = \frac{1}{4\tilde L}
,\quad
B = \sqrt{\frac{\epsilon}{\tilde\rho}}
,\quad
\theta = 4(\tilde\rho\epsilon\eta^2)^{1/4}
,\quad
K = \frac{1}{\theta}
,\quad
\alpha = \frac{1}{\ell}
,\quad
\beta=\frac{\sqrt{\kappa} -1}{\sqrt{\kappa}+1}
,\quad
\sigma = \epsilon^2
.
\end{align*}
and assume that $\epsilon\le\frac{\tilde L^2}{\tilde \rho}$.
Denote $\Delta = \Phi(x_{\rm{int}}) - \min_x~\Phi(x)$.
Then \emph{\texttt{\texttt{RAHGD}}} in Algorithm~\ref{alg:AHGD} terminates within $
\fO(\Delta\tilde L^{0.5}\tilde\rho^{0.25}\epsilon^{-1.75})
$ iterates, outputting $\hat{w}$ satisfying $
\norm{\nabla \Phi(\hat{w})}\le 83\epsilon
$.
\end{theorem}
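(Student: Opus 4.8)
\textbf{Proof proposal for Theorem~\ref{thm:one-oder}.}

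The plan is to adapt the restart-based analysis of accelerated gradient descent for nonconvex objectives (in the spirit of \citet{li2022restarted}) to the inexact-hypergradient setting. The central object is a suitable Lyapunov function that, within a single epoch $t$, is monotonically decreasing as long as the restart condition has not yet triggered. Concretely, I would work with a potential of the form $\mathcal{E}_{t,k} = \Phi(x_{t,k}) + \frac{c}{\eta}\|x_{t,k} - x_{t,k-1}\|^2$ for an appropriate absolute constant $c$, and show that one step of the AGD-type update $x_{t,k+1} = w_{t,k} - \eta u_{t,k}$ yields a decrease controlled by $\|x_{t,k+1}-x_{t,k}\|^2$ up to an additive error coming from the gradient inexactness $\sigma$. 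Here the key inputs are: (i) Lemma~\ref{lem:gradient_Lip} (the $\tilde L$-smoothness of $\Phi$, which is why $\eta = 1/(4\tilde L)$ is a valid step size), and (ii) Lemma~\ref{lem:bias of gradient}, which under Condition~\ref{con:4.1} guarantees $\|\nabla\Phi(w_k) - u_k\| = \|\nabla\Phi(w_k)-\hat\nabla\Phi(w_k)\| \le \sigma$ with $\sigma = \epsilon^2$ chosen small enough that the accumulated bias over an epoch is negligible against $\epsilon^2 \cdot (\text{number of iterates})$.

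The next step is the standard dichotomy. Either (a) the restart condition $k\sum_{i=0}^{k-1}\|x_{t,i+1}-x_{t,i}\|^2 > B^2$ fires at some $\mathcal{K} \le K = 1/\theta$, in which case the per-epoch telescoped decrease of $\mathcal{E}$ is at least of order $B^2/(\eta K) = \Theta(\tilde\rho^{1/2}\epsilon^{3/2}/\tilde L \cdot \tilde L) \sim \sqrt{\tilde\rho}\,\epsilon^{3/2}$ type (using $B = \sqrt{\epsilon/\tilde\rho}$, $\theta = 4(\tilde\rho\epsilon\eta^2)^{1/4}$) minus the $O(K\sigma^2/\tilde L)$ bias, so that the objective drops by a fixed amount per epoch; or (b) the restart never fires within $K$ iterates, in which case $\sum_{k=0}^{K-1}\|x_{t,k+1}-x_{t,k}\|^2 \le B^2/K$ is small, the momentum terms $x_{t,k}-x_{t,k-1}$ are uniformly small, and a Hessian-Lipschitz / averaging argument (this is where the output $\hat w = \frac{1}{K_0+1}\sum_{k=0}^{K_0}w_{t,k}$ and the index selection $K_0 = \argmin_{\lfloor K/2\rfloor \le k \le K-1}\|x_{t,k+1}-x_{t,k}\|$ enter) shows $\|\nabla\Phi(\hat w)\|$ is $O(\epsilon)$; tracking constants gives the stated $83\epsilon$. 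Since case (a) can occur at most $\Delta / (\text{per-epoch decrease})$ times and each epoch runs at most $K = 1/\theta$ outer iterates, the total outer-iteration count is $O(\Delta \cdot \theta^{-1} \cdot \sqrt{\tilde\rho}\,\epsilon^{-3/2}) = O(\Delta\, \tilde L^{0.5}\tilde\rho^{0.25}\epsilon^{-1.75})$ after substituting $\theta = 4(\tilde\rho\epsilon\eta^{2})^{1/4} = \Theta((\tilde\rho\epsilon)^{1/4}\tilde L^{-1/2})$; the assumption $\epsilon \le \tilde L^2/\tilde\rho$ guarantees $\theta \in (0,1)$ and $K \ge 1$ so the scheme is well-defined.

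I expect the main obstacle to be the bookkeeping in case (b): converting "all consecutive differences within the second half of the epoch are small" into a genuine bound on the \emph{gradient} $\|\nabla\Phi(\hat w)\|$ rather than on an inexact surrogate. This requires combining three approximations carefully — the gradient-map/momentum relation of AGD (relating $\nabla\Phi(w_{t,k})$ to $(w_{t,k}-x_{t,k+1})/\eta$ up to the bias $\sigma$), the Hessian-Lipschitz bound from Lemma~\ref{lem:Hessian_Lip} to control the difference between $\nabla\Phi$ evaluated at the average $\hat w$ and the average of the $\nabla\Phi(w_{t,k})$, and the bias accumulation — and it is where all the loose numerical constants pile up to produce the constant $83$. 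A secondary, more mechanical obstacle is verifying that Condition~\ref{con:4.1} with $\sigma = \epsilon^2$ is consistent with the Lyapunov decrease (i.e. the bias terms $O(K\sigma^2/\tilde L)$ and $O(\sigma B)$ really are lower-order), which is a matter of plugging in the parameter choices; the genuinely novel part relative to \citet{li2022restarted} is precisely that one must carry this $\sigma$-error through every inequality, and then defer to the end of the section the separate argument (sketched in Remark~\ref{remark1}) that a logarithmic number of AGD and CG steps per outer iterate suffices to enforce Condition~\ref{con:4.1}, which is what ultimately yields the oracle-complexity claims in Table~\ref{table:comparision_fir}.
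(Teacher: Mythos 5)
Your high-level bookkeeping is broadly right — the role of $\sigma=\epsilon^2$ via Lemma~\ref{lem:bias of gradient}, the step size $\eta=1/(4\tilde L)$ justified by Lemma~\ref{lem:gradient_Lip}, the averaging/$K_0$ argument in the terminal epoch, and deferring Condition~\ref{con:4.1} to Proposition~\ref{thm:T_bound1} all match the paper. But the engine you propose for the per-epoch decrease is not the one the paper uses, and it cannot work. You posit a potential $\mathcal{E}_k=\Phi(x_k)+\frac{c}{\eta}\|x_k-x_{k-1}\|^2$ that decreases \emph{monotonically} by $\Omega\big(\frac{1}{\eta}\|x_{k+1}-x_k\|^2\big)$ at every step. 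No such monotone decrease holds for Nesterov-type extrapolation with momentum $1-\theta$, $\theta=\Theta\big((\tilde\rho\epsilon)^{1/4}/\sqrt{\tilde L}\big)$, on a nonconvex objective: near a strict saddle the iterates build up velocity along negative-curvature directions and the potential can increase. Worse, if your claim were true, the restart condition $\fK\sum_i\|x_{i+1}-x_i\|^2>B^2$ would force a per-epoch decrease of $\Omega(B^2/(\eta K))$, and with the stated parameters this yields a total outer-iteration count of $O(\Delta\sqrt{\tilde\rho}\,\epsilon^{-3/2})$; in the single-level specialization ($\tilde L,\tilde\rho=O(1)$) that is $O(\epsilon^{-1.5})$ gradient queries, contradicting the $\Omega(\epsilon^{-12/7})$ first-order lower bound of \citet{carmon2020lower} that the paper itself cites. (Your own summary silently switches to the paper's epoch count $\Delta\sqrt{\tilde\rho}\epsilon^{-3/2}$ times $K$ iterations per epoch, which is inconsistent with the per-epoch decrease your Lyapunov claim would deliver.)

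What the paper actually does is dichotomize not on ``restart fires vs.\ never fires'' but on the size of $\|\nabla\Phi(w_{\fK-1})\|$ at the moment the restart fires. If $\|\nabla\Phi(w_{\fK-1})\|>B/\eta$, a direct descent-lemma telescoping (Lemma~\ref{lem:3.2}) gives $\Phi(x_\fK)-\Phi(x_0)\le\frac{B^2}{8\eta}-\frac{3\eta}{8}\|\nabla\Phi(w_{\fK-1})\|^2+\sigma B+\frac{5\eta\sigma^2\fK}{8}\le-\frac{B^2}{4\eta}+\dots$; note the decrease comes from the large gradient term dominating a \emph{positive} accumulated term $\frac{1}{8\eta}\sum_k\|x_{k+1}-x_k\|^2$, not from step-wise monotonicity. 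If instead the gradient at $w_{\fK-1}$ is small but the restart still fired (the saddle-point regime, exactly where a Lyapunov argument breaks), the paper approximates $\Phi$ by the quadratic $\phi$ built from $\mH=\nabla^2\Phi(x_0)$, eigendecomposes, and treats the directions with $\lambda_j\ge-\theta/\eta$ and $\lambda_j<-\theta/\eta$ separately (Lemmas~\ref{lem:3.3} and~\ref{lem:3.4}); only this negative-curvature analysis converts ``the iterates moved a total of at least $B^2/\fK$'' into the guaranteed decrease $\Phi(x_\fK)-\Phi(x_0)\le-\epsilon^{3/2}/\sqrt{\tilde\rho}$ of Lemma~\ref{lem:3.5}, which is what produces the $\Delta\sqrt{\tilde\rho}\epsilon^{-3/2}$ epoch count and hence the $\epsilon^{-1.75}$ rate. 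This case is entirely absent from your plan, and it is the crux of the theorem; the inexactness propagation you correctly flag as a concern is, by comparison, the routine part.
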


Theorem~\ref{thm:one-oder} says that Algorithm~\ref{alg:AHGD} can find an $\epsilon$-first-order stationary point with $\fO(\kappa^{2.75}\epsilon^{-1.75})$ iterations in the outer loop.
The following result indicates that Condition~\ref{con:4.1} holds if we run AGD and CG for a sufficient number of iterations.
In addition, the total number of iterations in one epoch is at most $\fO(\kappa^{0.5}\fK\log(1/\epsilon))$:

\begin{proposition}\label{thm:T_bound1}
Suppose Assumption~\ref{asm:1} holds.
In the $t$-th epoch, we set the inner loop iteration number $T_{t,k}$ and the CG iteration number $T_{t,k}'$.
We run Algorithm~\ref{alg:AHGD} with the parameter chosen in Theorem~\ref{thm:one-oder}.
Then all $y_{t,k}$ and $v_{t,k}$ satisfy Condition~\ref{con:4.1}.
For each $t$, we also have the following bounds for the inner loops 
$
\sum_{k=-1}^{\fK-1}T_{t,k}
\le
\fO(\kappa^{0.5}\fK\log(1/\epsilon))
$
\text{and}
$
\sum_{k=0}^{\fK-1}T_{t,k}'
\le
\fO(\kappa^{0.5}\fK\log(1/\epsilon)).
$
\end{proposition}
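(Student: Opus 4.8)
\textbf{Proof proposal for Proposition~\ref{thm:T_bound1}.}

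The plan is to proceed in two stages: first, choose explicit iteration counts $T_{t,k}$ and $T_{t,k}'$ that force Condition~\ref{con:4.1} to hold, and second, bound the telescoped sums of these counts over one epoch. For the AGD step, Lemma~\ref{lem:AGD} applied to the $\mu$-strongly convex, $\ell$-smooth function $g(w_{t,k},\cdot)$ gives geometric contraction with rate $(1-1/\sqrt\kappa)$, so after $T_{t,k}$ iterations the error shrinks by a factor $(1+\kappa)(1-1/\sqrt\kappa)^{T_{t,k}}$ relative to the initialization error $\|y_{t,k-1}-y^*(w_{t,k})\|^2$. The crucial observation is that the initial error can be controlled by the drift of the outer iterates: using Lemma~\ref{lem:y*-Lip} ($y^*$ is $\kappa$-Lipschitz) together with the triangle inequality and the fact that $y_{t,k-1}$ already satisfies \eqref{equ:cond1} from the previous step, one gets $\|y_{t,k-1}-y^*(w_{t,k})\| \le \sigma/(2\tilde L) + \kappa\|w_{t,k}-w_{t,k-1}\|$. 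The displacement $\|w_{t,k}-w_{t,k-1}\|$ is itself bounded via the update rules (the $w$-iterate is an explicit combination of $x$-iterates, each $x$-step being $-\eta u_{t,k}$ with $u_{t,k}$ an inexact gradient whose norm is controlled by smoothness of $\Phi$ and the restart condition $k\sum_i\|x_{t,i+1}-x_{t,i}\|^2\le B^2$). So I would first establish a uniform bound $\|w_{t,k}-w_{t,k-1}\| = \fO(\mathrm{poly}(\kappa)\cdot B/\sqrt{\fK}\,)$ or similar, then conclude that taking $T_{t,k} = \fO(\sqrt\kappa\log(1/\epsilon))$ plus a correction proportional to $\log\|w_{t,k}-w_{t,k-1}\|$ suffices for \eqref{equ:cond1}; the analogous argument for CG uses Lemma~\ref{lem:CG} (the matrix $\nabla^2_{yy}g$ has condition number $\kappa$) and warm-starting $v_{t,k}$ from $v_{t,k-1}$, noting $v_k^*$ is Lipschitz in $(w_k,y_k)$ under Assumption~\ref{asm:1}.

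For the summation bound, the key idea is that the ``base cost'' of $\fO(\sqrt\kappa\log(1/\epsilon))$ per inner call contributes $\fO(\sqrt\kappa\,\fK\log(1/\epsilon))$ over the epoch, and the remaining $\sqrt\kappa$-scaled logarithmic corrections telescope. Concretely, each $T_{t,k}$ looks like $\sqrt\kappa\big(c_1\log(1/\epsilon) + c_2\log\|w_{t,k}-w_{t,k-1}\| + c_3\big)$ up to constants; summing the middle term over $k$ and applying Jensen's inequality (concavity of $\log$) converts $\sum_k \log\|w_{t,k}-w_{t,k-1}\|$ into $\fK\log\big(\tfrac{1}{\fK}\sum_k\|w_{t,k}-w_{t,k-1}\|^2\big)^{1/2}$, and the restart condition bounds this sum by (a polynomial in $\kappa$ times) $B^2 = \epsilon/\tilde\rho$. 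Since $B$ and $\epsilon$ are polynomially related to the problem parameters, $\log(\cdot) = \fO(\log(1/\epsilon))$ after absorbing $\mathrm{poly}(\kappa)$ factors into the logarithm (which only costs another logarithmic term), yielding the claimed $\fO(\kappa^{0.5}\fK\log(1/\epsilon))$. The same bookkeeping applies verbatim to $\sum_k T_{t,k}'$, with the additional care that the CG warm start $v_{t,k-1}$ may be further from $v_{t,k}^*$ because both the point $w$ and the inner solution $y$ move; this only inflates the logarithmic argument by a polynomial factor and is absorbed.

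The main obstacle I anticipate is the bootstrapping/induction structure: Condition~\ref{con:4.1} for step $k$ is used to bound the initialization error for step $k+1$, but establishing the uniform displacement bound $\|w_{t,k}-w_{t,k-1}\|$ requires that the inexact hypergradients $u_{t,k}$ were accurate at all earlier steps, which is exactly what Condition~\ref{con:4.1} (via Lemma~\ref{lem:bias of gradient}) guarantees. So the argument must be run as a simultaneous induction over $k$ within the epoch: assuming \eqref{equ:cond1}--\eqref{equ:cond2} hold up to step $k-1$, derive the displacement bound at step $k$, then choose $T_{t,k}, T_{t,k}'$ large enough (but still $\fO(\sqrt\kappa\log(1/\epsilon))$ on average) to re-establish the conditions at step $k$. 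Care is also needed at the epoch boundary, where $y_{t,-1}$ is recomputed from scratch via $\mathrm{AGD}(g(x_{t,-1},\cdot),0,T_{t,-1},\alpha,\beta)$ starting from $0$; here the initialization error is $\|y^*(x_{t,-1})\|$, which must be bounded a priori (e.g., by boundedness of the trajectory or by the $M$-Lipschitz assumption on $f$ controlling how far the relevant region extends), and this single extra call costs $\fO(\sqrt\kappa\log(1/\epsilon))$ as well, consistent with the stated bound. I would organize the final write-up around this induction, deferring the elementary smoothness estimates to the appendix.
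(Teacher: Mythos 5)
Your proposal follows essentially the same route as the paper: an induction within each epoch that warm-starts AGD/CG, controls the initialization error via the $\kappa$-Lipschitzness of $y^*$ (Lemma~\ref{lem:y*-Lip}) plus the already-established Condition~\ref{con:4.1} at step $k-1$, and concludes that each inner call needs only $\fO(\sqrt{\kappa}\log(1/\epsilon))$ iterations. Two remarks on where the paper is simpler than your sketch. First, your Jensen/telescoping treatment of $\sum_k\log\|w_{t,k}-w_{t,k-1}\|$ is unnecessary: the restart condition directly gives the uniform bound $\|w_{t,k}-w_{t,k-1}\|\le 2B$ for all $k<\fK$, so every $T_{t,k}$ with $k\ge 0$ can be set to the \emph{same} value $\lceil 2\sqrt{\kappa}\log\bigl(\tfrac{2\tilde L\sqrt{\kappa+1}}{\sigma}(\tfrac{\sigma}{2\tilde L}+2\kappa B)\bigr)\rceil$, and likewise for CG the paper avoids any Lipschitz argument on $v^*$ by using the crude uniform bound $\|v^*(x,y)\|\le M/\mu$ (from $\|\nabla_y f\|\le M$ and $\nabla^2_{yy}g\succeq\mu I$), so the warm-start error is at most $\sigma/(2\ell)+2M/\mu$. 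Second, the one piece you leave at the level of a gesture---bounding $\|y^*(x_{t,-1})\|$ for the cold-started AGD call at each epoch boundary---is a genuine lemma in the paper: it is proved by combining the epoch count $\Delta\sqrt{\tilde\rho}\,\epsilon^{-3/2}$ from Lemma~\ref{lem:epoch_num1} with a per-epoch displacement bound $\|x_{t,\fK}-x_{t,0}\|\le 2B+\eta(\sigma+C)$, where the constant $C$ bounding $\|\nabla\Phi(w_{t,\fK-1})\|$ is obtained by a contradiction argument (otherwise the descent inequality would drive $\Phi$ to $-\infty$); since the resulting bound $\hat C$ is only polynomial in $1/\epsilon$ and the problem parameters, $\log\hat C=\fO(\log(1/\epsilon))$ and the single extra call indeed costs $\fO(\sqrt{\kappa}\log(1/\epsilon))$, as you anticipated. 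With that lemma supplied, your argument goes through.
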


The detailed forms of $T_{t,k}$ and $T_{t,k}'$ can be found in Appendix~\ref{apd:sec3}.
Combined with Theorem~\ref{thm:one-oder}, we finally obtain the total number of oracle calls as follows:

\begin{corollary}[Oracle complexity of \texttt{RAHGD} finding $\epsilon$-FOSP]\label{cor:one-order}
Under Assumption~\ref{asm:1}, we run \emph{\texttt{RAHGD}} in Algorithm~\ref{alg:AHGD} with the parameters set as in Theorem~\ref{thm:one-oder} and Proposition~\ref{thm:T_bound1}. The output $\hat{w}$ is then an $\epsilon$-first-order stationary point of~$\Phi(x)$.
Additionally, the oracle complexities satisfy
$Gc(f,\epsilon)  =\tilde\fO(\kappa^{2.75}\epsilon^{-1.75})$, $Gc(g,\epsilon) = \tilde\fO(\kappa^{3.25}\epsilon^{-1.75})$, $JV(g,\epsilon) = \tilde\fO(\kappa^{2.75}\epsilon^{-1.75})$ and $HV(g,\epsilon) = \tilde\fO(\kappa^{3.25}\epsilon^{-1.75})$.
\end{corollary}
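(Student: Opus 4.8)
The plan is to treat this corollary purely as a bookkeeping consequence of Theorem~\ref{thm:one-oder} and Proposition~\ref{thm:T_bound1}, converting iteration counts into oracle counts. The accuracy guarantee is immediate: Theorem~\ref{thm:one-oder} already produces $\hat w$ with $\norm{\nabla\Phi(\hat w)}\le 83\epsilon$, so running the algorithm with the target accuracy rescaled by the absolute constant $1/83$ yields a genuine $\epsilon$-first-order stationary point, and since this rescaling touches only absolute constants it leaves every complexity order unchanged. Hence it suffices to bound the four oracle counts.

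Next I would fix notation for the total outer-loop iteration count $N \triangleq \sum_t \fK_t$. Theorem~\ref{thm:one-oder} gives $N = \fO(\Delta\tilde L^{0.5}\tilde\rho^{0.25}\epsilon^{-1.75})$, and substituting $\tilde L = \fO(\kappa^3)$ from Lemma~\ref{lem:gradient_Lip} and $\tilde\rho = \fO(\kappa^5)$ from Lemma~\ref{lem:Hessian_Lip} (with $\Delta$ treated as a problem-dependent constant) gives $N = \fO(\kappa^{2.75}\epsilon^{-1.75})$. I would then read the per-iteration oracle cost directly off Algorithm~\ref{alg:AHGD}: each pass of the while loop evaluates $\nabla_x f$ and $\nabla_y f$ a constant number of times and forms one Jacobian-vector product $\nabla^2_{xy}g(w_{t,k},y_{t,k})v_{t,k}$; each AGD iteration inside Line~\ref{line:agd} costs one gradient of $g$; and each CG iteration costs one Hessian-vector product with $\nabla^2_{yy}g(w_{t,k},y_{t,k})$. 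Consequently $Gc(f,\epsilon) = \fO(N)$ and $JV(g,\epsilon) = \fO(N)$, both of which are $\tilde\fO(\kappa^{2.75}\epsilon^{-1.75})$.

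For the $g$-gradient and Hessian-vector counts I would invoke the per-epoch bounds of Proposition~\ref{thm:T_bound1}: in epoch $t$ the AGD calls use in total $\sum_{k=-1}^{\fK_t-1}T_{t,k} = \fO(\kappa^{0.5}\fK_t\log(1/\epsilon))$ gradients of $g$ (this sum already absorbs the cold-started call $T_{t,-1}$ at the restart), and the CG calls use in total $\sum_{k=0}^{\fK_t-1}T_{t,k}' = \fO(\kappa^{0.5}\fK_t\log(1/\epsilon))$ Hessian-vector products. Summing over epochs and pulling out the common factor gives $Gc(g,\epsilon) = \fO(\kappa^{0.5}\log(1/\epsilon))\cdot\sum_t\fK_t = \fO(\kappa^{0.5}\log(1/\epsilon))\cdot N = \tilde\fO(\kappa^{3.25}\epsilon^{-1.75})$, and identically $HV(g,\epsilon) = \tilde\fO(\kappa^{3.25}\epsilon^{-1.75})$.

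The argument is essentially arithmetic, so there is no genuine analytical obstacle; the one place to be careful is the interplay between the two estimates. Proposition~\ref{thm:T_bound1} is stated per epoch and its inner-loop bounds are \emph{linear} in the epoch length $\fK_t$, which is precisely what lets the sum over epochs collapse to $N=\sum_t\fK_t$, the quantity controlled by Theorem~\ref{thm:one-oder}; it also matters that the number of restarts is at most $N$, so the fresh AGD initializations at restarts contribute no extra polynomial-in-$\kappa$ or polynomial-in-$\epsilon$ factor beyond the polylogarithmic one already present. Assembling the four bounds then gives the claimed complexities.
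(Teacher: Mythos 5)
Your proposal is correct and follows essentially the same route as the paper: bound the total outer-loop iterations via Theorem~\ref{thm:one-oder} (epochs $\times$ epoch length, equivalently $\sum_t\fK_t$), charge one $f$-gradient and one Jacobian-vector product per outer iteration, and multiply by the per-epoch inner-loop bounds $\fO(\kappa^{0.5}\fK\log(1/\epsilon))$ from Proposition~\ref{thm:T_bound1} for $Gc(g,\epsilon)$ and $HV(g,\epsilon)$, then substitute $\tilde L=\fO(\kappa^3)$ and $\tilde\rho=\fO(\kappa^5)$. Your explicit remark that the $83\epsilon$ guarantee is absorbed by a constant rescaling of the target accuracy is a small point the paper leaves implicit, but it does not change the argument.
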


The algorithm can be adapted to solving the single-level nonconvex minimization problem where $\kappa$ reduces to 1, and the given complexity matches the state-of-the-art~\citep{carmon2018accelerated,agarwal2017finding,carmon2017convex,jin2018accelerated,li2022restarted}.
The best known lower bound in this setting is $O(\epsilon^{-1.714})$ \citep{carmon2020lower}.
Closing this $O(\epsilon^{-0.036})$-gap remains open even in nonconvex minimization settings.

\pb\section{Perturbed Restarted Accelerated HyperGradient Descent Algorithm}\label{sec_perturb}
In this section, we introduce perturbation to our \texttt{RAHGD} algorithm.
In many nonconvex problems encountered in practice in machine learning, most first-order stationary points presented are saddle points~\citep{dauphin2014identifying,lee2019first,jin2017escape}.
Recall that finding second-order stationary points require not only zero gradient, but also a positive semidefinite Hessian, a condition that certifies that we escape all saddle points successfully.
Earlier work of~\citet{jin2018accelerated,li2022restarted} shows that one can obtain an approximate second-order stationary point by intermittently \emph{perturbing} the algorithm using random noise.
We present the details of our \emph{perturbed restarted accelerated hypergradient descent} (\texttt{P\texttt{\texttt{RAHGD}}}) in Algorithm~\ref{alg:AHGD}.
Compared with \texttt{RAHGD}, a noise-perturbation step is added in Algorithm~\ref{alg:AHGD} [Line~\ref{line:restart}, option~\texttt{Perturbation}$\,=1$].

We proceed with the complexity analysis for \texttt{PRAHGD}, where we show that \texttt{PRAHGD} in Algorithm~\ref{alg:AHGD} outputs an $(\epsilon,\sqrt{\tilde\rho\epsilon}\,)$-second-order stationary point within $\tilde \fO(\kappa^{3.25}\epsilon^{-1.75})$ oracle queries:

\begin{theorem}[\texttt{PRAHGD} finding $(\epsilon,O(\sqrt{\epsilon}))$-SOSP]\label{thm:second-order}
Suppose that Assumption~\ref{asm:1} and Condition~\ref{con:4.1} hold. We assume that $\epsilon\le\frac{\tilde L^2}{\tilde \rho}$.
Let
\begin{align*}
\chi = \fO\left(\log\frac{d_x}{\zeta\epsilon}\right)
,\quad
\eta = \frac{1}{4\tilde L}
,\quad
K = \frac{2\chi}{\theta}
,\quad
B = \frac{1}{288\chi^2}\sqrt{\frac{\epsilon}{\tilde\rho}},\quad
\theta = \frac{1}{2}
(\tilde\rho\epsilon\eta^2)^{1/4}
,\quad
\\
\sigma = \min\left\{\dfrac{\tilde \rho B \zeta r \theta}{2\sqrt{d_x}}, \epsilon^2\right\}
,\quad
\alpha=\frac{1}{\ell}
,\quad
\beta=\frac{\sqrt{\kappa}-1}{\sqrt{\kappa}+1}
,\quad
r
=
\min\left\{
\frac{\tilde LB^2}{4C},\frac{B + B^2}{\sqrt{2}}
,
\frac{\theta B}{20K},\sqrt{\frac{\theta B^2}{2K}}
\right\}
,
\end{align*}
for some positive constant $C$.
Denote $
\Delta = \Phi(x_{\rm{int}}) - \min_{x\in\BR^{d_x}}\Phi(x)
$.
Then \emph{\texttt{PRAHGD}} in Algorithm~\ref{alg:AHGD} terminates in at most $
\fO\big(
\Delta\tilde L^{0.5}\tilde\rho^{0.25}\chi^6
\cdot
\epsilon^{-1.75}
\big)
$ iterations and the output satisfies $
\norm{\nabla \Phi(\hat{w})}\le \epsilon
$ and $
\lambda_{\min}(\nabla^2\Phi(\hat{w}))
\ge
-1.011\sqrt{\tilde\rho\epsilon}
$ with probability at least $1-\zeta$.
\end{theorem}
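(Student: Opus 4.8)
The plan is to reduce the analysis of \texttt{PRAHGD} to the perturbed-inexact-AGD machinery developed for single-level nonconvex optimization (in the style of \citet{jin2018accelerated,li2022restarted}), applied to the objective $\Phi(x)$, which by Lemmas~\ref{lem:gradient_Lip} and~\ref{lem:Hessian_Lip} is $\tilde L$-gradient Lipschitz and $\tilde\rho$-Hessian Lipschitz with $\tilde L=\fO(\kappa^3)$, $\tilde\rho=\fO(\kappa^5)$, while the hypergradient oracle is inexact with bias at most $\sigma$ by Lemma~\ref{lem:bias of gradient}. First I would establish the key descent/restart estimate: in any epoch where the restart condition $k\sum_i\|x_{t,i+1}-x_{t,i}\|^2>B^2$ does not trigger within $K$ iterations, the accelerated dynamics together with the choices of $\eta,\theta,B,K$ force the ``momentum Lyapunov function'' to decrease by roughly $\Omega(B^2/\eta)$ over the epoch, up to error terms controlled by $\sigma$ (this is where $\sigma\le\epsilon^2$ is used, to make the bias error lower-order than the $\fO(\epsilon^2)$ per-epoch gain). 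Since $\Phi$ is bounded below by $\Delta$ below its initial value, the number of such ``long'' epochs is $\fO(\Delta/B^2\cdot\eta^{-1})$; plugging $B=\fO(\chi^{-2}\sqrt{\epsilon/\tilde\rho})$ and $\eta=1/(4\tilde L)$ gives the stated $\fO(\Delta\tilde L^{0.5}\tilde\rho^{0.25}\chi^6\epsilon^{-1.75})$ outer iteration count; each epoch has at most $K=2\chi/\theta=\tilde\fO(\epsilon^{-1/4})$ inner iterations, and Proposition~\ref{thm:T_bound1} (adapted) bounds the AGD/CG work per outer step by $\tilde\fO(\kappa^{0.5})$, which is how the $\kappa$ powers in the final oracle bounds arise.

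Next I would handle the saddle-escape guarantee. The argument is the standard ``coupling'' / ``improve-or-localize'' dichotomy: at the end of a ``short'' epoch (one where the restart triggered quickly, so the iterates stayed localized in a ball of radius $\fO(B)$ and the gradient norm is small), consider the point $x_{t,k}$ and inject the perturbation $\xi\sim\mathrm{Unif}(\sB(r))$. If $\lambda_{\min}(\nabla^2\Phi(x_{t,k}))\le-1.011\sqrt{\tilde\rho\epsilon}$, then the negative curvature direction defines a thin ``stuck region'' in the perturbation ball, and two runs of the accelerated dynamics started from perturbations that straddle this region along the minimum-eigenvector direction must separate: at least one of them escapes, producing a function-value decrease of $\Omega(\tilde\rho^{-1/2}\epsilon^{3/2}/\chi^c)$ for an appropriate power $c$, which again bounds the number of perturbation rounds by $\fO(\Delta\cdot\tilde\rho^{1/2}\epsilon^{-3/2}\,\chi^c)$. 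Here the many constraints defining $r$ enter: $r\le\theta B/(20K)$ and $r\le\sqrt{\theta B^2/(2K)}$ keep the perturbation from destroying the localization/descent from the non-perturbed part of the analysis, $r\le(B+B^2)/\sqrt2$ and $r\le\tilde LB^2/(4C)$ keep the perturbed iterate within the region where the quadratic Taylor expansion of $\Phi$ around $x_{t,k}$ is valid with Hessian-Lipschitz remainder $\le\tilde\rho r^2$, and the choice $\sigma\le\tilde\rho B\zeta r\theta/(2\sqrt{d_x})$ ensures the inexact-gradient bias is small relative to the escape signal. A union bound over all $\fO(\tilde\fO(1))$ perturbation rounds with failure probability $\zeta/\mathrm{poly}$ each gives overall success probability $1-\zeta$, which is the source of the $\chi=\fO(\log(d_x/(\zeta\epsilon)))$ logarithmic factor and the $\chi^6$ in the iteration count.

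Finally I would assemble the two pieces: the algorithm cannot run forever, because each epoch is either ``long'' (net $\Phi$-decrease $\Omega(\epsilon^2)$) or ``short'' (either it already certifies a small gradient at a point whose Hessian, thanks to the perturbation test, has $\lambda_{\min}\ge-1.011\sqrt{\tilde\rho\epsilon}$, or the perturbation produces another $\Omega(\epsilon^{3/2})$-scale decrease); summing decrements against $\Delta$ and noting $\sqrt{\tilde\rho\epsilon}=\fO(\kappa^{2.5}\sqrt\epsilon)$ yields both the $\tilde\fO(\kappa^{3.25}\epsilon^{-1.75})$ complexity (after multiplying the outer count by the per-iteration $\tilde\fO(\kappa^{0.5})$ inner cost from Proposition~\ref{thm:T_bound1}) and the claimed $(\epsilon,\fO(\kappa^{2.5}\sqrt\epsilon))$-SOSP guarantee on the output $\hat w$. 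The condition $\epsilon\le\tilde L^2/\tilde\rho$ is a mild regularity requirement guaranteeing $\theta\in(0,1)$ and $K\ge1$, and that the step-size/momentum are in the valid accelerated regime.

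\textbf{Main obstacle.} The hard part will be the saddle-escape coupling lemma in the presence of \emph{inexact} hypergradients: the classical analysis tracks two perturbed trajectories of exact AGD and shows their difference grows geometrically along the negative-curvature eigendirection, but here each step carries an $\fO(\sigma)$ error, so one must show these errors accumulate slowly enough (over the $K=\tilde\fO(\epsilon^{-1/4})$ steps of an epoch) not to swamp the $\Omega(r)$ separation signal — this is exactly what forces the delicate choice $\sigma=\min\{\tilde\rho B\zeta r\theta/(2\sqrt{d_x}),\epsilon^2\}$ and requires carefully propagating the error through the coupled linearized recursion while simultaneously controlling the Hessian-Lipschitz deviation of $\Phi$ along both trajectories. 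Getting all the constants to line up so the threshold is the clean $1.011\sqrt{\tilde\rho\epsilon}$ (rather than some larger multiple) is the most technically demanding bookkeeping in the proof.
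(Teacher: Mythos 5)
Your proposal follows essentially the same route as the paper's proof: a per-epoch decrease bound on $\Phi$ (split into the large-gradient case giving $\Omega(B^2/\eta)$ and the small-gradient case giving the binding $\Omega(\epsilon^{3/2}/(\sqrt{\tilde\rho}\,\chi^5))$, which is what actually yields the stated iteration count), a two-trajectory coupling lemma along the minimum eigendirection of $\nabla^2\Phi$ in which the inexact-hypergradient error is absorbed precisely because $\sigma\le\tilde\rho B r_0\theta/2$ with $r_0=\zeta r/\sqrt{d_x}$, a stuck-region volume argument for the failure probability, and a Hessian-Lipschitz transfer to the averaged output $\hat w$. The only slips are presentational: descent is guaranteed in the epochs where the restart condition \emph{does} trigger (the non-triggering epoch is the terminal one), and the $\Delta\eta/B^2$ count alone does not produce the $\epsilon^{-1.75}$ rate—but since you correctly invoke the $\epsilon^{3/2}$-scale decrease elsewhere, the plan as a whole matches the paper's argument.
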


Theorem~\ref{thm:second-order} says that \texttt{PRAHGD} in Algorithm~\ref{alg:AHGD} can find an $(\epsilon,\sqrt{\tilde\rho\epsilon}\,)$-second-order stationary point within $\tilde{\fO}(\kappa^{2.75}\epsilon^{-1.75})$ iterations in the outer loop.
The following proposition shows that Condition~\ref{con:4.1} holds in this setting. In addition, the total number of iterations in one epoch is at most $\fO(\kappa^{0.5}\,\fK\log(1/\epsilon))$:

\begin{proposition}\label{thm:T_bound2}
Suppose Assumption~\ref{asm:1} holds. In the $t$-th epoch, we set the inner loop iteration number $T_{t,k}$ and the CG iteration number $T_{t,k}'$. We run Algorithm~\ref{alg:AHGD} with the parameters chosen in Theorem~\ref{thm:second-order}. Then all $y_{t,k}$ and $v_{t,k}$ satisfy the Condition~\ref{con:4.1}. For each $t$, we also have 
 the inner loops $
\sum_{k=-1}^{\fK-1}T_{t,k}
\le
\fO\left( \kappa^{0.5}\fK\log(1/\epsilon) \right)
$
and
$
\sum_{k=0}^{\fK-1}T_{t,k}'
\le
\fO\left( \kappa^{0.5}\fK\log(1/\epsilon) \right)
$
holds.
\end{proposition}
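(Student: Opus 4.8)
The plan is to mirror the proof of Proposition~\ref{thm:T_bound1}, carrying out a two-level induction---over epochs $t$ and, inside each epoch, over the inner index $k$---that simultaneously (a) verifies Condition~\ref{con:4.1} for every $y_{t,k}$ and $v_{t,k}$ and (b) bounds the AGD/CG iteration numbers $T_{t,k},T_{t,k}'$ by $\fO(\kappa^{0.5}\log(1/\epsilon))$. The structural fact I will exploit is that consecutive anchors $w_{t,k}$ are close: for every $k<\fK$ the restart test has not fired, so $k\sum_{i=0}^{k-1}\norm{x_{t,i+1}-x_{t,i}}^2\le B^2$, whence every within-epoch step (and therefore $\norm{w_{t,k}-w_{t,k-1}}$) is polynomially bounded in $\kappa,1/\epsilon,d_x,1/\zeta$. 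The only genuinely new element relative to Proposition~\ref{thm:T_bound1} is the perturbation $\xi\sim{\rm Unif}(\sB(r))$ inserted at Line~\ref{line:restart}; since $r$ is chosen polynomially small (e.g.\ $r\le\theta B/(20K)$), it shifts all relevant distances only by an additive $r$ and is absorbed into the logarithm.

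First I would handle the inductive step inside a fixed epoch $t$. Assuming $y_{t,k-1}$ and $v_{t,k-1}$ satisfy Condition~\ref{con:4.1}, Lemma~\ref{lem:bias of gradient} makes $u_{t,k-1}$ a faithful inexact hypergradient (bias $\le\sigma$), so the iterates stay in a bounded region by the descent structure behind Theorem~\ref{thm:second-order}, and $\norm{u_{t,k}}$---hence the final step $\norm{x_{t,\fK}-x_{t,\fK-1}}\le(1-\theta)\norm{x_{t,\fK-1}-x_{t,\fK-2}}+\eta\norm{u_{t,\fK-1}}$---is polynomially bounded; combined with the restart test this makes $\norm{w_{t,k}-w_{t,k-1}}$ polynomially bounded. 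Then $\kappa$-Lipschitzness of $y^*$ (Lemma~\ref{lem:y*-Lip}) gives $\norm{y_{t,k-1}-y^*(w_{t,k})}\le\sigma/(2\tilde L)+\kappa\norm{w_{t,k}-w_{t,k-1}}$, which is again polynomially bounded, so Lemma~\ref{lem:AGD} applied to the $\ell$-smooth, $\mu$-strongly convex $g(w_{t,k},\cdot)$ shows that $T_{t,k}=\fO(\kappa^{0.5}\log(1/\epsilon))$ AGD steps restore $\norm{y_{t,k}-y^*(w_{t,k})}\le\sigma/(2\tilde L)$; here all polynomial factors, including $1/\sigma=\fO(\max\{\sqrt{d_x}/(\tilde\rho B\zeta r\theta),\epsilon^{-2}\})$, are swallowed by the $\log$. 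The argument for $v_{t,k}$ is parallel: $\nabla_{yy}^2 g(w_{t,k},y_{t,k})$ has condition number $\le\kappa$; the map $(w,y)\mapsto(\nabla_{yy}^2 g(w,y))^{-1}\nabla_y f(w,y)$ is Lipschitz with a polynomial constant (using $\norm{(\nabla_{yy}^2 g)^{-1}}\le1/\mu$, $\norm{\nabla_y f}\le M$, and the $\rho$- and $\ell$-Lipschitz bounds of Assumption~\ref{asm:1}), and $\norm{y_{t,k}-y_{t,k-1}}$ is polynomially bounded by the triangle inequality with Lemma~\ref{lem:y*-Lip}; hence $\norm{v_{t,k-1}-v_k^*}$ is polynomially bounded and Lemma~\ref{lem:CG} gives $T_{t,k}'=\fO(\kappa^{0.5}\log(1/\epsilon))$.

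Next I would treat the base indices and sum up. At $k=-1$ of each epoch, AGD is cold-started from $0$, so I bound $\norm{0-y^*(x_{t,-1})}\le\norm{y^*(x_{0,0})}+\kappa\cdot(\text{total displacement of the }x\text{-iterates through epoch }t)$; the displacement is polynomially bounded because the epoch count is finite (by the potential decrease in Theorem~\ref{thm:second-order}) and each epoch moves the iterate by $\fO(B)+r$ overall, whence $T_{t,-1}=\fO(\kappa^{0.5}\log(1/\epsilon))$, and similarly for the initialization of $v$ (warm-started from $v_{t-1,\fK}$, or equal to $y_{0,-1}$ at $t=0$). Summing over $k=-1,0,\dots,\fK-1$---i.e.\ over $\fK+1\le2\fK$ rounds---yields $\sum_{k=-1}^{\fK-1}T_{t,k}\le\fO(\kappa^{0.5}\fK\log(1/\epsilon))$ and $\sum_{k=0}^{\fK-1}T_{t,k}'\le\fO(\kappa^{0.5}\fK\log(1/\epsilon))$, while the induction certifies Condition~\ref{con:4.1} throughout.

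The hard part will be keeping the induction self-consistent: the movement bound $\norm{w_{t,k}-w_{t,k-1}}=\mathrm{poly}(\kappa,1/\epsilon,d_x,1/\zeta)$ requires both that the restart test has not fired (true for $k<\fK$) \emph{and} that $u_{t,k}$ has bias $\le\sigma$ (Lemma~\ref{lem:bias of gradient}), which is precisely the inductive hypothesis---so the two claims must be advanced together---and one must also carry the ``iterates stay in a bounded region'' assertion across epoch boundaries despite the perturbation. Because $r$ is polynomially small and the epoch count is finite, these are only bookkeeping refinements of the unperturbed argument of Proposition~\ref{thm:T_bound1}, which I expect to reuse essentially verbatim after substituting the parameter values from Theorem~\ref{thm:second-order}.
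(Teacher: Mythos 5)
Your proposal is correct and follows essentially the same route as the paper's proof of Proposition~\ref{thm:T_bound2}: the same two-level induction (over epochs $t$ and inner index $k$) that simultaneously advances Condition~\ref{con:4.1} and the iteration bounds, the same use of Lemmas~\ref{lem:AGD}, \ref{lem:CG}, \ref{lem:y*-Lip} and~\ref{lem:bias of gradient} together with the restart test $\norm{w_{t,k}-w_{t,k-1}}\le 2B$, the same cold-start bound $\norm{y^*(w_{t,-1})}\le\tilde C$ obtained by summing per-epoch displacements (augmented by the perturbation radius $r$) over the finitely many epochs, and the same absorption of all polynomial factors (including $1/\sigma$) into the logarithm. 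The only cosmetic deviation is in the CG warm start, where the paper simply uses the universal bound $\norm{v^*(x,y)}\le M/\mu$ to get $\norm{v^*_{t,k-1}-v^*_{t,k}}\le 2M/\mu$, while you invoke Lipschitz continuity of $(w,y)\mapsto v^*(w,y)$ plus boundedness of $\norm{y_{t,k}-y_{t,k-1}}$; both give a polynomially bounded initial error and the same final complexity.
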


The detailed form of $T_{t,k}$ and $T_{t,k}'$ can be found in Appendix~\ref{apd:sec_perturb}. Combining this result with Theorem~\ref{thm:second-order}, we finally obtain the total number of gradient oracle calls as follows:

\begin{corollary}[Oracle complexity of \texttt{PRAHGD} finding $(\epsilon,O(\sqrt{\epsilon}))$-SOSP]\label{cor:second-order}
Under Assumption~\ref{asm:1},  we run \emph{\texttt{PRAHGD}} in  Algorithm~\ref{alg:AHGD} with all parameters set as in Theorem~\ref{thm:second-order}.
The output $\hat{w}$ is then an $\big(\epsilon, \sqrt{\tilde\rho\epsilon}\,\big)$-second-order stationary point of~$\Phi(x)$.
Additionally, the oracle complexities satisfy that $Gc(f,\epsilon) = \tilde\fO(\kappa^{2.75}\epsilon^{-1.75})$, $Gc(g,\epsilon) = \tilde\fO(\kappa^{3.25}\epsilon^{-1.75})$, $JV(g,\epsilon) = \tilde\fO(\kappa^{2.75}\epsilon^{-1.75})$ and $HV(g,\epsilon) = \tilde\fO(\kappa^{3.25}\epsilon^{-1.75})$.
\end{corollary}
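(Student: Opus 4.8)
The plan is to combine the outer‑loop iteration bound from Theorem~\ref{thm:second-order} with the per‑epoch inner‑loop bounds from Proposition~\ref{thm:T_bound2}, exactly as was done for the first‑order case in Corollary~\ref{cor:one-order}. The correctness part (that $\hat w$ is an $(\epsilon,\sqrt{\tilde\rho\epsilon}\,)$-SOSP with probability at least $1-\zeta$) is immediate from Theorem~\ref{thm:second-order} together with Lemma~\ref{lem:Hessian_Lip}, which certifies that the Hessian‑Lipschitz constant $\tilde\rho$ used throughout is $\fO(\kappa^5)$; I would state this in one line and then devote the bulk of the argument to the oracle count.

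First I would translate Theorem~\ref{thm:second-order} into a bound on the total number of outer iterations $N_{\mathrm{out}}\triangleq\sum_t \fK_t$. Plugging $\tilde L=\fO(\kappa^3)$ and $\tilde\rho=\fO(\kappa^5)$ into $\fO(\Delta\tilde L^{0.5}\tilde\rho^{0.25}\chi^6\epsilon^{-1.75})$ gives $N_{\mathrm{out}}=\tilde\fO(\kappa^{1.5+1.25}\epsilon^{-1.75})=\tilde\fO(\kappa^{2.75}\epsilon^{-1.75})$, where the $\chi^6=\mathrm{polylog}(d_x/(\zeta\epsilon))$ factor is absorbed into the $\tilde\fO(\cdot)$. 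Next I would count oracle calls per outer iteration. Each outer step $k$ in epoch $t$ makes: (i) a constant number of evaluations of $\nabla_x f$, $\nabla_y f$, $\nabla^2_{xy}g\cdot v$ and $\nabla^2_{yy}g\cdot v$ in Lines~7--8 of Algorithm~\ref{alg:AHGD}, contributing $\fO(1)$ to $Gc(f,\cdot)$, $JV(g,\cdot)$ and $HV(g,\cdot)$; (ii) one AGD call with $T_{t,k}$ inner steps, each a gradient of $g(w_{t,k},\cdot)$, contributing $T_{t,k}$ to $Gc(g,\cdot)$; and (iii) one CG call with $T'_{t,k}$ inner steps, each a Hessian‑vector product $\nabla^2_{yy}g\cdot p$, contributing $T'_{t,k}$ to $HV(g,\cdot)$. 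Summing over an epoch and invoking Proposition~\ref{thm:T_bound2}, the inner AGD/CG work per epoch is $\fO(\kappa^{0.5}\fK_t\log(1/\epsilon))$ for each of the two subroutines, so summing over all epochs the AGD work is $\fO(\kappa^{0.5}\log(1/\epsilon))\cdot N_{\mathrm{out}}$ and likewise for CG; the restart‑triggered extra AGD call $y_{t,-1}\leftarrow\mathrm{AGD}(\cdots)$ at Line~16, incurred once per epoch with $T_{t,-1}=\tilde\fO(\kappa^{0.5})$ steps, contributes at most $\tilde\fO(\kappa^{0.5})\cdot(\text{number of epochs})\le\tilde\fO(\kappa^{0.5})N_{\mathrm{out}}$ and is therefore lower order.

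Assembling the pieces: $Gc(f,\epsilon)=\fO(1)\cdot N_{\mathrm{out}}=\tilde\fO(\kappa^{2.75}\epsilon^{-1.75})$; $JV(g,\epsilon)=\fO(1)\cdot N_{\mathrm{out}}=\tilde\fO(\kappa^{2.75}\epsilon^{-1.75})$; $Gc(g,\epsilon)=\tilde\fO(\kappa^{0.5})\cdot N_{\mathrm{out}}=\tilde\fO(\kappa^{3.25}\epsilon^{-1.75})$; and $HV(g,\epsilon)$ receives both the $\fO(1)$ per‑step contribution from Line~8 and the $\tilde\fO(\kappa^{0.5})$ per‑step CG contribution, so $HV(g,\epsilon)=\tilde\fO(\kappa^{3.25}\epsilon^{-1.75})$. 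I do not expect a genuine obstacle here — the corollary is a bookkeeping consequence of the two preceding results — but the one point requiring care is making sure the $\chi^6$ polylogarithmic blow‑up in $N_{\mathrm{out}}$, the $\log(1/\epsilon)$ factors from the AGD/CG warm‑start analysis, and the $\log(d_x/(\zeta\epsilon))$ in $\chi$ are all legitimately subsumed in the $\tilde\fO(\cdot)$ notation, and that the parameter choices in Theorem~\ref{thm:second-order} (in particular $\sigma=\min\{\tilde\rho B\zeta r\theta/(2\sqrt{d_x}),\epsilon^2\}$) are consistent with the accuracy requirements that Proposition~\ref{thm:T_bound2} uses to bound $T_{t,k}$ and $T'_{t,k}$ by $\fO(\kappa^{0.5}\log(1/\epsilon))$ — which is exactly the content asserted in that proposition, so no new work is needed.
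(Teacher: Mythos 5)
Your proposal is correct and follows essentially the same route as the paper: both arguments take the outer-iteration bound from Theorem~\ref{thm:second-order} (equivalently, the epoch count from Lemma~\ref{lem:epoch_num2} times $K$), charge $\fO(1)$ calls per outer step to $Gc(f,\epsilon)$ and $JV(g,\epsilon)$, and multiply by the $\fO(\kappa^{0.5}\fK\log(1/\epsilon))$ per-epoch AGD/CG cost from Proposition~\ref{thm:T_bound2} to get $Gc(g,\epsilon)$ and $HV(g,\epsilon)$, with $\tilde L=\fO(\kappa^3)$, $\tilde\rho=\fO(\kappa^5)$ and all $\chi$ and $\log(1/\epsilon)$ factors absorbed into $\tilde\fO(\cdot)$. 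The only cosmetic difference is that you sum $\fK_t$ directly over epochs rather than bounding each epoch by $K$, and you separately flag the $k=-1$ warm-start AGD call, which Proposition~\ref{thm:T_bound2} already includes in its sum $\sum_{k=-1}^{\fK-1}T_{t,k}$.
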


We remark that the listed query complexities are identical to the corresponding ones in Corollary~\ref{cor:one-order} of Section~\ref{sec_convergence}, up to a polylogarithmic factor, indicating that the perturbed version imposes nearly no additional cost while allowing the avoidance of saddle points.

\begin{algorithm}[!tb]
\caption{\texttt{PRAGDA}}\label{alg:PRAGDA}
\begin{algorithmic}[1]
\STATE\textbf{Input:}
initial vector $x_{0,0}$;
step-size $\eta > 0$;
momentum param.~$\theta \in (0,1)$;
params.~$\alpha > 0,\beta \in (0,1),\{T_{t,k}\}$ of AGD;
iteration threshold $K\ge 1$;
param.~$B$ for triggering restarting;
perturbation radius $r > 0$
\STATE $k\leftarrow0,\ t\leftarrow0,\ x_{0,-1}\leftarrow x_{0,0}$\\[0.1cm]
\STATE $y_{0,-1}\leftarrow{\rm AGD} (-\bar{f}(x_{0,-1},\,\cdot\,), 0, T_{0,-1}, \alpha, \beta)$  \\[0.1cm]
\STATE \textbf{while} $k<K$ \\[0.1cm]
\STATE\quad $w_{t,k}\leftarrow x_{t,k}+(1-\theta)(x_{t,k}-x_{t,k-1})$ \\[0.1cm]
\STATE\quad $y_{t,k} \leftarrow {\rm AGD} (-\bar{f}(w_{t,k},\,\cdot\,), y_{t,k-1}, T_{t,k}, \alpha, \beta)$ \\[0.1cm]
\STATE\quad $x_{t,k+1}\leftarrow w_{t,k}-\eta \nabla_x \bar{f}(w_{t,k}, y_{t,k})$ \\[0.1cm]
\STATE\quad $k\leftarrow k+1$ \\[0.1cm]
\STATE\quad \textbf{if} $k\sum_{i=0}^{k-1}\|
x_{t,i+1} - x_{t,i}
\|^2> B^2$ \\[0.1cm]
\STATE\quad\quad $x_{t+1,0}\leftarrow x_{t,k} + \xi ,\ \xi \sim \text{Unif}(\sB(r))$\\[0.1cm]
\STATE \qquad $x_{t+1,-1}\leftarrow x_{t+1,0}$
\STATE\qquad $k\leftarrow0,\ t \leftarrow t+1$ \\[0.1cm]
\STATE \qquad $y_{t,-1}\leftarrow{\rm AGD} (-\bar{f}(x_{t,-1},\,\cdot\,), 0, T_{t,-1}, \alpha, \beta)$  \\[0.1cm]
\STATE\quad \textbf{end if} \\[0.1cm]
\STATE\textbf{end while} \\[0.1cm]
\STATE $K_0\leftarrow\argmin_{\lfloor \frac{K}{2}\rfloor\leq k\leq K-1}\norm{x_{t,k+1}-x_{t,k}}$ \\[0.1cm]
\STATE \textbf{Output:} $\hat w\leftarrow\frac{1}{K_0+1}\sum_{k=0}^{K_0}w_{t,k}$
\end{algorithmic}
\end{algorithm}

\pb\section{Improved Convergence for Accelerating Minimax Optimization}\label{sec_indications}
This section applies the ideas of \texttt{PRAHGD} to find approximate second-order stationary points in minimax optimization problem of the form
\begin{align}\label{prob:minimax}
\min_{x\in\BR^{d_x}}~\left\{
\bar{\Phi}(x)\triangleq \max_{y\in \BR^{d_y}} \bar f(x,y)
\right\}
,
\end{align}
where $\bar{f}(x,y)$ is strongly concave in $y$ but possibly nonconvex in $x$. 
Problems of form (\ref{prob:minimax}) can be regarded as a special case of a bilevel optimization problem by taking 
$f(x,y)=\bar{f}(x,y)$ and $g(x,y)=-\bar{f}(x,y)$.
Danskin's theorem yields
$
\nabla\bar{\Phi}(x)=\nabla_x \bar f(x,y^*(x))
$,
in this case, which is in fact consistent with hypergradient of form~(\ref{equ:hypergradient}) 
with the optimality condition for the lower-level problem invoked, that is, $\nabla_y f(x, y^*(x))=0$.
This implies that when applying \texttt{PRAHGD} to the minimax optimization problem~(\ref{prob:minimax}), \emph{no} CG subroutine is called and \emph{no} Jacobian-vector or Hessian-vector product operation is invoked.

We first show in Lemma~\ref{lem:minimax} that the minimax problem enjoys tighter Lipschitz continuouity parameters than the general bilevel problem:
\begin{lemma}\label{lem:minimax}
Suppose that $\bar{f}(x,y)$ is $\ell$-smooth, $\rho$-Hessian Lipschitz continuous with respect to $x$ and $y$ and $\mu$-strongly concave in $y$ but possibly nonconvex in $x$.
Then the objective $\bar\Phi(x)$ is $(\kappa+1)\ell$-smooth and admits $(4\sqrt{2}\kappa^3\rho)$-Lipschitz continuous Hessians. 
\end{lemma}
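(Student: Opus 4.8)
The plan is to specialize the general bilevel gradient/Hessian Lipschitz bounds (Lemmas~\ref{lem:gradient_Lip} and \ref{lem:Hessian_Lip}) to the minimax setting $f=\bar f$, $g=-\bar f$, where several terms collapse because $\nabla_y f(x,y^*(x))=0$. Concretely, I would start from Danskin's theorem $\nabla\bar\Phi(x)=\nabla_x\bar f(x,y^*(x))$ and differentiate again (using that $y^*$ is differentiable under strong concavity and smoothness): the chain rule gives
\begin{align*}
\nabla^2\bar\Phi(x)
=
\nabla^2_{xx}\bar f(x,y^*(x))
+
\nabla^2_{xy}\bar f(x,y^*(x))\,\nabla y^*(x),
\end{align*}
and implicit differentiation of $\nabla_y\bar f(x,y^*(x))=0$ yields $\nabla y^*(x)=-\big(\nabla^2_{yy}\bar f(x,y^*(x))\big)^{-1}\nabla^2_{yx}\bar f(x,y^*(x))$. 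So $\nabla^2\bar\Phi$ is built only from the second derivatives of $\bar f$ and the inverse of $\nabla^2_{yy}\bar f$, with \emph{no} third-derivative terms of the kind that appear in the general bilevel Hessian. This structural simplification is exactly why the minimax parameters are $\fO(\kappa^3)$ in place of $\fO(\kappa^5)$.

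For the smoothness claim, I would bound $\|\nabla\bar\Phi(x)-\nabla\bar\Phi(x')\|=\|\nabla_x\bar f(x,y^*(x))-\nabla_x\bar f(x',y^*(x'))\|$ by $\ell(\|x-x'\|+\|y^*(x)-y^*(x')\|)$ using $\ell$-smoothness, and then invoke the $\kappa$-Lipschitz continuity of $y^*$ (Lemma~\ref{lem:y*-Lip}, whose proof specializes verbatim, giving the Lipschitz constant $\ell/\mu=\kappa$ here) to get $\|\nabla\bar\Phi(x)-\nabla\bar\Phi(x')\|\le \ell(1+\kappa)\|x-x'\|$, i.e. $(\kappa+1)\ell$-smoothness.

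For the Hessian-Lipschitz claim I would proceed term by term on the expression above, writing the difference $\nabla^2\bar\Phi(x)-\nabla^2\bar\Phi(x')$ as a telescoping sum and using: (a) $\rho$-Lipschitzness of $\nabla^2_{xx}\bar f$, $\nabla^2_{xy}\bar f$, $\nabla^2_{yy}\bar f$ together with the $\kappa$-Lipschitzness of $y^*$, so each Hessian block of $\bar f$ evaluated along $y^*$ is $(1+\kappa)\rho$-Lipschitz in $x$; (b) the standard perturbation bound $\|A^{-1}-A'^{-1}\|\le \|A^{-1}\|\,\|A'^{-1}\|\,\|A-A'\|$ for the inverse Hessian, with $\|(\nabla^2_{yy}\bar f)^{-1}\|\le 1/\mu$; and (c) the uniform bounds $\|\nabla^2_{xy}\bar f\|\le\ell$ coming from $\ell$-smoothness. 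Assembling these via the product rule for the three-factor term $\nabla^2_{xy}\bar f\cdot(\nabla^2_{yy}\bar f)^{-1}\cdot\nabla^2_{yx}\bar f$ gives a bound of the form $c\,\kappa^3\rho\|x-x'\|$; tracking the constants (the worst term contributes $\ell\cdot(1/\mu)^2\cdot\ell\cdot(1+\kappa)\rho$-type factors, and there are essentially three such terms plus lower-order ones) yields the stated constant $4\sqrt{2}\kappa^3\rho$ after bounding $1+\kappa\le 2\kappa$ and collecting.

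The main obstacle I anticipate is bookkeeping the constant in part (c): one must carefully combine the several contributions from differentiating a triple product in which both the outer Jacobian factors and the inner inverse-Hessian factor vary with $x$, use $\|y^*(x)-y^*(x')\|\le\kappa\|x-x'\|$ consistently, and verify that all $\kappa$-powers collapse to exactly $\kappa^3$ (not $\kappa^4$) — this relies on the fact that the dominant terms carry $(1/\mu)^2$ from the inverse Hessian but only a single extra $\kappa=\ell/\mu$ from $y^*$-Lipschitzness, while the $\ell^2$ from the two Jacobian factors combines with $(1/\mu)^2$ to give $\kappa^2$, for $\kappa^3$ total. The qualitative shape of the argument is routine once the Danskin/implicit-function formula for $\nabla^2\bar\Phi$ is written down; everything else is careful norm estimation.
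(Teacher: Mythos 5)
Your proposal is correct in substance, but it is worth noting that the paper does not actually prove this lemma: in Appendix~\ref{sec:basic lemmas} it is stated with a citation to Lemmas~1 and~3 of \citet{luo2022finding} and no argument is given. What you have written is essentially the self-contained derivation of that imported result, and it is the standard one: Danskin for the gradient, the implicit-function formula $\nabla^2\bar\Phi = \nabla^2_{xx}\bar f - \nabla^2_{xy}\bar f\,(\nabla^2_{yy}\bar f)^{-1}\nabla^2_{yx}\bar f$ along $y^*(x)$ for the Hessian, and then telescoping the triple product with $\|(\nabla^2_{yy}\bar f)^{-1}\|\le 1/\mu$, $\|\nabla^2_{xy}\bar f\|\le\ell$, and the $\kappa$-Lipschitzness of $y^*$. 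One bookkeeping point to be careful about, since you flag the constant as the main obstacle: if you bound each Hessian block's variation along $x\mapsto(x,y^*(x))$ by $\rho(\|x-x'\|+\|y^*(x)-y^*(x')\|)\le(1+\kappa)\rho\|x-x'\|$, the four contributions sum to $(1+\kappa)\rho(1+2\kappa+\kappa^2)=(1+\kappa)^3\rho\le 8\kappa^3\rho$, which overshoots $4\sqrt{2}\kappa^3\rho$. To recover the stated constant you need the joint-Lipschitz convention $\|\nabla^2\bar f(x,y)-\nabla^2\bar f(x',y')\|\le\rho\sqrt{\|x-x'\|^2+\|y-y'\|^2}$, under which each block is $\sqrt{1+\kappa^2}\,\rho\le\sqrt{2}\kappa\rho$-Lipschitz and the total becomes $\sqrt{2}\kappa\rho(1+\kappa)^2\le 4\sqrt{2}\kappa^3\rho$. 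With that convention fixed, your argument goes through exactly as planned.
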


We formally introduce the \emph{perturbed restarted accelerated
gradient descent ascent} (\texttt{PRAGDA}) as in Algorithm~\ref{alg:PRAGDA}.
Utilizing the \texttt{PRAHGD} complexity result as in Theorems~\ref{thm:second-order} and~\ref{thm:T_bound2} together with Lemma~\ref{lem:minimax}, we can take $\tilde L = (\kappa+1)\ell$ and $\tilde\rho = 4\sqrt{2}\kappa^3\rho$ to conclude an improved oracle complexity upper bounds for finding second-order stationary points for this particular problem, indicated by the following result:

\begin{theorem}[Oracle complexity of \texttt{PRAGDA} finding $(\epsilon,O(\sqrt{\epsilon}))$-SOSP]\label{thm:minimax}
Under the settings of Lemma \ref{lem:minimax},  Algorithm~\ref{alg:PRAGDA} outputs an $\big(\epsilon,\fO(\kappa^{1.5}\sqrt{\epsilon}\,)\big)$-second-order stationary point of $\bar\Phi(x)$ in~\eqref{prob:minimax} within $\tilde\fO(\kappa^{1.75}\epsilon^{-1.75})$ gradient oracle calls.
\end{theorem}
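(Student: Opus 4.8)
\textbf{Proof plan for Theorem~\ref{thm:minimax}.}
The plan is to verify that the minimax problem~\eqref{prob:minimax} is a legitimate instance of the nonconvex-strongly-convex bilevel framework~\eqref{bilevel_ab} to which Theorem~\ref{thm:second-order} and Proposition~\ref{thm:T_bound2} apply, with the role of the lower-level objective $g$ played by $-\bar f$, and then to substitute the sharpened smoothness constants from Lemma~\ref{lem:minimax} into the complexity bounds, tracking how each $\kappa$-power changes. First I would observe that $g(x,y)=-\bar f(x,y)$ is $\mu$-strongly convex in $y$ and $\ell$-gradient-Lipschitz, so its condition number is again $\kappa=\ell/\mu$; since $\bar f$ is $\rho$-Hessian Lipschitz, $g$ inherits the Jacobian/Hessian Lipschitz constant $\rho$. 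Thus Assumption~\ref{asm:1} holds with the minimax data (the third-order condition (v) is not needed here because, as noted after~\eqref{prob:minimax}, no CG subroutine and no Hessian-vector products are invoked — the only place third derivatives entered was the analysis of the CG estimate $v_k$, which is now identically $v_k^*=0$). The key structural simplification, already pointed out in the text via Danskin's theorem, is that $\nabla_y f(x,y^*(x))=0$, so the inexact hypergradient~\eqref{equ:inexact hypergradient} collapses to $u_{t,k}=\nabla_x\bar f(w_{t,k},y_{t,k})$, which is exactly the update used in Algorithm~\ref{alg:PRAGDA}; hence \texttt{PRAGDA} is literally \texttt{PRAHGD} specialized to this case, and Condition~\ref{con:4.1} reduces to its first inequality $\|y_k-y^*(w_k)\|\le\sigma/(2\tilde L)$ alone.

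Next I would invoke Lemma~\ref{lem:minimax} to set $\tilde L=(\kappa+1)\ell=\fO(\kappa\ell)$ and $\tilde\rho=4\sqrt2\,\kappa^3\rho=\fO(\kappa^3\rho)$, in place of the generic $\tilde L=\fO(\kappa^3)$ and $\tilde\rho=\fO(\kappa^5)$ from Lemmas~\ref{lem:gradient_Lip} and~\ref{lem:Hessian_Lip}. Plugging these into Theorem~\ref{thm:second-order}: the outer-loop iteration count is $\fO\big(\Delta\tilde L^{0.5}\tilde\rho^{0.25}\chi^6\epsilon^{-1.75}\big)$, and $\tilde L^{0.5}\tilde\rho^{0.25}=\fO(\kappa^{0.5}\cdot\kappa^{0.75})=\fO(\kappa^{1.25})$, so the outer loop runs $\tilde\fO(\kappa^{1.25}\epsilon^{-1.75})$ times, and the SOSP guarantee becomes $\lambda_{\min}(\nabla^2\bar\Phi(\hat w))\ge-1.011\sqrt{\tilde\rho\epsilon}=-\fO(\kappa^{1.5}\sqrt\epsilon)$ with $\|\nabla\bar\Phi(\hat w)\|\le\epsilon$. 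Then Proposition~\ref{thm:T_bound2} (whose proof only uses the AGD convergence rate of Lemma~\ref{lem:AGD} and the strong convexity of $g(w_{t,k},\cdot)$, both available here) gives that each epoch's inner AGD iterations total $\fO(\kappa^{0.5}\fK\log(1/\epsilon))$; multiplying the per-epoch inner cost by the outer iteration count yields a gradient-oracle complexity of $\tilde\fO(\kappa^{0.5}\cdot\kappa^{1.25}\epsilon^{-1.75})=\tilde\fO(\kappa^{1.75}\epsilon^{-1.75})$. Since each AGD step and each outer step costs $\fO(1)$ gradient evaluations of $\bar f$, this is the claimed bound.

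The main obstacle I anticipate is not any single inequality but a bookkeeping/consistency issue: one must double-check that the parameter choices in Theorem~\ref{thm:second-order} ($\eta=1/(4\tilde L)$, $\theta=\tfrac12(\tilde\rho\epsilon\eta^2)^{1/4}$, $B$, $r$, $\sigma$, etc.) still satisfy the conditions required by that theorem's proof when $\tilde L,\tilde\rho$ take the smaller minimax values — in particular the standing assumption $\epsilon\le\tilde L^2/\tilde\rho$, which here reads $\epsilon\lesssim\ell^2/(\kappa\rho)$, and the requirement that AGD and CG initializations (warm-started from the previous iterate, e.g.\ $y_{t,k-1}$) keep the number of inner iterations $T_{t,k}$ at the stated $\fO(\kappa^{0.5}\log(1/\epsilon))$ order. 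The cleanest route is to state \texttt{PRAGDA} as an \emph{instantiation} of \texttt{PRAHGD} under the map $(f,g)\mapsto(\bar f,-\bar f)$ and simply remark that Theorems~\ref{thm:second-order},~\ref{thm:T_bound2} apply verbatim with $(\tilde L,\tilde\rho)=((\kappa+1)\ell,\,4\sqrt2\kappa^3\rho)$ — deferring only the routine verification of the parameter inequalities and the vanishing of the CG-related terms to the appendix — after which the $\kappa$-power arithmetic above finishes the proof.
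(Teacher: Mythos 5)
Your proposal is correct and follows essentially the same route as the paper's own proof: treat \texttt{PRAGDA} as \texttt{PRAHGD} instantiated with $(f,g)=(\bar f,-\bar f)$ and the CG step removed, substitute $\tilde L=(\kappa+1)\ell$ and $\tilde\rho=4\sqrt{2}\kappa^3\rho$ from Lemma~\ref{lem:minimax} into Theorem~\ref{thm:second-order} and Proposition~\ref{thm:T_bound2}, and carry out the same $\kappa$-power arithmetic $\tilde L^{1/2}\tilde\rho^{1/4}\kappa^{1/2}=\fO(\kappa^{1.75})$. Your additional remarks (Danskin's theorem collapsing the hypergradient, the redundancy of the third-order Lipschitz assumption, and the parameter-consistency check) are sensible refinements that the paper leaves implicit.
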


Prior to this work, the state-of-the-art algorithm was attained by the \emph{inexact minimax cubic Newton} (iMCN) method~\citep{luo2022finding}, which under comparable settings outputs an $\big(\epsilon,\fO(\kappa^{1.5}\sqrt{\epsilon}\,)\big)$-approximate SOSP within oracle quries of $\tilde\fO(\kappa^2\epsilon^{-1.5})$ gradients, $\tilde\fO(\kappa^{1.5}\epsilon^{-2})$ Hessian-vector products and $\tilde\fO(\kappa\epsilon^{-2})$ Jecobian-vector products.
We compare the query complexity upper bound of \texttt{PRAGDA} with iMCN in detail.
As can be observed, the total oracle complexity of \texttt{PRAGDA} is no worse than that of iMCN since $
\tilde\fO(\kappa^{1.75}\epsilon^{-1.75})
\leq
\tilde\fO(
\kappa^2\epsilon^{-1.5}
+
\kappa^{1.5}\epsilon^{-2}
)
$, a simple application of AM-GM inequality.
Moreover, \texttt{PRAGDA} only requires gradient oracle calls while iMCN additionally requires Hessian-vector and Jacobian-vector oracle calls.
To summarize, \texttt{PRAGDA} enjoys an oracle complexity that is no inferior than that of iMCN, whereas in both of the regimes $\kappa\gg \epsilon^{-1}$ and $\kappa\ll \epsilon^{-1}$ \texttt{PRAGDA}'s complexity is strictly superior.

\pb\section{Discussion}\label{sec_conclude}
We have presented the \emph{Restarted Accelerated HyperGradient Descent} (\texttt{RAHGD}) method for solving nonconvex-strongly-convex bilevel optimization problems.
Our accelerated method is able to find an $\epsilon$-first-order stationary point of the objective within $\tilde\fO(\kappa^{3.25}\epsilon^{-1.75})$ oracle complexity, where $\kappa$ is the condition number of the lower-level objective and $\epsilon$ is the desired accuracy.
Furthermore, we have proposed a perturbed variant of \texttt{RAHGD} for finding an $\big(\epsilon,\fO(\kappa^{2.5}\sqrt{\epsilon}\,)\big)$-second-order stationary point within the same order of oracle complexity up to a polylogarithmic factor.
As a byproduct, our algorithm variant \texttt{PRAGDA} improves upon the existing upper complexity bound for finding second-order stationary points in nonconvex-strongly-concave minimax optimization problems.
Important directions for future research include extending our results to the nonconvex-convex setting and stochastic setting in bilevel optimization and
also minimax optimization as its special case.

\bibliographystyle{plainnat}
\bibliography{ref}

\newpage\appendix

\pb\section{Basic Lemmas}\label{sec:basic lemmas}
In this section, we provide some basic lemmas.
\begin{lemma}
Suppose Assumption~\ref{asm:1} holds, then $y^*(x)$ is $\kappa$-Lipschitz continuous, that is, 
\begin{align*}    
\norm{y^*(x) - y^*(x')} \le \kappa \norm{x - x'}
\end{align*}
for any $x, x' \in \BR^{d_x}$.
\end{lemma}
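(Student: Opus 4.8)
The plan is to use the first-order optimality condition for the lower-level problem and the implicit function theorem, combined with strong convexity of $g(x,\cdot)$. Since $y^*(x)$ is the unique minimizer of $g(x,\cdot)$ and $g$ is differentiable, we have the stationarity identity $\nabla_y g(x, y^*(x)) = 0$ for all $x \in \BR^{d_x}$. Fix $x, x' \in \BR^{d_x}$ and write $y = y^*(x)$, $y' = y^*(x')$.

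The key step is to exploit $\mu$-strong convexity of $g(x',\cdot)$ together with the two stationarity conditions. By strong convexity of $g(x',\cdot)$, we have $\langle \nabla_y g(x', y) - \nabla_y g(x', y'), y - y' \rangle \ge \mu \|y - y'\|^2$. Since $\nabla_y g(x', y') = 0$, this reads $\langle \nabla_y g(x', y), y - y' \rangle \ge \mu \|y - y'\|^2$. Now I subtract the other stationarity condition $\nabla_y g(x, y) = 0$ to rewrite the left side as $\langle \nabla_y g(x', y) - \nabla_y g(x, y), y - y' \rangle \ge \mu \|y - y'\|^2$. Applying Cauchy--Schwarz and then the $\ell$-Lipschitz continuity of $\nabla g$ (Assumption~\ref{asm:1}(iii)), the left side is bounded by $\|\nabla_y g(x', y) - \nabla_y g(x, y)\| \cdot \|y - y'\| \le \ell \|x' - x\| \cdot \|y - y'\|$. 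Combining the two bounds and dividing by $\|y - y'\|$ (the case $y = y'$ being trivial) yields $\mu \|y - y'\| \le \ell \|x - x'\|$, i.e., $\|y^*(x) - y^*(x')\| \le (\ell/\mu)\|x - x'\| = \kappa \|x - x'\|$ by Definition~\ref{dfn:1}.

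I do not anticipate a serious obstacle here; the argument is a standard strong-monotonicity estimate. The only point requiring minor care is the degenerate case $y^*(x) = y^*(x')$, which is handled trivially since then the left-hand side of the desired inequality is zero. An alternative route via the implicit function theorem --- writing $\nabla y^*(x) = -(\nabla^2_{yy} g(x, y^*(x)))^{-1} \nabla^2_{yx} g(x, y^*(x))$ and bounding its spectral norm by $\ell/\mu = \kappa$ using Assumption~\ref{asm:1}, then integrating along the segment from $x$ to $x'$ --- would also work and is essentially equivalent, but the monotonicity argument above avoids invoking differentiability of $y^*$ and is cleaner.
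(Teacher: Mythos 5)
Your argument is correct, and it takes a genuinely different route from the paper. The paper proves this lemma by implicit differentiation: it differentiates the optimality condition $\nabla_y g(x,y^*(x))=0$ in $x$ to obtain $\frac{\partial y^*(x)}{\partial x} = -\big(\nabla_{yy}^2 g(x,y^*(x))\big)^{-1}\nabla_{yx}^2 g(x,y^*(x))$ and bounds the spectral norm of this Jacobian by $\ell/\mu=\kappa$ --- exactly the ``alternative route'' you sketch at the end. Your main argument instead combines the two stationarity identities with the strong monotonicity of $\nabla_y g(x',\cdot)$ and the $\ell$-Lipschitz continuity of $\nabla g$ in $x$, then divides through by $\|y-y'\|$. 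Each step checks out, including the degenerate case $y^*(x)=y^*(x')$. What your approach buys is that it needs only first-order data (strong convexity plus gradient Lipschitzness) and avoids invoking differentiability of $y^*$ or the invertibility and boundedness of the cross-derivative blocks; the paper's Jacobian computation, on the other hand, is reused downstream (e.g., in the gradient- and Hessian-Lipschitz estimates for $\Phi$), so the paper gets the explicit derivative formula ``for free'' as a byproduct of this lemma. Either proof is acceptable here; yours is the cleaner self-contained one for the Lipschitz claim alone.
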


\begin{proof}
Recall that 
\begin{align*}
y^*(x) = \arg\min_{y\in\mathbb{R}^{d_y}} g(x,y).
\end{align*}
The optimality condition leads to $\nabla_y g(x,y^*(x)) = 0$ for each $x\in\BR^{d_x}$.
By taking a further derivative with respect to $x$ on both sides and applying the chain rule~\citep{rudin1976principles}, we obtain
\begin{equation*}
\nabla_{yx}^2 g(x,y^*(x))
+
\nabla_{yy}^2 g(x,y^*(x))
\frac{\partial y^*(x)}{\partial x}
=0
.
\end{equation*}
The smoothness and strong convexity of $g$ in $y$ immediately indicate
\begin{align*}
\frac{\partial y^*(x)}{\partial x}
=
-(\nabla_{yy}^2 g(x,y^*(x)))^{-1} \nabla_{yx}^2 g(x,y^*(x))
\ .
\end{align*}
Thus we have
\begin{align*}
\norm{\frac{\partial y^*(x)}{\partial x}} 
&=
\norm{(\nabla_{yy}^2 g(x,y^*(x)))^{-1} \nabla_{yx}^2 g(x,y^*(x))  }
\le
\frac{\ell}{\mu} = \kappa
\ ,
\end{align*}
where the inequality is based on the fact that $g(x,y)$ is $\ell$-smooth with respect to $x$ and $y$ and $\mu$-strongly convex with respect to y for any $x$.

Therefore, we proved that $y^*(x)$ is $\kappa$-Lipschitz continuous.
\end{proof}
We also can show that $\Phi(x)$ admits Lipschitz continuous gradients and Lipschitz continuous Hessians, as shown in the following lemmas:
\begin{lemma}
\label{lem:gradient_Lip appd}
Suppose Assumption~\ref{asm:1} holds, then $\Phi(x)$ is $\tilde{L}$-gradient Lipschitz continuous, that is,
\begin{align*}   
\|\nabla \Phi(x) - \nabla \Phi(x')\|
\le
\tilde{L}\|x - x'\|
\end{align*}
for any $x, x' \in \BR^{d_x}$, where 
\begin{equation*}
\tilde L = \ell + \frac{2\ell^2+\rho M}{\mu}+
\frac{\ell^3 + 2\rho\ell M}{\mu^2}+
\frac{\rho\ell^2M}{\mu^3}\ .
\end{equation*}
\end{lemma}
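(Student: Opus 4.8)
\textbf{Proof proposal for Lemma~\ref{lem:gradient_Lip appd}.}

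The plan is to start from the closed-form expression for the hypergradient in Eq.~\eqref{equ:hypergradient} and bound the difference $\nabla\Phi(x)-\nabla\Phi(x')$ term by term. Write $\nabla\Phi(x) = \nabla_x f(x,y^*(x)) - \nabla^2_{xy} g(x,y^*(x))\,\big(\nabla^2_{yy} g(x,y^*(x))\big)^{-1}\nabla_y f(x,y^*(x))$, and similarly at $x'$. First I would introduce the shorthand $P(x) = \nabla^2_{xy} g(x,y^*(x))$, $H(x) = \big(\nabla^2_{yy} g(x,y^*(x))\big)^{-1}$, $q(x) = \nabla_y f(x,y^*(x))$, so that $\nabla\Phi(x) = \nabla_x f(x,y^*(x)) - P(x)H(x)q(x)$. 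The first term is handled directly by the $\ell$-Lipschitzness of $\nabla f$ (Assumption~\ref{asm:1}(iii)) combined with the $\kappa$-Lipschitzness of $y^*$ (the preceding lemma): $\|\nabla_x f(x,y^*(x)) - \nabla_x f(x',y^*(x'))\| \le \ell(\|x-x'\| + \|y^*(x)-y^*(x')\|) \le \ell(1+\kappa)\|x-x'\|$.

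The core of the argument is the standard ``product of three factors'' telescoping bound
\[
\|P(x)H(x)q(x) - P(x')H(x')q(x')\|
\le
\|q(x)\|\,\|H(x)\|\,\|P(x)-P(x')\|
+
\|q(x)\|\,\|P(x')\|\,\|H(x)-H(x')\|
+
\|P(x')\|\,\|H(x')\|\,\|q(x)-q(x')\|.
\]
For the uniform magnitude bounds I would use: $\|q(x)\| = \|\nabla_y f(x,y^*(x))\| \le M$ from Assumption~\ref{asm:1}(ii); $\|H(x)\| = \|(\nabla^2_{yy}g)^{-1}\| \le 1/\mu$ from $\mu$-strong convexity; $\|P(x)\| = \|\nabla^2_{xy}g\| \le \ell$ from $\ell$-smoothness. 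For the Lipschitz-in-$x$ bounds on each factor, I compose the relevant derivative-Lipschitz assumption with the $\kappa$-Lipschitzness of $y^*(x)$: $\|P(x)-P(x')\| \le \rho(1+\kappa)\|x-x'\|$ using Assumption~\ref{asm:1}(iv) on $\nabla^2_{xy}g$; $\|q(x)-q(x')\| \le \ell(1+\kappa)\|x-x'\|$ as above; and for $H$, use the resolvent identity $H(x)-H(x') = H(x)\big(\nabla^2_{yy}g(x',y^*(x')) - \nabla^2_{yy}g(x,y^*(x))\big)H(x')$, so $\|H(x)-H(x')\| \le (1/\mu^2)\cdot\rho(1+\kappa)\|x-x'\|$ using Assumption~\ref{asm:1}(iv) on $\nabla^2_{yy}g$.

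Assembling these pieces gives a bound of the form $\|\nabla\Phi(x)-\nabla\Phi(x')\| \le \big(\ell(1+\kappa) + M\cdot\tfrac1\mu\cdot\rho(1+\kappa) + M\ell\cdot\tfrac{\rho(1+\kappa)}{\mu^2} + \ell\cdot\tfrac1\mu\cdot\ell(1+\kappa)\big)\|x-x'\|$, which after expanding $1+\kappa \le 2\kappa$ (or keeping $1+\kappa$ exactly) and collecting powers of $1/\mu$ yields $\tilde L = \ell + \tfrac{2\ell^2 + \rho M}{\mu} + \tfrac{\ell^3 + 2\rho\ell M}{\mu^2} + \tfrac{\rho\ell^2 M}{\mu^3}$, matching the stated constant; this establishes $\tilde L = \fO(\kappa^3)$ since the dominant term is $\rho\ell^2 M/\mu^3 = \fO(\kappa^3)$ when $\rho, M$ are treated as $\fO(1)$. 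The main obstacle — really the only delicate point — is getting the bookkeeping of the three-factor telescoping inequality right and correctly propagating the $(1+\kappa)$ factors from $y^*$ through each composed Lipschitz estimate so that the final constant lands exactly as written; there is no conceptual difficulty, only care with the constants and the resolvent identity for $H$.
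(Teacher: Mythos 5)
Your proposal is correct and follows essentially the same route as the paper: the identical four-factor decomposition of the hypergradient, the three-term telescoping of the product $P H q$, the resolvent identity for $H(x)-H(x')$, and the same uniform bounds $\|P\|\le\ell$, $\|H\|\le 1/\mu$, $\|q\|\le M$, yielding exactly the paper's sum $\ell(1+\kappa)+\tfrac{\ell^2}{\mu}(1+\kappa)+\tfrac{\ell\rho M}{\mu^2}(1+\kappa)+\tfrac{M\rho}{\mu}(1+\kappa)$. The only caveat is that you must keep $1+\kappa=1+\ell/\mu$ exactly (not relax it to $2\kappa$) for the expansion to land on the stated constant $\tilde L$.
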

\begin{proof}
Recall that 
\begin{equation*}
\nabla\Phi(x)
=
\nabla_x f(x, y^*(x))
-\nabla^2_{xy} g(x, y^*(x)) \big(\nabla^2_{yy} g(x, y^*(x))\big)^{-1} \nabla_y f(x, y^*(x))\ .
\end{equation*}
We denote $\fH_1(x) = \nabla_x f(x, y^*(x))$, $\fH_2(x) = \nabla^2_{xy} g(x, y^*(x))$, $\fH_3(x) = \big(\nabla^2_{yy} g(x, y^*(x))\big)^{-1}$ and $\fH_4(x) = \nabla_y f(x, y^*(x))$, then 
\begin{equation*}
\nabla\Phi(x) = \fH_1(x) - \fH_2(x)\fH_3(x)\fH_4(x)\ .
\end{equation*}
We first consider $\fH_1(x),\ \fH_2(x)$ and $\fH_4(x)$. 
For any $x,\  x'\in\mathbb{R}^{d_x}$, we have
\begin{align*}
\|\fH_1(x) -  \fH_1(x')\|&\le \ell (\|x-x'\| + \|y^*(x) - y^*(x') \|)\\
&\le \ell (1 + \kappa) \|x-x' \|\ ,
\end{align*}
where we use triangle inequality in the first inequality and Lemma~\ref{lem:gradient_Lip appd} in the second one. 

We also have
\begin{align*}
\|\fH_2(x) -  \fH_2(x')\|&\le \rho (\|x-x'\| + \|y^*(x) - y^*(x') \|)\\
&\le \rho (1 + \kappa) \|x-x' \|
\end{align*}
and
\begin{align*}
\|\fH_4(x) -  \fH_4(x')\|&\le \ell (\|x-x'\| + \|y^*(x) - y^*(x') \|)\\
&\le \ell (1 + \kappa) \|x-x' \|.
\end{align*}
We then consider $\fH_3(x)$. For any $x,\  x'\in\mathbb{R}^{d_x}$, we have
\begin{align*}
&\quad\|\fH_3(x) - \fH_3(x')\| \\
&= \left\|\big(\nabla^2_{yy} g(x, y^*(x))\big)^{-1} - \big(\nabla^2_{yy} g(x', y^*(x'))\big)^{-1}\right\|\\
&\le \left\|\big(\nabla^2_{yy} g(x, y^*(x))\big)^{-1} \right\| 
\left\|\nabla^2_{yy} g(x', y^*(x')) - \nabla^2_{yy} g(x, y^*(x)) \right\|
\left\| \big(\nabla^2_{yy} g(x', y^*(x'))\big)^{-1} \right\|\\
&\le \frac{1}{\mu^2} \rho (\|x-x'\| + \|y^*(x) - y^*(x') \|)\\
&\le \frac{\rho(1+\kappa)}{\mu^2} \|x-x'\|\ .
\end{align*}
We also have 
\begin{equation*}
\|\fH_2(x) \|\le \ell,\quad \|\fH_3(x) \|\le \frac{1}{\mu}  \quad\text{and}\quad \|\fH_4(x)\|\le M.
\end{equation*}
for any $x\in\mathbb{R}^{d_x}$.
Then for any $x,\  x'\in\mathbb{R}^{d_x}$ we have
\begin{align*}
&\quad \|\nabla\Phi(x) - \nabla\Phi(x') \|\\
&\le \|\fH_1(x) - \fH_1(x')\| + \|\fH_2(x)\fH_3(x)\fH_4(x) - \fH_2(x')\fH_3(x')\fH_4(x') \|\\
&\le \ell(1+\kappa)\|x-x'\| + 
\|\fH_2(x)\fH_3(x)\fH_4(x) - \fH_2(x)\fH_3(x)\fH_4(x') \| \\ 
&\quad+ \|\fH_2(x)\fH_3(x)\fH_4(x') - \fH_2(x)\fH_3(x')\fH_4(x')\| \\
&
\quad + \| \fH_2(x)\fH_3(x')\fH_4(x') - \fH_2(x')\fH_3(x')\fH_4(x') \|
\\
&\le \ell(1+\kappa)\|x-x'\| + \|\fH_2(x)\|\|\fH_3(x)\| \|\fH_4(x) - \fH_4(x')\| \\
&\quad + \|\fH_2(x)\|\|\fH_4(x')\| \|\fH_3(x) - \fH_3(x')\| \\
&\quad+ \|\fH_3(x')\|\|\fH_4(x')\| \|\fH_2(x) - \fH_2(x')\|\\
&\le \ell(1+\kappa)\|x-x'\| +  \frac{\ell^2}{\mu}(1+\kappa)\|x-x'\| + \frac{\ell\rho M}{\mu^2}(1+\kappa) \|x-x'\|+ \frac{M\rho}{\mu}(1+\kappa)\|x-x'\| \\
&= \pr{\ell + \frac{2\ell^2+\rho M}{\mu}+
\frac{\ell^3 + 2\rho\ell M}{\mu^2}+
\frac{\rho\ell^2M}{\mu^3}}\|x-x'\|\ .
\end{align*}
\end{proof}
\begin{lemma}
\citep[Lemma 3.4]{huang2022efficiently}.
Suppose Assumption~\ref{asm:1} holds, then $\Phi(x)$ is $\tilde{\rho}$-Hessian Lipschitz continuous, that is,  
$\|\nabla^2\Phi(x) - \nabla^2\Phi(x')\| \le \tilde{\rho}\|x - x'\|$
for any~$x, x'\in\mathbb{R}^{d_x}$, where 
\begin{align*}
\tilde \rho &= \left[
\left(\rho + \frac{2\ell\rho + M\nu}{\mu} + 
\frac{2M\ell\nu+\rho\ell^2}{\mu^2}+
\frac{M\ell^2\nu}{\mu^3}
\right)
\left(1+\frac{\ell}{\mu} \right)\right.\\
&\left.
\quad+
\left( \frac{2\ell\rho}{\mu}
+\frac{4M\rho^2+2\ell^2\rho}{\mu^2}+
\frac{2M\ell\rho^2}{\mu^3}
\right)
\left(1+\frac{\ell}{\mu} \right)^2+
\left(
\frac{M\rho^2}{\mu^2}+
\frac{\rho\ell}{\mu}
\right)
\left(
1+\frac{\ell}{\mu}
\right)^3
\right]\ .
\end{align*}
\end{lemma}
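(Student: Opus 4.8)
The plan is to follow the template of the proof of Lemma~\ref{lem:gradient_Lip}: derive an explicit formula for $\nabla^2\Phi(x)$ by differentiating the hypergradient identity once more in $x$, bound the operator norm and the $x$-Lipschitz constant of every matrix-valued factor appearing in that formula, and then expand the difference $\nabla^2\Phi(x)-\nabla^2\Phi(x')$ by the product rule and collect terms. The starting point is the Jacobian of the lower-level solution, which from the proof of Lemma~\ref{lem:y*-Lip} equals $J(x):=\partial y^*(x)/\partial x = -\big(\nabla_{yy}^2 g(x,y^*(x))\big)^{-1}\nabla_{yx}^2 g(x, y^*(x))$ and satisfies $\|J(x)\|\le\kappa$; moreover the map $x\mapsto(x,y^*(x))$ is $(1+\kappa)$-Lipschitz by Lemma~\ref{lem:y*-Lip}. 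Writing $\Phi(x)=f(x,y^*(x))$ and applying the chain rule twice gives
\begin{align*}
\nabla^2\Phi(x)
=
\nabla_{xx}^2 f
+ \nabla_{xy}^2 f\, J
+ J^\top\nabla_{yx}^2 f
+ J^\top\nabla_{yy}^2 f\, J
+ \Big(\tfrac{\partial J^\top}{\partial x}\Big)\!\big[\nabla_y f\big],
\end{align*}
with all derivatives of $f$ evaluated at $(x,y^*(x))$ and the last summand the contraction of the third-order object $\partial J/\partial x$ against $\nabla_y f(x,y^*(x))$. Differentiating $J=-(\nabla_{yy}^2 g)^{-1}\nabla_{yx}^2 g$ along the curve $x\mapsto(x,y^*(x))$ and using the resolvent identity expresses $\partial J/\partial x$ as a sum of products of $(\nabla_{yy}^2 g)^{-1}$, the third-order derivatives $\nabla_{yyx}^3 g,\nabla_{yyy}^3 g,\nabla_{yxx}^3 g,\nabla_{yxy}^3 g$, and one further factor $J$; this is where the constant $\nu$ of Assumption~\ref{asm:1}(v) enters.

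Next I would assemble the elementary bounds, each immediate from Assumption~\ref{asm:1} together with the $(1+\kappa)$-Lipschitzness of $x\mapsto(x,y^*(x))$: every second-order block $\nabla_{xx}^2 f,\nabla_{xy}^2 f,\nabla_{yy}^2 f,\nabla_{yy}^2 g,\nabla_{yx}^2 g$ has norm at most $\ell$ and is $\rho(1+\kappa)$-Lipschitz in $x$; the inverse $\big(\nabla_{yy}^2 g(x,y^*(x))\big)^{-1}$ has norm at most $1/\mu$ and is $\tfrac{\rho(1+\kappa)}{\mu^2}$-Lipschitz (resolvent identity); $\nabla_x f$ and $\nabla_y f$ at $(x,y^*(x))$ have norm at most $M$ and are $\ell(1+\kappa)$-Lipschitz; and the third-order derivatives of $g$ have norm at most $\rho$ and are $\nu(1+\kappa)$-Lipschitz. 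Combining these through the product rule yields a Lipschitz constant of order $\rho(1+\kappa)^2/\mu$ for $J$, and then a norm bound and a $\nu$-dependent Lipschitz bound for $\partial J/\partial x$. Substituting all of these into the product-rule expansion of $\nabla^2\Phi(x)-\nabla^2\Phi(x')$ and grouping the resulting summands by powers of $(1+\kappa)=1+\ell/\mu$ exactly as in the statement reproduces the claimed expression for $\tilde\rho$; since $\kappa\ge1$ one then has $\tilde\rho=\fO(\kappa^5)$, matching Lemma~\ref{lem:Hessian_Lip}.

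The main obstacle is the bookkeeping rather than any single hard inequality. The block $J^\top\nabla_{yy}^2 f\,J$ is a product of three bounded-and-Lipschitz factors, and $\big(\tfrac{\partial J^\top}{\partial x}\big)\![\nabla_y f]$ is $\partial J/\partial x$ (itself a sum of products of up to three such factors) contracted with $\nabla_y f$, so the full product-rule expansion of $\nabla^2\Phi(x)-\nabla^2\Phi(x')$ has many summands, and keeping the constant sharp — not merely of order $\kappa^5$ — requires carefully tracking which power of $(1+\kappa)$ each summand carries. In particular one must check that the $\partial J/\partial x$ block contributes the $(1+\kappa)^3$-type term in the highest-order group of the displayed $\tilde\rho$: one factor of $(1+\kappa)$ from the Lipschitzness of $J$, one from differentiating along $x\mapsto(x,y^*(x))$, and one from an adjacent $\nabla_{yy}^2 f\cdot J$ (or $\nabla_{yx}^2 g\cdot J$) block. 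Alternatively, since this statement is quoted verbatim from \citet[Lemma 3.4]{huang2022efficiently}, one may simply invoke that reference.
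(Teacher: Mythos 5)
Your proposal is sound, but note that the paper itself does not prove this lemma at all: it is imported verbatim, constant and all, from \citet[Lemma 3.4]{huang2022efficiently}, so your closing remark ("one may simply invoke that reference") is in fact exactly what the paper does. Your derivation plan is the standard route and mirrors the template the paper \emph{does} execute for the gradient-Lipschitz constant in Lemma~\ref{lem:gradient_Lip appd}: write $\nabla\Phi(x)=\nabla_x f+J^\top\nabla_y f$ with $J=-(\nabla_{yy}^2g)^{-1}\nabla_{yx}^2g$, differentiate once more along $x\mapsto(x,y^*(x))$, and bound each factor's norm and Lipschitz constant before expanding by the product rule. All the ingredients you list are correct under Assumption~\ref{asm:1} ($\|J\|\le\kappa$, the $(1+\kappa)$-Lipschitzness of $x\mapsto(x,y^*(x))$, the resolvent bound $\rho(1+\kappa)/\mu^2$ for the inverse Hessian, the entry of $\nu$ only through $\partial J/\partial x$), and grouping by powers of $1+\ell/\mu$ is the right way to recover the displayed constant. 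What remains is, as you say, bookkeeping: your submission is a plan rather than an executed proof, so to stand on its own it would need the explicit expansion of $\partial J/\partial x$ and the term-by-term match against the three bracketed groups in $\tilde\rho$; otherwise the citation suffices, which is the choice the authors made.
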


\begin{lemma}[Inexact gradients]
Suppose Assumption~\ref{asm:1} and Condition~\ref{con:4.1} hold, then we have $$
\| \nabla\Phi(w_k)- \hat{\nabla}\Phi(w_k) \|_2 \le \sigma\ .
$$
\end{lemma}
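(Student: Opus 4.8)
The plan is to bound the difference between the true hypergradient
\[
\nabla\Phi(w_k)
=
\nabla_x f(w_k, y^*(w_k))
-\nabla^2_{xy} g(w_k, y^*(w_k)) \big(\nabla^2_{yy} g(w_k, y^*(w_k))\big)^{-1} \nabla_y f(w_k, y^*(w_k))
\]
and its inexact surrogate $\hat\nabla\Phi(w_k) = \nabla_x f(w_k, y_k) - \nabla^2_{xy} g(w_k, y_k) v_k$ by splitting the error into two parts: the error coming from using $y_k$ in place of $y^*(w_k)$, and the error coming from using $v_k$ in place of $v_k^* = (\nabla^2_{yy} g(w_k,y_k))^{-1}\nabla_y f(w_k,y_k)$. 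Using the triangle inequality,
\[
\|\nabla\Phi(w_k) - \hat\nabla\Phi(w_k)\|
\le
\underbrace{\|\nabla\Phi(w_k) - \Psi(w_k,y_k)\|}_{(\mathrm{I})}
+
\underbrace{\|\Psi(w_k,y_k) - \hat\nabla\Phi(w_k)\|}_{(\mathrm{II})},
\]
where $\Psi(w_k,y_k) = \nabla_x f(w_k,y_k) - \nabla^2_{xy}g(w_k,y_k)v_k^*$ is the ``exact-in-$v$, inexact-in-$y$'' hypergradient evaluated at $y_k$.

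For term (II), I would observe that $\Psi(w_k,y_k) - \hat\nabla\Phi(w_k) = \nabla^2_{xy}g(w_k,y_k)(v_k - v_k^*)$, so by Assumption~\ref{asm:1}(iii) (which gives $\|\nabla^2_{xy}g\|\le\ell$) and Condition~\ref{con:4.1} Eq.~\eqref{equ:cond2}, this is at most $\ell\cdot\frac{\sigma}{2\ell} = \frac{\sigma}{2}$.

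For term (I), the key point is that $\Psi(w_k,\cdot)$ is exactly the form obtained by substituting an arbitrary $y$ for $y^*(w_k)$ in the hypergradient formula~\eqref{equ:hypergradient}; in particular $\Psi(w_k,y^*(w_k)) = \nabla\Phi(w_k)$. The whole proof of Lemma~\ref{lem:gradient_Lip} (the $\tilde L$-gradient-Lipschitz lemma) in fact only uses Lipschitzness of the pieces $\fH_1,\dots,\fH_4$ in their arguments together with $\kappa$-Lipschitzness of $y^*$; running that same chain of triangle inequalities but replacing the perturbation $\|y^*(x)-y^*(x')\|\le\kappa\|x-x'\|$ by the direct perturbation $\|y_k - y^*(w_k)\|$ and keeping $w_k$ fixed yields that $y\mapsto\Psi(w_k,y)$ is Lipschitz with a constant no larger than $\tilde L$ (indeed the constant is $\tilde L$ with the $(1+\kappa)$ factors replaced by $1$, hence strictly smaller). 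Therefore $(\mathrm{I})\le \tilde L\,\|y_k - y^*(w_k)\|\le \tilde L\cdot\frac{\sigma}{2\tilde L} = \frac{\sigma}{2}$ by Eq.~\eqref{equ:cond1}. Adding the two halves gives $\|\nabla\Phi(w_k) - \hat\nabla\Phi(w_k)\|\le\sigma$.

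The main obstacle is making the bound on term (I) rigorous without re-deriving the whole computation of Lemma~\ref{lem:gradient_Lip}: one must verify carefully that every step of that lemma's proof (the bounds $\|\fH_2\|\le\ell$, $\|\fH_3\|\le 1/\mu$, $\|\fH_4\|\le M$, and the Lipschitz estimates, in particular the $1/\mu^2$ estimate for the inverse Hessian via the resolvent identity) goes through verbatim when $y^*(w_k)$ is replaced by a nearby but otherwise arbitrary point $y_k$ — this uses that $\mu$-strong convexity of $g(w_k,\cdot)$ holds at every $y$, not just at $y^*(w_k)$, so $\|(\nabla^2_{yy}g(w_k,y_k))^{-1}\|\le 1/\mu$ still holds. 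Once that is in place, the constant is bounded by $\tilde L$ and the rest is the two-line triangle-inequality argument above.
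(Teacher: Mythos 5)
Your proposal is correct and follows essentially the same route as the paper: the paper introduces the intermediate quantity $\bar{\nabla}\Phi(w_k)=\nabla_x f(w_k,y_k)-\nabla^2_{xy}g(w_k,y_k)\,v_k^*$ (your $\Psi(w_k,y_k)$), splits by the triangle inequality, bounds the $v$-error by $\ell\cdot\frac{\sigma}{2\ell}$ via Assumption~\ref{asm:1}(iii) and \eqref{equ:cond2}, and bounds the $y$-error by $\tilde L\cdot\frac{\sigma}{2\tilde L}$ via \eqref{equ:cond1}. Your extra care in justifying why the $\tilde L$-Lipschitz estimate transfers from perturbations of $x$ to perturbations of the $y$-argument (strong convexity holding at every $y$, not just $y^*(w_k)$) is a point the paper glosses over by simply citing Lemma~\ref{lem:gradient_Lip}, and is a welcome clarification rather than a deviation.
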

\begin{proof}
Recall that
\begin{equation*}
\nabla\Phi(x)
=
\nabla_x f(x, y^*(x))
-\nabla^2_{xy} g(x, y^*(x)) \big(\nabla^2_{yy} g(x, y^*(x))\big)^{-1} \nabla_y f(x, y^*(x))
\end{equation*}
and
\begin{align*}
\hat{\nabla} \Phi(x_k) =  \nabla_x f(x_{k}, y_{k}) - \nabla^2_{xy} g(x_{k}, y_{k}) v_{k}\ .
\end{align*}
We define
\begin{align*}
\bar{\nabla} \Phi(x_{k}) 
= \nabla_x f(x_{k}, y_{k}) - \nabla^2_{xy} g(x_{k}, y_{k}) \big(\nabla^2_{yy} g(x_{k}, y_{k})\big)^{-1} \nabla_y f(x_{k}, y_{k})\ ,
\end{align*}
then we have
\begin{align*}
||\nabla\Phi(w_k) - \hat{\nabla}\Phi(w_k) ||_2
& = ||\nabla\Phi(w_k) - \bar{\nabla}\Phi(w_k) + \bar{\nabla}\Phi(w_k) - \hat{\nabla}\Phi(w_k)||_2\\
&\le ||\nabla\Phi(w_k) - \bar{\nabla}\Phi(w_k)||_2 + ||\bar{\nabla}\Phi(w_k) - \hat{\nabla}\Phi(w_k)||_2\\
&\le \tilde{L} ||y_k - y^*(w_k)||_2 + \ell \left|\left|v_k - \big(\nabla^2_{yy} g(w_{k}, y_{k})\big)^{-1} \nabla_y f(w_{k}, y_{k})\right|\right|_2\\
&\le  \sigma\ ,
\end{align*}
where we use triangle inequality in the first inequality, Lemma~\ref{lem:gradient_Lip} and Assumption~\ref{asm:1}(c) in the second inequality and Condition~\ref{con:4.1} in the last one.
\end{proof}

\begin{lemma}
\citep[Lemmas 1 and 3]{luo2022finding}
Assume that $\bar{f}(x,y)$ is $\ell$-smooth, $\rho$-Hessian Lipschitz continuous with respect to $x$ and $y$ and $\mu$-strongly concave in $y$ but possibly nonconvex in $x$, then the objective $\bar\Phi(x)$ is $(\kappa+1)\ell$-smooth and $(4\sqrt{2}\kappa^3\rho)$-Hessian Lipschitz continuous. 
\end{lemma}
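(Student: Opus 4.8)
The plan is to follow the same template used for the bilevel objective in Lemmas~\ref{lem:gradient_Lip} and~\ref{lem:Hessian_Lip}, specialized to the minimax structure $g=-\bar f$, where the lower-level optimality condition $\nabla_y\bar f(x,y^*(x))=0$ drastically simplifies the hypergradient. First I would record the consequences of Danskin's theorem: since $\bar f(x,\cdot)$ is $\mu$-strongly concave, $y^*(x)=\argmax_y\bar f(x,y)$ is uniquely defined and $\nabla\bar\Phi(x)=\nabla_x\bar f(x,y^*(x))$. Differentiating the stationarity identity $\nabla_y\bar f(x,y^*(x))=0$ in $x$ and inverting $\nabla^2_{yy}\bar f$ (which is possible by strong concavity), the implicit function theorem gives $\partial y^*(x)/\partial x=-(\nabla^2_{yy}\bar f(x,y^*(x)))^{-1}\nabla^2_{yx}\bar f(x,y^*(x))$, whose operator norm is at most $\ell/\mu=\kappa$; hence $y^*$ is $\kappa$-Lipschitz, exactly as in Lemma~\ref{lem:y*-Lip}.

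For the smoothness claim I would chain the $\ell$-smoothness of $\bar f$ with the $\kappa$-Lipschitzness of $y^*$: for any $x,x'$, $\norm{\nabla\bar\Phi(x)-\nabla\bar\Phi(x')}=\norm{\nabla_x\bar f(x,y^*(x))-\nabla_x\bar f(x',y^*(x'))}\le\ell(\norm{x-x'}+\norm{y^*(x)-y^*(x')})\le(\kappa+1)\ell\norm{x-x'}$, which is the first assertion.

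For the Hessian-Lipschitz claim, differentiating $\nabla\bar\Phi(x)=\nabla_x\bar f(x,y^*(x))$ once more yields the Schur-complement form $\nabla^2\bar\Phi(x)=\nabla^2_{xx}\bar f(x,y^*(x))-\nabla^2_{xy}\bar f(x,y^*(x))\,(\nabla^2_{yy}\bar f(x,y^*(x)))^{-1}\,\nabla^2_{yx}\bar f(x,y^*(x))$. Writing $z(x)=(x,y^*(x))$, the map $x\mapsto z(x)$ is $\sqrt{1+\kappa^2}$-Lipschitz, so each of $\nabla^2_{xx}\bar f(z(\cdot))$, $\nabla^2_{xy}\bar f(z(\cdot))$, $\nabla^2_{yy}\bar f(z(\cdot))$ is $\rho\sqrt{1+\kappa^2}$-Lipschitz by the Hessian-Lipschitz hypothesis, and $(\nabla^2_{yy}\bar f(z(\cdot)))^{-1}$ is $(\rho\sqrt{1+\kappa^2}/\mu^2)$-Lipschitz via the resolvent identity $A^{-1}-B^{-1}=A^{-1}(B-A)B^{-1}$ together with $\norm{(\nabla^2_{yy}\bar f)^{-1}}\le 1/\mu$. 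Using the uniform bounds $\norm{\nabla^2_{xy}\bar f}\le\ell$ and $\norm{(\nabla^2_{yy}\bar f)^{-1}}\le 1/\mu$, a three-term telescoping of the product $\nabla^2_{xy}\bar f\,(\nabla^2_{yy}\bar f)^{-1}\,\nabla^2_{yx}\bar f$ gives $\norm{\nabla^2\bar\Phi(x)-\nabla^2\bar\Phi(x')}\le\rho\sqrt{1+\kappa^2}\,(1+2\kappa+\kappa^2)\norm{x-x'}=\rho\sqrt{1+\kappa^2}\,(1+\kappa)^2\norm{x-x'}$. Finally, using $\kappa\ge 1$ one has $\sqrt{1+\kappa^2}\le\sqrt 2\,\kappa$ and $(1+\kappa)^2\le 4\kappa^2$, so the constant is at most $4\sqrt 2\,\kappa^3\rho$, as claimed.

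The only points requiring care are the justification that $\bar\Phi$ is twice differentiable---which follows since the $\rho$-Hessian-Lipschitz hypothesis makes $\bar f$ a $C^2$ function and the implicit function theorem then makes $y^*$, hence $\nabla\bar\Phi$, differentiable---and the bookkeeping of the telescoping bound for the matrix product. Neither is a genuine obstacle; the argument is a lighter-weight version of the bilevel proofs of Lemmas~\ref{lem:gradient_Lip} and~\ref{lem:Hessian_Lip}, with all the cross terms involving $\nabla_y f$ and its derivatives absent thanks to $\nabla_y\bar f(x,y^*(x))=0$, which is precisely why the resulting constants $(\kappa+1)\ell$ and $4\sqrt2\,\kappa^3\rho$ are sharper than the generic $\fO(\kappa^3)$ and $\fO(\kappa^5)$.
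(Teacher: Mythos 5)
Your argument is correct and reconstructs essentially the same proof as the cited source \citep[Lemmas 1 and 3]{luo2022finding}, which is all the paper itself offers for this statement: Danskin plus the implicit function theorem give the $\kappa$-Lipschitzness of $y^*$ and the Schur-complement form of $\nabla^2\bar\Phi$, and your three-term telescoping recovers the exact constants $(\kappa+1)\ell$ and $\rho\sqrt{1+\kappa^2}\,(1+\kappa)^2\le 4\sqrt{2}\,\kappa^3\rho$. The only point worth flagging is that your factor $\sqrt{1+\kappa^2}$ presumes the Hessian-Lipschitz hypothesis is stated in the Euclidean norm of the joint variable $(x,y)$ (as in \citet{luo2022finding}) rather than the sum-of-norms convention used elsewhere in this paper's appendix, under which the same telescoping yields $(1+\kappa)^3\rho\le 8\kappa^3\rho$ instead.
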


\pb\section{Proofs for Section~\ref{sec_convergence}}\label{apd:sec3}
In this section, we provide the proofs for theorems in Section~\ref{sec_convergence}. We separate our proof into three parts.
We first prove that $\Phi(x)$ decrease at least $\fO(\epsilon^{3/2})$ in one epoch and thus the total number of epochs is bounded. Then we show that our \texttt{RAHGD} in Algorithm~\ref{alg:AHGD} can output an $\epsilon$-FOSP. Finally, we provide the oracle calls complexity analysis.

\pb\subsection{Proof of Theorem~\ref{thm:one-oder}}
\label{sec:proof_first_order}
To prove Theorem~\ref{thm:one-oder} we primarily consider the algorithmic behavior in a single epoch; the desired result hence follows by a simple recursive argument.
We omit the subscript $t$ for notation simplicity.
For each epoch except the final one, we have $1 \le \fK \le K$,
\begin{align}
&
\fK\sum_{i=0}^{\fK-1} \|x_{i-1}-x_i\|_2^2
>
B^2
, \label{2a}\\
&
\|x_k - x_0\|_2^2
\le
k\sum_{i=0}^{k-1}\|x_{i+1}-x_i\|_2^2
\le
B^2
,
&
\forall k < \fK
, \label{2b}\\
&
\|w_k-x_0\|_2
\le
\|x_k - x_0\|_2 + \|x_k - x_{k-1}\|_2
\le
2B
,
&
\forall k < \fK
, \label{2c}\\
&
\norm{w_k - w_{k-1}}
\le
2B
,
&
\forall k < \fK
, \label{2d}
\end{align}
where~\eqref{2d} can be proved by induction as follows.
For $k = 0$, we have
$$
\norm{w_0 - w_{-1}}
=
0
\le
2B
.
$$
For $k = 1$, we have
$$
\norm{w_1 - w_0}
=
\norm{(x_1 - x_0) + (1-\theta)(x_1-x_0)}
\le
2B
.
$$
For $k \ge 2$, we have
\begin{align*}
\norm{w_k - w_{k-1}}
&\le
(2-\theta)\norm{x_k-x_{k-1}} + (1-\theta)\norm{x_{k-1}-x_{k-2}}
\\&\le
2 \sqrt{2\norm{x_k-x_{k-1}}^2 + 2\norm{x_{k-1} - x_{k-2}}^2}
\le
2B
.
\end{align*}

\noindent
In the last epoch, the “if condition” does not trigger and the while loop breaks until $k = K$.
Hence, we have
\begin{align}
&
\|x_k - x_0\|_2^2
\le
k\sum_{i=0}^{k-1} \|x_{i+1} - x_i\|_2^2
\le
B^2
,
&
\forall k \le K
, \label{3a}
\\&
\|w_k - x_0\|_2
\le
2B
,
&
\forall k \le K
. \label{3b}
\end{align}

In the forthcoming, we first prepare five introductory lemmas, namely Lemmas~\ref{lem:3.2}---~\ref{lem:epoch_num1}, and then prove our main Theorem~\ref{thm:one-oder}. Lemma~\ref{lem:3.2} portrays how $\Phi(x)$ decreases within an epoch in the case of large gradients. When gradient is small, we utilize quadratic functions to approximate $\Phi(x)$ as shown in Lemma~\ref{lem:3.3} and Lemma~\ref{lem:3.4} and then further we introduce Lemma~\ref{lem:3.5} which illustrates how $\Phi(x)$ decrease in the small gradient case. Combining Lemma~\ref{lem:3.2} and Lemma~\ref{lem:3.5} we can introduce Lemma~\ref{lem:epoch_num1} which shows that the total number of epochs is bounded since $\Phi(x)$ is bounded below. Finally we are ready to prove Theorem~\ref{thm:one-oder}. Figure~\ref{fig:proof_sketch} shows a pictoria description of our proving process.
\begin{figure}[!tb]
    \centering
     \includegraphics[width=11cm]{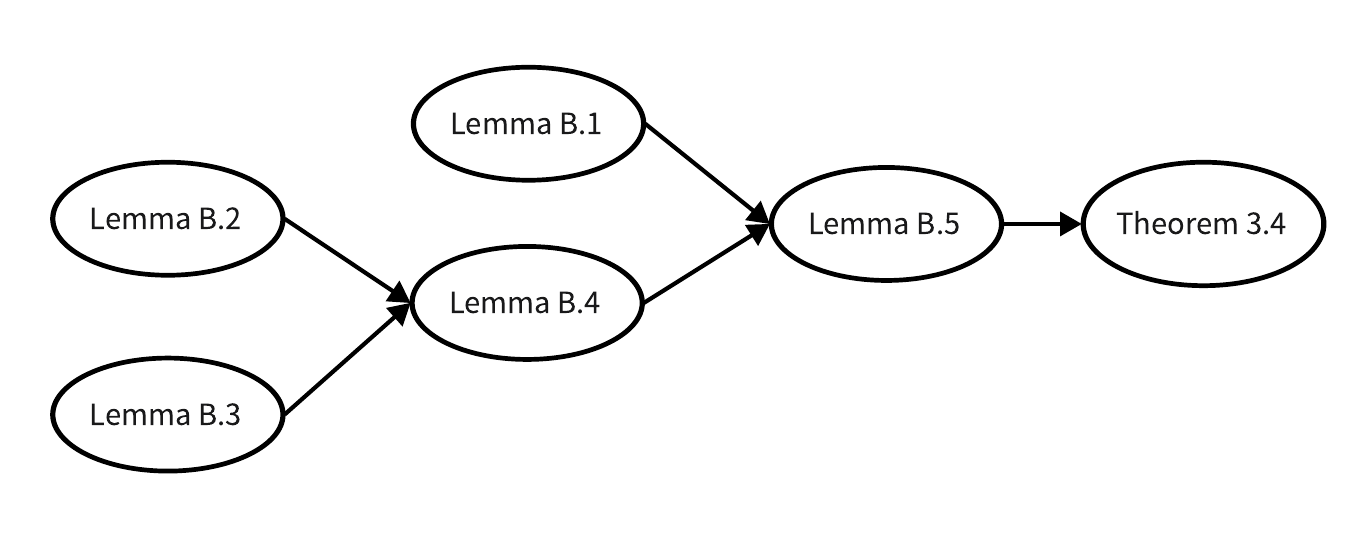}
    \caption{Proof process for Theorem~\ref{thm:one-oder}}
    \label{fig:proof_sketch}
\end{figure}

We first consider the case when $\|\nabla \Phi(w_{\fK-1})\|_2$ is large in the following lemma. 
\begin{lemma}
\label{lem:3.2}
Suppose that Assumption~\ref{asm:1} and Condition~\ref{con:4.1} hold.
Let $\eta \le \frac{1}{4\tilde{L}}$ and $0 \le \theta \le 1$.
When the “if condition” triggers and $\|\nabla \Phi(w_{\fK-1})\|_2 > \frac{B}{\eta}$, we have
$$
\Phi(x_\fK) - \Phi(x_0)
\le
- \frac{B^2}{4\eta}+ \sigma B  + \frac{5\eta\sigma^2\fK}{8}
\ .
$$
\end{lemma}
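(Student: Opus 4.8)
\textbf{Proof proposal for Lemma~\ref{lem:3.2}.}
The plan is to run the standard potential-function argument for restarted AGD on a nonconvex $\tilde L$-smooth objective, but carried out with the \emph{inexact} hypergradient $u_k$ in place of $\nabla\Phi(w_k)$, and then exploit the triggering condition $\fK\sum_{i=0}^{\fK-1}\|x_{i+1}-x_i\|^2>B^2$ together with the large-gradient hypothesis $\|\nabla\Phi(w_{\fK-1})\|_2>B/\eta$ to extract the claimed $\Theta(B^2/\eta)$ decrease. First I would set up the telescoped estimate: by $\tilde L$-smoothness of $\Phi$ and the update $x_{k+1}=w_k-\eta u_k$,
\begin{align*}
\Phi(x_{k+1})
\le
\Phi(w_k) + \inner{\nabla\Phi(w_k)}{x_{k+1}-w_k} + \frac{\tilde L}{2}\norm{x_{k+1}-w_k}^2 ,
\end{align*}
and then split $\inner{\nabla\Phi(w_k)}{x_{k+1}-w_k} = \inner{u_k}{x_{k+1}-w_k} + \inner{\nabla\Phi(w_k)-u_k}{x_{k+1}-w_k}$, where the second term is controlled using Lemma~\ref{lem:bias of gradient}, i.e.\ $\norm{\nabla\Phi(w_k)-u_k}\le\sigma$, giving a $\sigma\norm{x_{k+1}-w_k}$ slack. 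The momentum term $w_k = x_k+(1-\theta)(x_k-x_{k-1})$ should be handled by also relating $\Phi(w_k)$ back to $\Phi(x_k)$ (again via smoothness), so that the whole telescope collapses to $\Phi(x_\fK)-\Phi(x_0)$ on the left.

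The next step is bookkeeping on the quadratic terms. With $\eta\le 1/(4\tilde L)$ the curvature term $\frac{\tilde L}{2}\norm{x_{k+1}-w_k}^2 = \frac{\tilde L\eta^2}{2}\norm{u_k}^2$ is dominated by the $-\eta\norm{u_k}^2$ coming from $\inner{u_k}{x_{k+1}-w_k}=-\eta\norm{u_k}^2$, leaving something like $-\frac{\eta}{2}\norm{u_k}^2$ per step, plus the momentum cross-terms which (after a discrete-time Lyapunov manipulation of the form used in \citet{li2022restarted}) should be absorbable. Summing $k=0,\dots,\fK-1$, I expect to arrive at a bound of the shape
\begin{align*}
\Phi(x_\fK)-\Phi(x_0)
\le
-c_1\,\eta\sum_{k=0}^{\fK-1}\norm{u_k}^2 + \sigma\sum_{k=0}^{\fK-1}\norm{x_{k+1}-x_k} + c_2\,\eta\sigma^2\fK ,
\end{align*}
where $\sum\norm{u_k}^2$ is comparable to $\eta^{-2}\sum\norm{x_{k+1}-x_k}^2$. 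Now invoke the triggering inequality $\fK\sum_{i=0}^{\fK-1}\norm{x_{i+1}-x_i}^2>B^2$, together with Cauchy--Schwarz $\sum\norm{x_{k+1}-x_k}\le\sqrt{\fK\sum\norm{x_{k+1}-x_k}^2}$, to convert $\sum\norm{x_{k+1}-x_k}^2 > B^2/\fK$ into the leading term $-B^2/(4\eta)$ and the middle term into $\le\sigma B$ (using that the sum of increments is itself $\le B$ by \eqref{2b}, or more carefully $\sqrt{\fK}\cdot\sqrt{\sum\norm{x_{k+1}-x_k}^2}$ with the trigger bound). The large-gradient hypothesis $\|\nabla\Phi(w_{\fK-1})\|>B/\eta$ is what guarantees the trigger actually fires with enough "mass" in the last step so that the $-B^2/(4\eta)$ term survives with the stated constant; I would use it to lower-bound $\norm{u_{\fK-1}}\ge\norm{\nabla\Phi(w_{\fK-1})}-\sigma > B/\eta-\sigma$, hence $\eta^2\norm{u_{\fK-1}}^2$ is of order $B^2$, pinning down the constant $1/4$ after collecting the $\sigma B$ and $\eta\sigma^2\fK$ remainders.

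The main obstacle I anticipate is the momentum/Lyapunov bookkeeping: a naive per-step descent inequality does \emph{not} hold for AGD in the nonconvex regime, so the decrease must be shown for the \emph{whole} epoch using a potential function of the form $\Phi(x_k) + \frac{c}{\eta}\norm{x_k-x_{k-1}}^2$ (or the Hamiltonian-type quantity in \citet{jin2018accelerated,li2022restarted}), and one has to check that the boundary terms of this potential at $k=0$ and $k=\fK$ are benign — here $\norm{x_0-x_{-1}}$ and the analogous quantity, which are zero or small by the restart initialization $x_{-1}\leftarrow x_0$. Threading the inexactness $\sigma$ through this potential argument without letting the error terms blow up (keeping them at the advertised $\sigma B + \frac{5}{8}\eta\sigma^2\fK$ level rather than something worse) is the delicate part; everything else is routine smoothness algebra and an application of the triggering condition.
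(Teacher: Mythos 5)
Your proposal follows essentially the same route as the paper's proof: a per-step smoothness estimate around $w_k$ with the inexact hypergradient (its bias controlled by Lemma~\ref{lem:bias of gradient}), a telescoping/potential argument whose leftover positive momentum residual $\tfrac{1}{8\eta}\sum_k\|x_{k+1}-x_k\|^2$ and the linear-in-$\sigma$ term are both bounded via the pre-trigger inequality $k\sum_{i<k}\|x_{i+1}-x_i\|^2\le B^2$ for $k<\fK$, and the large-gradient hypothesis applied at $k=\fK-1$ to produce the dominant negative term. One small correction to your middle paragraph: the $-B^2/(4\eta)$ does \emph{not} come from the triggering inequality $\fK\sum_i\|x_{i+1}-x_i\|^2>B^2$ (that lower bound is only used in the small-gradient case and would cost an extra factor of $\fK$); as you correctly note at the end, it comes entirely from $\|\nabla\Phi(w_{\fK-1})\|>B/\eta$ applied to the single retained term $-\tfrac{3\eta}{8}\|\nabla\Phi(w_{\fK-1})\|^2\le-\tfrac{3B^2}{8\eta}$, which dominates the $+\tfrac{B^2}{8\eta}$ residual.
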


\begin{proof}
Since $\Phi(x)$ has $\tilde{L}$-Lipschitz continuous gradient, we have
\begin{align*}
\Phi(x_{k+1}) &\le \Phi(w_k) +\inner {\nabla\Phi(w_k)}{ x_{k+1}-w_k} + \frac{\tilde{L}}{2}\|x_{k+1}-w_k\|_2^2\\
&\le \Phi(w_k) - \eta\inner{\nabla\Phi(w_k)}{\hat{\nabla}\Phi(w_k)} + \frac{\eta}{8}\|\hat{\nabla}\Phi(w_k) \|_2^2\ ,
\end{align*}
where we use $\eta \le \frac{1}{4\tilde{L}}$.
We also have
\begin{align*}
\Phi(x_k) \ge \Phi(w_k) + \inner{\nabla\Phi(w_k)}{x_k - w_k} - \frac{\tilde{L}}{2} \|x_k - w_k\|_2^2\ .
\end{align*}
Combining above inequalities leads to
\begin{align*}
&\quad\Phi(x_{k+1}) - \Phi(x_k)\\
&\le 
-\inner{\nabla\Phi(w_k)}{x_k-w_k} + \frac{\tilde{L}}{2} \|x_k - w_k\|_2^2 - \eta\inner{\nabla\Phi(w_k)}{\hat{\nabla}\Phi(w_k)} + \frac{\eta}{8}\|\hat{\nabla}\Phi(w_k) \|_2^2
\\&=
\frac{1}{\eta}\inner{x_{k+1} - w_k}{x_k - w_k} + \inner{\hat{\nabla}\Phi(w_k) - \nabla\Phi(w_k)}{x_k - w_k} + \frac{\tilde{L}}{2} \|x_k - w_k\|_2^2
\\&\qquad \qquad
- \eta\inner{\nabla\Phi(w_k)}{\hat{\nabla}\Phi(w_k)} + \frac{\eta}{8}\|\hat{\nabla}\Phi(w_k) \|_2^2
\\&=
\frac{1}{2\eta}\pr{\|x_{k+1}-w_k\|_2^2 + \|x_k - w_k \|_2^2 - \|x_{k+1}-x_k\|_2^2} + \inner{\hat{\nabla}\Phi(w_k) - \nabla\Phi(w_k)}{x_k - w_k}
\\&\qquad\qquad
+ \frac{\tilde{L}}{2} \|x_k - w_k\|_2^2 
- \eta\inner{\nabla\Phi(w_k)}{\hat{\nabla}\Phi(w_k)} + \frac{\eta}{8}\|\hat{\nabla}\Phi(w_k) \|_2^2
\\&\overset{({\rm a})}{\le}
\frac{5}{8\eta} \|x_k-w_k\|_2^2 - \frac{1}{2\eta}\|x_{k+1}-x_k\|_2^2 +  \inner{\hat{\nabla}\Phi(w_k) - \nabla\Phi(w_k)}{x_k - w_k}+\frac{5\eta}{8}\|\hat{\nabla}\Phi(w_k) \|_2^2
\\&\qquad\qquad
- \eta\inner{\nabla\Phi(w_k)}{\hat{\nabla}\Phi(w_k)}
\\&\overset{({\rm b})}{\le}
\frac{5}{8\eta} \|x_k-x_{k-1}\|_2^2 - \frac{1}{2\eta}\|x_{k+1}-x_k\|_2^2 + \|\hat{\nabla}\Phi(w_k) - \nabla\Phi(w_k)\|_2\cdot\|x_k - x_{k-1}\|_2
\\&\qquad\qquad
+\frac{5\eta}{8}\|\hat{\nabla}\Phi(w_k) \|_2^2- \eta\inner{\nabla\Phi(w_k)}{\hat{\nabla}\Phi(w_k)}
\\&=
\frac{5}{8\eta} \|x_k-x_{k-1}\|_2^2 - \frac{1}{2\eta}\|x_{k+1}-x_k\|_2^2 + \|\hat{\nabla}\Phi(w_k) - \nabla\Phi(w_k)\|_2 \cdot\|x_k - x_{k-1}\|_2
\\&\qquad\qquad
+ \frac{5\eta}{8}\|\hat{\nabla}\Phi(w_k) \|_2^2-\frac{\eta}{2}\pr{\norm{\nabla\Phi(w_k)}^2 + \norm{\hat{\nabla}\Phi(w_k)}^2 - \norm{\nabla\Phi(w_k) - \hat{\nabla}\Phi(w_k)}^2}
\\&\overset{({\rm c})}{\le}
\frac{5}{8\eta} \|x_k-x_{k-1}\|_2^2 - \frac{1}{2\eta}\|x_{k+1}-x_k\|_2^2 + \|\hat{\nabla}\Phi(w_k) - \nabla\Phi(w_k)\|_2 \cdot\|x_k - x_{k-1}\|_2
\\&\qquad\qquad
- \frac{3\eta}{8}\norm{\nabla\Phi(w_k)}^2  + \frac{5\eta}{8}\norm{\nabla\Phi(w_k) - \hat{\nabla}\Phi(w_k)}^2\\
&\overset{({\rm d})}{\le
}\frac{5}{8\eta} \|x_k-x_{k-1}\|_2^2 - \frac{1}{2\eta}\|x_{k+1}-x_k\|_2^2 - \frac{3\eta}{8}\norm{\nabla\Phi(w_k)}^2+ \sigma\|x_k - x_{k-1}\|_2 + \frac{5\eta}{8}\sigma^2
\ ,
\end{align*}
where we use $\tilde{L} \le \frac{1}{4\eta}$ in $\overset{({\rm a})}{\le}$, $\norm{x_k - w_k} = (1-\theta) \norm{x_k - x_{k-1}}\le \norm{x_k - x_{k-1}}$ in $\overset{({\rm b})}{\le}$, triangle inequality in $\overset{({\rm c})}{\le}$ and Lemma~\ref{lem:bias of gradient} in $\overset{({\rm d})}{\le}$.

Summing over above inequality with $k = 0,1,\cdots, \fK-1$ and using $x_0 = x_{-1},$ we have
\begin{align}
&\quad\Phi(x_\fK) - \Phi(x_0)\\
&\le
\frac{1}{8\eta}\sum_{k=0}^{\fK-2}\norm{x_{k+1} - x_k}^2 - \frac{3\eta}{8}\sum_{k=0}^{\fK-1}\norm{\nabla\Phi(w_k)}^2 +\sigma\sum_{k=0}^{\fK-1}\norm{x_k - x_{k-1}} + \frac{5\eta\sigma^2\fK}{8}
\\&\overset{({\rm e})}{\le}
\frac{1}{8\eta}\sum_{k=0}^{\fK-2}\norm{x_{k+1} - x_k}^2 - \frac{3\eta}{8}\sum_{k=0}^{\fK-1}\norm{\nabla\Phi(w_k)}^2 +\sigma\sqrt{\fK - 1}\sqrt{\sum_{k=0}^{\fK-2}\norm{x_{k+1} - x_{k}}^2} + \frac{5\eta\sigma^2\fK}{8}
\\&\overset{({\rm f})}{\le}
\frac{B^2}{8\eta} - \frac{3\eta}{8} \norm{\nabla \Phi(w_{\fK-1})}^2 + \sigma B + \frac{5\eta\sigma^2\fK}{8} \label{equ:gradient bound}
\\& \overset{({\rm g})}{\le}
-\frac{B^2}{4\eta} + \sigma B + \frac{5\eta\sigma^2\fK}{8}\ ,
\end{align}
where we use Cauchy-Schwarz inequality in $\overset{({\rm e})}{\le}$, the “if condition” in $\overset{({\rm f})}{\le}$ and $\|\nabla \Phi(w_{\fK-1})\|_2 > \frac{B}{\eta}$ in~$\overset{({\rm g})}{\le}$.
\end{proof}

Now we consider the case when $\norm{\nabla\Phi(w_{\fK-1})}$ is small.

If $\norm{\nabla\Phi(w_{\fK-1})} \le \frac{B}{\eta}$, then \eqref{2c} and Lemma~\ref{lem:bias of gradient} lead to
\begin{align*}
\norm{x_\fK - x_0}
&\le
\norm{w_{\fK-1} - x_0} + \eta\norm{\nabla\Phi(w_{\fK-1})} + \eta \norm{\hat{\nabla}\Phi(w_{\fK-1}) - \nabla\Phi(w_{\fK-1})}
\\&\le
3B + \eta \sigma
.
\end{align*}
For epoch initialized at $x_0$, we denote $\mH = \nabla^2\Phi(x_0)$, and eigen-decompose it as $\mH = \mU {\bf\Lambda} \mU^\top$ with ${\bf\Lambda}\in\BR^{d_x\times d_x}$ being orthogonal and $\mU\in \BR^{d_x\times d_x}$ being diagonal.
Let $\lambda_j$ be the $j$-th eigenvalue of $\mH$. 
We denote $\tilde{x} = \mU^Tx,\tilde{w} = \mU^Tw$ and $\tilde{\nabla}\Phi(w) = \mU^T\nabla\Phi(w)$. 
Let $\tilde{x}^j$ and $\tilde{\nabla}^j\Phi(w)$ be the $j$-th coordinate of $\tilde{x}$ and $\tilde{\nabla}\Phi(w)$, respectively. 
Since $\Phi$ admits $\tilde{\rho}$-Lipschitz continuous Hessian, we have
\begin{equation}
\begin{aligned}
\Phi(x_\fK) - \Phi(x_0)
&\le
\inner{\nabla\Phi(x_0)}{x_\fK-x_0} + \frac{1}{2}(x_\fK - x_0)^\top\mH (x_\fK-x_0) + \frac{\tilde{\rho}}{6}\norm{x_\fK-x_0}^3
\\&=
\inner{\tilde{\nabla}\Phi(x_0)}{\tilde{x}_\fK - \tilde{x}_0} + \frac{1}{2} (\tilde{x}_\fK- \tilde{x}_0)^\top{\bf\Lambda}(\tilde{x}_\fK - \tilde{x}_0) + \frac{\tilde{\rho}}{6}\norm{x_\fK-x_0}^3
\\&\le
\phi(\tilde{x}_\fK) - \phi(\tilde{x}_0) + \frac{\tilde{\rho}}{6}(3B + \eta \sigma)^3
,
\end{aligned}\label{6a}
\end{equation}
where we denote
$$
\phi(x)
=
\inner{\tilde{\nabla}\Phi(x_0)}{x - \tilde{x}_0} + \frac{1}{2} (x - \tilde{x}_0)^\top{\bf\Lambda}(x - \tilde{x}_0)
,
$$
and
$$
\phi_j(x)
=
\inner{\tilde{\nabla}^j\Phi(x_0)}{x - \tilde{x}_0^j} + \frac{1}{2} \lambda_j(x - \tilde{x}_0^j)^2
.
$$
Let
\begin{align*}
\tilde{\delta}_k^j
=
(\mU^\top\hat{\nabla}\Phi(w_k))^j - \nabla \phi_j(\tilde{w}_k^j)
\quad\text{and} \quad
\tilde{\delta}_k
=
\mU^\top\hat{\nabla}\Phi(w_k) - \nabla\phi(\tilde{w}_k)
\ ,
\end{align*}
then the iteration of the algorithm means
\begin{align}
&
\tilde{w}_k^j
=
\tilde{x}_k^j + (1-\theta)(\tilde{x}_k^j - \tilde{x}_{k-1}^j)
, \label{6b}
\end{align}
and
\begin{align}
&
\tilde{x}_{k+1}^j
=
\tilde{w}_k^j - \eta (\mU^\top\hat{\nabla}\Phi(w_k))^j
=
\tilde{w}_k^j - \eta \nabla\phi_j(\tilde{w}_k^j) - \eta \tilde{\delta}_k^j
.
\label{6c}
\end{align}
For any $k < \fK$, we can bound $\|\tilde{\delta}_k\|_2$ as follows
\begin{align*}
\|\tilde{\delta}_k\| 
&=
\norm{\mU^\top\hat{\nabla}\Phi(w_k) - \tilde{\nabla} \Phi(w_k) + \tilde{\nabla}\Phi(w_k) - \nabla\phi(\tilde{w}_k)}
\\&\le
\norm{\tilde{\nabla}\Phi(w_k) - \tilde{\nabla}\Phi(x_0) - {\bf\Lambda}(\tilde{w}_k - \tilde{x}_0)} + \norm{\mU^\top\hat{\nabla}\Phi(w_k) - \tilde{\nabla} \Phi(w_k)}
\\&=
\norm{\nabla \Phi(w_k) - \nabla\Phi(x_0) - \mH (w_k-x_0)} + \norm{\hat{\nabla}\Phi(w_k) - \nabla\Phi(w_k)}
\\&\le
\norm{\int_0^1\inner{\nabla^2\Phi(x_0 + t(w_k-x_0)) - \mH}{w_k-x_0}\,{\rm d}t} + \sigma
\\&\le
\frac{\tilde{\rho}}{2} \norm{w_k-x_0}^2 + \sigma
\\&\le
2\tilde{\rho}B^2 + \sigma
,
\end{align*}
where the first inequality uses triangle inequality, the second one is based on Lemma~\ref{lem:bias of gradient}, the third one is based on the Lipschitz continuity of Hessian, and the last one uses~\eqref{2c}.
\medskip
\\
Notice that quadratic function $\phi(x)$ equals to the sum of $d_x$ scalar functions $\phi_j(x^j)$.
Then we decompose $\phi(x)$ into~$\sum_{j \in S_1}\phi_j(x^j)$ and $\sum_{j \in S_2}\phi_j(x^j)$, where
$$
S_1 = \left\{j:\lambda_j \ge -\frac{\theta}{\eta} \right\}
\qquad\text{and}\qquad
S_2 = \left\{j:\lambda_j < -\frac{\theta}{\eta} \right\}
\ . 
$$
We first consider the term $\sum_{j\in S_1}\phi_j(x^j)$ in the following lemma.
\begin{lemma}
\label{lem:3.3}
Suppose that Assumption~\ref{asm:1} and Condition~\ref{con:4.1} hold.
Let $\eta \le \frac{1}{4\tilde{L}}$ and $0 \le \theta \le 1$.
When the “if condition” triggers and $\|\nabla \Phi(w_{\fK-1})\|_2 \le \frac{B}{\eta}$, then we have
\begin{equation}
\sum_{j\in S_1}\phi_j(\tilde{x}^j_\fK)
-
\sum_{j\in S_1}\phi_j(\tilde{x}_0^j)
\le
-
\sum_{j \in S_1}\frac{3\theta}{8\eta}\sum_{k=0}^{\fK-1}|\tilde{x}_{k+1}^j-\tilde{x}_k^j|^2
+
\frac{2\eta\fK}{\theta}(2\tilde{\rho}B^2+\sigma)^2
\ .\label{7a} 
\end{equation}
\end{lemma}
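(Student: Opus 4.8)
After the orthogonal change of variables $\tilde x=\mU^\top x$, both the quadratic $\phi(x)=\sum_j\phi_j(x^j)$ and the iteration decouple completely across coordinates: by the eigentransformed recursions~\eqref{6b}--\eqref{6c}, the $j$-th coordinate $\tilde x^j_k$ performs a one-dimensional inexact accelerated gradient step on the scalar quadratic $\phi_j$ (whose second derivative is the constant $\lambda_j$) with additive perturbation $\tilde\delta^j_k$. Hence it suffices to prove a one-coordinate version of~\eqref{7a} for each fixed $j\in S_1$ and then sum over $S_1$.

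For fixed $j\in S_1$ I would run a potential-function (Hamiltonian) argument, in the spirit of the proof of Lemma~\ref{lem:3.2} but arranged so that it telescopes cleanly. Set
\[
\mathcal{E}^j_k\;\triangleq\;\phi_j(\tilde x^j_k)\;+\;\frac{1-\theta}{2\eta}\big(\tilde x^j_k-\tilde x^j_{k-1}\big)^2 ,
\]
and the heart of the argument is the one-step estimate
\[
\mathcal{E}^j_{k+1}\;\le\;\mathcal{E}^j_k\;-\;\frac{3\theta}{8\eta}\big(\tilde x^j_{k+1}-\tilde x^j_k\big)^2\;+\;\frac{2\eta}{\theta}\big(\tilde\delta^j_k\big)^2,\qquad 0\le k<\fK .
\]
To obtain it, I would expand $\phi_j$ exactly about $\tilde w^j_k$ (the second-order Taylor expansion is exact since $\phi_j''\equiv\lambda_j$), substitute $\tilde x^j_{k+1}-\tilde w^j_k=-\eta(\nabla\phi_j(\tilde w^j_k)+\tilde\delta^j_k)$ and $\tilde w^j_k-\tilde x^j_k=(1-\theta)(\tilde x^j_k-\tilde x^j_{k-1})$, and collect terms; this yields the exact identity
\[
\phi_j(\tilde x^j_{k+1})-\phi_j(\tilde x^j_k)=\Big(\tfrac1\eta-\lambda_j\Big)(1-\theta)\big(\tilde x^j_k-\tilde x^j_{k-1}\big)\big(\tilde x^j_{k+1}-\tilde x^j_k\big)-\Big(\tfrac1\eta-\tfrac{\lambda_j}{2}\Big)\big(\tilde x^j_{k+1}-\tilde x^j_k\big)^2-\tilde\delta^j_k\big(\tilde x^j_{k+1}-\tilde x^j_k\big).
\]
I would then bound the cross term by Young's inequality with a single $j$-independent weight — this is legitimate because $\tfrac1\eta-\lambda_j>0$ (from $\eta\le\tfrac1{4\tilde L}$ and $\lambda_j\le\tilde L$), and a uniform weight works precisely because $\lambda_j\ge-\theta/\eta$ on $S_1$ — chosen so that the induced $\big(\tilde x^j_k-\tilde x^j_{k-1}\big)^2$ term is exactly absorbed into the $\tfrac{1-\theta}{2\eta}$ piece of $\mathcal{E}^j_k$; and I would split $-\tilde\delta^j_k\big(\tilde x^j_{k+1}-\tilde x^j_k\big)\le\tfrac{\theta}{8\eta}\big(\tilde x^j_{k+1}-\tilde x^j_k\big)^2+\tfrac{2\eta}{\theta}\big(\tilde\delta^j_k\big)^2$. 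Collecting all the $\big(\tilde x^j_{k+1}-\tilde x^j_k\big)^2$ contributions and using $\lambda_j\le\tilde L\le\tfrac1{4\eta}$ once more leaves a net coefficient at most $-\tfrac{3\theta}{8\eta}$, which is the claimed one-step estimate.

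It then remains to telescope over $k=0,\dots,\fK-1$. Since each epoch starts at $\tilde x^j_0=\tilde x^j_{-1}$ (because $x_0=x_{-1}$) we have $\mathcal{E}^j_0=\phi_j(\tilde x^j_0)$, and since $1-\theta\ge0$ we have $\mathcal{E}^j_{\fK}\ge\phi_j(\tilde x^j_{\fK})$, so
\[
\phi_j(\tilde x^j_{\fK})-\phi_j(\tilde x^j_0)\;\le\;-\frac{3\theta}{8\eta}\sum_{k=0}^{\fK-1}\big(\tilde x^j_{k+1}-\tilde x^j_k\big)^2+\frac{2\eta}{\theta}\sum_{k=0}^{\fK-1}\big(\tilde\delta^j_k\big)^2 .
\]
Summing over $j\in S_1$ and using $\sum_{j\in S_1}\big(\tilde\delta^j_k\big)^2\le\|\tilde\delta_k\|^2\le(2\tilde\rho B^2+\sigma)^2$ — the bound on $\|\tilde\delta_k\|$ derived just before the lemma from Assumption~\ref{asm:1}, Lemma~\ref{lem:bias of gradient} and~\eqref{2c} — gives exactly~\eqref{7a}.

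The main obstacle I anticipate is the constant-chasing inside the one-step estimate: the potential coefficient and the two Young's-inequality weights must all be picked so that everything lines up simultaneously — the $\big(\tilde x^j_k-\tilde x^j_{k-1}\big)^2$ contributions cancel so that the potential genuinely telescopes, the surviving $\big(\tilde x^j_{k+1}-\tilde x^j_k\big)^2$ coefficient is at least $\tfrac{3\theta}{8\eta}$, and the perturbation enters with coefficient exactly $\tfrac{2\eta}{\theta}$ — using only the two defining properties of $S_1$, namely $-\theta/\eta\le\lambda_j\le\tilde L$, together with $\eta\le\tfrac1{4\tilde L}$ and $0\le\theta\le1$. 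The change of variables, the decoupling, and the final telescoping are routine once this estimate is secured.
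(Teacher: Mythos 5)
Your proposal is correct and follows essentially the same route as the paper: coordinate-wise decoupling in the eigenbasis, an exact quadratic expansion combined with Young's inequality on the cross term and on the perturbation term (with the same $\tfrac{\theta}{8\eta}$ / $\tfrac{2\eta}{\theta}$ split), a telescoping potential, and the bound $\|\tilde\delta_k\|\le 2\tilde\rho B^2+\sigma$ at the end. The only difference is cosmetic — your potential coefficient is $\tfrac{1-\theta}{2\eta}$ where the paper's computation produces $\tfrac{(1-\theta)^2(1+\theta)}{2\eta}$ — and I checked that your choice of weights does yield the net coefficient $-\tfrac{3\theta}{8\eta}$ on $|\tilde x^j_{k+1}-\tilde x^j_k|^2$ over the whole range $-\theta/\eta\le\lambda_j\le\tilde L\le\tfrac{1}{4\eta}$.
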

\begin{proof}
Since $\phi_j(x)$ is quadratic, we have
\begin{align*}
& \quad \phi_j(\tilde{x}_{k+1}^j) \\
&= \phi_j(\tilde{x}_k^j) + \inner{\nabla\phi_j(\tilde{x}_k^j)}{\tilde{x}_{k+1}^j - \tilde{x}_k^j} + \frac{\lambda_j}{2}|\tilde{x}_{k+1}^j - \tilde{x}_k^j|^2\\
&\overset{({\rm a})}{=}\phi_j(\tilde{x}_k^j) - \frac{1}{\eta}\inner{\tilde{x}_{k+1}^j - \tilde w_k^j + \eta \tilde\delta_k^j}{\tilde{x}_{k+1}^j - \tilde{x}_k^j} + \inner{\nabla\phi_j(\tilde x_k^j) - \nabla\phi_j(\tilde w_k^j)}{\tilde{x}_{k+1}^j - \tilde{x}_k^j}\\
& \qquad\qquad+ \frac{\lambda_j}{2}|\tilde{x}_{k+1}^j - \tilde{x}_k^j|^2\\
&= \phi_j(\tilde{x}_k^j)-\frac{1}{\eta}\inner{\tilde{x}_{k+1}^j - \tilde w_k^j}{\tilde{x}_{k+1}^j - \tilde{x}_k^j} -\inner{\tilde\delta_k^j}{\tilde x_{k+1}^j-\tilde x_k^j} + \lambda_j\inner{\tilde x_k^j - \tilde w_k^j}{\tilde x_{k+1}^j-\tilde x_k^j} \\
& \qquad\qquad +\frac{\lambda_j}{2}|\tilde{x}_{k+1}^j - \tilde{x}_k^j|^2\\
&= \phi_j(\tilde{x}_k^j) + \frac{1}{2\eta}\pr{|\tilde x_k^j -\tilde w_k^j|^2 - |\tilde x_{k+1}^j - \tilde w_k^j|^2 - |\tilde x_{k+1}^j-\tilde x_k^j|^2} -\inner{\tilde \delta_k^j}{\tilde x_{k+1}^j - \tilde x_k^j} \\
&\qquad\qquad + \frac{\lambda_j}{2}\pr{|\tilde x_{k+1}^j - \tilde w_k^j|^2 -|\tilde x_k^j - \tilde w_k^j|^2}\\
&\le \phi_j(\tilde{x}_k^j) + \frac{1}{2\eta}\pr{|\tilde x_k^j -\tilde w_k^j|^2- |\tilde x_{k+1}^j - \tilde w_k^j|^2 - |\tilde x_{k+1}^j-\tilde x_k^j|^2} + \frac{1}{2\alpha}|\tilde\delta_k^j|^2 + \frac{\alpha}{2}|\tilde x_{k+1}^j - \tilde x_k^j|^2 \\
&\qquad\qquad+\frac{\lambda_j}{2}\pr{|\tilde x_{k+1}^j - \tilde w_k^j|^2 -|\tilde x_k^j - \tilde w_k^j|^2},
\end{align*}
where we use~\eqref{6b} in $\overset{({\rm a})}{=}$. 

Using the fact $\tilde L \ge \lambda_j \ge -\frac{\theta}{\eta}$ for $j \in S_1$ and 
\begin{align*}
\pr{-\frac{1}{2\eta} + \frac{\lambda_j}{2}}|\tilde x_{k+1}^j-\tilde w_k^j|^2 \le \pr{-2\tilde L + \frac{\tilde L}{2}}|\tilde x_{k+1}^j-\tilde w_k^j|^2 \le 0,    
\end{align*}
we have 
\begin{align*}
&\quad \phi_j(\tilde x_{k+1}^j) \\
&\le \phi_j(\tilde x_k^j)+\frac{1}{2\eta}\pr{|\tilde x_k^j - \tilde w_k^j|^2 - |\tilde x_{k+1}^j - \tilde x_k^j|^2} + \frac{1}{2\alpha}|\tilde\delta_k^j|^2 + \frac{\alpha}{2}|\tilde x_{k+1}^j - \tilde x_k^j|^2 + \frac{\theta}{2\eta}|\tilde x_k^j - \tilde w_k^j|^2\\
&\overset{({\rm b})}{=} \phi_j(\tilde x_k^j)+ \frac{(1-\theta)^2(1+\theta)}{2\eta}|\tilde x_k^j - \tilde x_{k-1}^j|^2 - \pr{\frac{1}{2\eta} - \frac{\alpha}{2}}|\tilde x_{k+1}^j - \tilde x_k^j|^2 + \frac{1}{2\alpha} |\tilde \delta_k^j|^2\\
&= \phi_j(\tilde x_k^j)+ \frac{(1-\theta)^2(1+\theta)}{2\eta}\pr{|\tilde x_k^j - \tilde x_{k-1}^j|^2 - |\tilde x_{k+1}^j - \tilde x_k^j|^2} \\&
\qquad\qquad- \pr{\frac{1}{2\eta} - \frac{\alpha}{2} - \frac{(1-\theta)^2(1+\theta)}{2\eta}}|\tilde x_{k+1}^j - \tilde x_k^j|^2 + \frac{1}{2\alpha} |\tilde \delta_k^j|^2\\
&\overset{({\rm c})}{\le}  \phi_j(\tilde x_k^j)+ \frac{(1-\theta)^2(1+\theta)}{2\eta}\pr{|\tilde x_k^j - \tilde x_{k-1}^j|^2 - |\tilde x_{k+1}^j - \tilde x_k^j|^2} - \frac{3\theta}{8\eta}|\tilde x_{k+1}^j - \tilde x_k^j|^2 + \frac{2\eta}{\theta} |\tilde \delta_k^j|^2
\end{align*}
for each $j \in S_1$, where we use~\eqref{6c} in $\overset{({\rm b})}{=}$ and let $\alpha = \frac{\theta}{4\eta}$ in $\overset{({\rm c})}{\le}$ which leads to 
\begin{align*}
\frac{1}{2\eta} - \frac{\alpha}{2} - \frac{(1-\theta)^2(1+\theta)}{2\eta} 
=\frac{1}{2\eta} - \frac{\theta}{8\eta} - \frac{(1-\theta)^2(1+\theta)}{2\eta} 
= \frac{3\theta}{8\eta} + \frac{\theta^2 - \theta^3}{2\eta} \ge \frac{3\theta}{8\eta}.    
\end{align*}
Summing over above result with $ k = 0,1,\cdots,\fK-1 $ for $j \in S_1$ and using $x_0 = x_{-1}$, we have
\begin{align*}
&\quad \sum_{j\in S_1} \phi_j(\tilde x_{\fK}^j) \\
&\le \sum_{j\in S_1}\phi_j(\tilde x_0^j) - \sum_{j\in S_1}\frac{3\theta}{8\eta} \sum_{k=0}^{\fK-1}|\tilde{x}_{k+1}^j-\tilde{x}_k^j|^2 + \frac{2\eta}{\theta} \sum_{k=0}^{\fK-1}\norm{\tilde \delta_k}^2 -\frac{(1-\theta)^2(1+\theta)}{2\eta} |\tilde x_\fK^j - \tilde x_{\fK-1}^j|^2 \\
&\le \sum_{j\in S_1}\phi_j(\tilde x_0^j) -\sum_{j \in S_1}\frac{3\theta}{8\eta}\sum_{k=0}^{\fK-1}|\tilde{x}_{k+1}^j-\tilde{x}_k^j| + \frac{2\eta\fK}{\theta}(2\tilde{\rho}B^2+\sigma)^2
.
\end{align*}
This completes the proof.
\end{proof}
\bigskip
\noindent
Next, we consider the term $\sum_{j\in S_2}\phi_j(x^j)$.
\begin{lemma}
\label{lem:3.4}
Suppose that Assumption~\ref{asm:1} and Condition~\ref{con:4.1} hold.
Let $\eta \le \frac{1}{4\tilde{L}}$ and $0 \le \theta \le 1$.
When the “if condition” triggers and $\|\nabla \Phi(w_{\fK-1})\|_2 \le \frac{B}{\eta}$, then we have
\begin{equation}
\sum_{j\in S_2}\phi_j(\tilde{x}^j_\fK)
-
\sum_{j\in S_2}\phi_j(\tilde{x}_0^j)
\le
-
\sum_{j \in S_2}\frac{\theta}{2\eta}\sum_{k=0}^{\fK-1}|\tilde{x}_{k+1}^j-\tilde{x}_k^j|^2
+
\frac{\eta\fK}{2\theta}(2\tilde{\rho}B^2+\sigma)^2
+
\frac{\eta\fK}{2\theta}\sigma^2
\ .   \label{7b}
\end{equation}
\end{lemma}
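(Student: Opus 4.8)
The plan is to argue coordinate by coordinate in the eigenbasis of $\mH=\nabla^2\Phi(x_0)$, exactly as in the proof of Lemma~\ref{lem:3.3}, but now exploiting the strong negative curvature $\lambda_j<-\theta/\eta$ available for every $j\in S_2$. First I would reproduce the exact one-step identity that underlies the proof of Lemma~\ref{lem:3.3}: expanding the scalar quadratic $\phi_j$ around $\tilde{x}_k^j$, inserting the momentum relation~\eqref{6b} and the update~\eqref{6c}, and applying the three-point polarization identity to $\inner{\nabla\phi_j(\tilde{x}_k^j)}{\tilde{x}_{k+1}^j-\tilde{x}_k^j}$, one obtains
\[
\phi_j(\tilde{x}_{k+1}^j)-\phi_j(\tilde{x}_k^j)=-\beta_j\big|\tilde{x}_{k+1}^j-\tilde{w}_k^j\big|^2-\tfrac{1}{2\eta}\big|\tilde{x}_{k+1}^j-\tilde{x}_k^j\big|^2+\beta_j\big|\tilde{x}_k^j-\tilde{w}_k^j\big|^2-\inner{\tilde{\delta}_k^j}{\tilde{x}_{k+1}^j-\tilde{x}_k^j},
\]
with $\beta_j:=\tfrac{1}{2\eta}-\tfrac{\lambda_j}{2}>0$ and $\tilde{x}_k^j-\tilde{w}_k^j=-(1-\theta)(\tilde{x}_k^j-\tilde{x}_{k-1}^j)$ by~\eqref{6b}. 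The point is that for $j\in S_2$ one does not discard the term $-\beta_j|\tilde{x}_{k+1}^j-\tilde{w}_k^j|^2$ (as is possible for $S_1$) but expands $\tilde{x}_{k+1}^j-\tilde{w}_k^j=(\tilde{x}_{k+1}^j-\tilde{x}_k^j)-(1-\theta)(\tilde{x}_k^j-\tilde{x}_{k-1}^j)$: the $(1-\theta)^2|\tilde{x}_k^j-\tilde{x}_{k-1}^j|^2$ piece it produces cancels against $\beta_j|\tilde{x}_k^j-\tilde{w}_k^j|^2$, so that after summing over $k=0,\dots,\fK-1$ and writing $a_k:=\tilde{x}_{k+1}^j-\tilde{x}_k^j$ (and using $\tilde{x}_0^j=\tilde{x}_{-1}^j$) one is left with the clean tridiagonal form
\[
\phi_j(\tilde{x}_\fK^j)-\phi_j(\tilde{x}_0^j)=-\Big(\beta_j+\tfrac{1}{2\eta}\Big)\sum_{k=0}^{\fK-1}|a_k|^2+2\beta_j(1-\theta)\sum_{k=1}^{\fK-1}a_ka_{k-1}-\sum_{k=0}^{\fK-1}\inner{\tilde{\delta}_k^j}{a_k}.
\]
This step is purely algebraic and parallel to Lemma~\ref{lem:3.3}.

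Second, I would bound the quadratic form. Splitting $-\inner{\tilde{\delta}_k^j}{a_k}\le\tfrac{\alpha}{2}|a_k|^2+\tfrac{1}{2\alpha}|\tilde{\delta}_k^j|^2$ with $\alpha\asymp\theta/\eta$ — plus one extra Young split, absent in the $S_1$ case, to absorb the part of $a_k$ that is driven by the bias, which is the source of the additional $\tfrac{\eta\fK}{2\theta}\sigma^2$ term in the statement — reduces matters to checking $-(\beta_j+\tfrac{1}{2\eta})\sum_k|a_k|^2+2\beta_j(1-\theta)\sum_{k\ge1}a_ka_{k-1}\le-\tfrac{\theta}{2\eta}\sum_k|a_k|^2$. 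For $j\in S_1$ this is immediate from $\beta_j\le\tfrac{1+\theta}{2\eta}$ together with $\sum_{k\ge1}a_ka_{k-1}\le\cos\!\big(\tfrac{\pi}{\fK+1}\big)\sum_k|a_k|^2$. For $j\in S_2$, however, $\beta_j=\tfrac{1+\eta|\lambda_j|}{2\eta}$ can be as large as $\tfrac{5}{8\eta}$ (since $|\lambda_j|\le\tilde{L}=\tfrac{1}{4\eta}$), so a generic increment sequence would violate the inequality; one must use the structure of the accelerated recursion restricted to a uniformly negatively curved direction. Its characteristic polynomial $r^2-(1-\eta\lambda_j)(2-\theta)r+(1-\eta\lambda_j)(1-\theta)=0$ has two positive real roots, so after the epoch's zero-momentum start the increments $a_k$ do not change sign and grow at a geometric rate $r\gtrsim 1+\sqrt{\eta|\lambda_j|}\ge 1+\sqrt{\theta}$; hence $\sum_{k\ge1}a_ka_{k-1}\le\tfrac{1}{r}\sum_k|a_k|^2$ up to a negligible boundary term, and substituting $\tfrac{1}{r}$ for $\cos(\pi/(\fK+1))$ makes the estimate hold for every $j\in S_2$ with room to spare. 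Telescoping in $k$ and summing over $j\in S_2$, together with $\sum_j|\tilde{\delta}_k^j|^2\le\|\tilde{\delta}_k\|^2\le(2\tilde{\rho}B^2+\sigma)^2$ (the bound established just before Lemma~\ref{lem:3.3}), then yields exactly the claimed inequality.

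The main obstacle is this last point. Unlike $S_1$, the negative-curvature directions cannot be treated by a coordinate-wise descent inequality that is oblivious to the dynamics, because $\beta_j$ is too large relative to $\tfrac{1}{2\eta}$; one genuinely has to exploit that accelerated gradient descent along a uniformly negatively curved direction is a monotone, geometrically expanding recursion, and to do so while keeping track of the bias $\tilde{\delta}_k^j$ — so that the residual stays of the stated order $\tfrac{\eta\fK}{2\theta}\big[(2\tilde{\rho}B^2+\sigma)^2+\sigma^2\big]$ — is the delicate part. Everything else (the polarization algebra, the Young splittings, the telescoping, and the summation over $S_2$) is routine and mirrors the proof of Lemma~\ref{lem:3.3}.
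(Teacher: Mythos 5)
Your opening algebra is sound: the one-step identity $\phi_j(\tilde{x}_{k+1}^j)-\phi_j(\tilde{x}_k^j)=-\beta_j\lvert\tilde{x}_{k+1}^j-\tilde{w}_k^j\rvert^2+\beta_j\lvert\tilde{x}_k^j-\tilde{w}_k^j\rvert^2-\tfrac{1}{2\eta}\lvert a_k\rvert^2-\inner{\tilde{\delta}_k^j}{a_k}$ with $\beta_j=\tfrac{1}{2\eta}-\tfrac{\lambda_j}{2}$ and $a_k=\tilde{x}_{k+1}^j-\tilde{x}_k^j$ is correct, and you rightly diagnose that for $j\in S_2$ the coefficient $\beta_j$ is too large for the oblivious Young/Cauchy--Schwarz treatment used on $S_1$. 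The problem is the step you use to rescue the argument: the claim that the increments are sign-consistent and geometrically expanding, so that $\sum_{k\ge1}a_ka_{k-1}\le\tfrac{1}{r}\sum_k\lvert a_k\rvert^2$ with $r\ge 1+\sqrt{\eta\lvert\lambda_j\rvert}$. This is true for the \emph{homogeneous} recursion started from $(a_{-1},a_0)=(0,a_0)$ (the fundamental solution $(r_1^{k+1}-r_2^{k+1})/(r_1-r_2)$ is positive and satisfies $a_{k-1}/a_k\le 1/r_1$), but the actual sequence is driven at every step by the bias $\tilde{\delta}_k^j$, over whose sign and relative size you have no control --- indeed $a_0=-\eta(\mU^\top\hat{\nabla}\Phi(x_0))^j$ can be zero while the $\tilde{\delta}_k^j$ are only bounded by $2\tilde{\rho}B^2+\sigma$, in which case the sequence is entirely noise-driven and sign-consistency fails. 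Quantifying the resulting correction so that it lands at the stated order $\tfrac{\eta\fK}{2\theta}\big[(2\tilde{\rho}B^2+\sigma)^2+\sigma^2\big]$ is precisely the content of the lemma, so asserting it as a ``negligible boundary term'' leaves a genuine gap at the crux of the proof.

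The paper avoids the increment-correlation sum altogether. It completes the square, $\phi_j(x)=\tfrac{\lambda_j}{2}(x-\nu_j)^2+\mathrm{const}$ with $\nu_j=\tilde{x}_0^j-\lambda_j^{-1}\tilde{\nabla}^j\Phi(x_0)$, so that $\phi_j(\tilde{x}_{k+1}^j)-\phi_j(\tilde{x}_k^j)\le-\tfrac{\theta}{2\eta}\lvert a_k\rvert^2+\lambda_j\inner{a_k}{\tilde{x}_k^j-\nu_j}$, and then shows the scalar $\inner{a_k}{\tilde{x}_k^j-\nu_j}$ obeys the one-step inequality $\inner{a_k}{\tilde{x}_k^j-\nu_j}\ge(1-\theta)\inner{a_{k-1}}{\tilde{x}_{k-1}^j-\nu_j}+\tfrac{\eta}{2\lambda_j}\lvert\tilde{\delta}_k^j\rvert^2$; the dangerous quadratic term $-\eta\lambda_j\lvert\tilde{x}_k^j-\nu_j\rvert^2$ has a favorable sign because $\lambda_j<0$ and is simply dropped. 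Unrolling to $k=0$ with $x_{-1}=x_0$ leaves only the base term $-\tfrac{\eta}{2\lambda_j}(1-\theta)^k\inner{\nabla^j\Phi(x_0)}{\hat{\nabla}^j\Phi(x_0)}$, which polarization converts into the $\sigma^2$ contribution, and multiplying by $\lambda_j<0$ flips every sign the right way. I would recommend replacing your spectral argument with this decomposition: it reduces the two-term recursion to a monotone one-dimensional inequality in which the bias enters additively and is trivially accumulated.
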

\begin{proof}
We denote $\nu_j = \tilde x_0^j - \frac{1}{\lambda_j}\tilde\nabla^j\Phi(x_0)$, then $\phi_j(x)$ can be rewritten as
\begin{align*}
\phi_j(x) & = \frac{\lambda_j}{2}\pr{x - \tilde x_0^j + \frac{1}{\lambda_j}\tilde\nabla^j\Phi(x_0)}^2 - \frac{1}{2\lambda_j}|\tilde\nabla^j\Phi(x_0)|^2
= \frac{\lambda_j}{2} (x-\nu_j)^2 - \frac{1}{2\lambda_j}|\tilde\nabla^j\Phi(x_0)|^2\ .
\end{align*}
For each $j\in S_2 = \{j:\lambda_j < -\frac{\theta}{\eta}\}$, we have
\begin{equation}
\label{A.3}
\begin{aligned}
\phi_j(\tilde x_{k+1}^j) - \phi_j(\tilde x_k^j) &= \frac{\lambda_j}{2} |\tilde x_{k+1}^j-\nu_j|^2 - \frac{\lambda_j}{2}|\tilde x_k^j - \nu_j|^2\\
&= \frac{\lambda_j}{2}|\tilde x_{k+1}^j - \tilde x_k^j|^2 + \lambda_j \inner{\tilde x_{k+1}^j - \tilde x_k^j}{\tilde x_k^j-\nu_j}\\
&\le -\frac{\theta}{2\eta}|\tilde x_{k+1}^j - \tilde x_k^j|^2 + \lambda_j\inner{\tilde x_{k+1}^j - \tilde x_k^j}{\tilde x_k^j-\nu_j}\ .
\end{aligned}
\end{equation}
So we only need to bound the second part. From~\eqref{6b} and~\eqref{6c}, we have
\begin{align*}
\tilde x_{k+1}^j - \tilde x_k^j &= \tilde w_k^j - \tilde x_k^j - \eta \nabla \phi_j(\tilde w_k^j) - \eta \tilde\delta_k^j\\
&= (1-\theta) (\tilde x_{k}^j - \tilde x_{k-1}^j) - \eta\lambda_j(\tilde w_k^j - \nu_j) - \eta \tilde\delta_k^j\\
&= (1-\theta) (\tilde x_{k}^j - \tilde x_{k-1}^j) - \eta\lambda_j(\tilde x_k^j-\nu_j + (1-\theta)(\tilde x_k^j - \tilde x_{k-1}^j)) - \eta \tilde\delta_k^j\ .
\end{align*}
So for each $j \in S_2$, we have
\begin{align*}
\small\begin{split}    
&\quad\inner{\tilde x_{k+1}^j - \tilde x_k^j}{\tilde x_k^j-\nu_j}\\
& = (1-\theta)\inner{\tilde x_{k}^j - \tilde x_{k-1}^j}{\tilde x_k^j-\nu_j} - \eta\lambda_j|\tilde x_k^j-\nu_j|^2-\eta\lambda_j(1-\theta)\inner{\tilde x_k^j-\tilde x_{k-1}^j}{\tilde x_k^j-\nu_j}-\eta\inner{\tilde\delta_k^j}{\tilde x_k^j-\nu_j}\\
&\ge (1-\theta)\inner{\tilde x_{k}^j - \tilde x_{k-1}^j}{\tilde x_k^j-\nu_j} - \eta\lambda_j|\tilde x_k^j-\nu_j|^2 \\
&\qquad + \frac{\eta\lambda_j(1-\theta)}{2}(|\tilde x_k^j-\tilde x_{k-1}^j|^2 + |\tilde x_k^j - \nu_j|^2) + \frac{\eta}{2\lambda_j(1+\theta)}|\tilde \delta_k^j|^2 + \frac{\eta\lambda_j(1+\theta)}{2}|\tilde x_k^j-\nu_j|^2\\
&=(1-\theta)\inner{\tilde x_{k}^j - \tilde x_{k-1}^j}{\tilde x_k^j-\nu_j} + \frac{\eta\lambda_j(1-\theta)}{2}|\tilde x_k^j-\tilde x_{k-1}^j|^2 + \frac{\eta}{2\lambda_j(1+\theta)}|\tilde \delta_k^j|^2\\
&= (1-\theta)\inner{\tilde x_{k}^j - \tilde x_{k-1}^j}{\tilde x_{k-1}^j-\nu_j} +(1-\theta)|\tilde x_k^j - \tilde x_{k-1}^j|^2+ \frac{\eta\lambda_j(1-\theta)}{2}|\tilde x_k^j-\tilde x_{k-1}^j|^2 + \frac{\eta}{2\lambda_j(1+\theta)}|\tilde \delta_k^j|^2\\
&\ge (1-\theta)\inner{\tilde x_{k}^j - \tilde x_{k-1}^j}{\tilde x_{k-1}^j-\nu_j} + \frac{\eta}{2\lambda_j}|\tilde \delta_k^j|^2\ ,
\end{split}
\end{align*}
where we use the fact that $\lambda_j < 0$ when $j\in S_2$ in the first inequality and the fact 
\begin{align*}
\pr{1+\frac{\eta\lambda_j}{2}}(1-\theta)\ge \pr{1-\frac{\eta\tilde L}{2}}(1-\theta)\ge0    
\end{align*}
indicates the second inequality. Then we have
\begin{align*}
\small
\begin{split}    
&~~~~\inner{\tilde x_{k+1}^j-\tilde x_k^j}{\tilde x_k^j-\nu_j} \\
&\ge (1-\theta)^k\inner{\tilde x_1^j-\tilde x_0^j}{\tilde x_0^j-\nu_j} + \frac{\eta}{2\lambda_j}\sum_{i=1}^k(1-\theta)^{k-i}|\tilde \delta_i^j|^2\\
&\overset{({\rm a})}{=} -\frac{\eta}{2\lambda_j}(1-\theta)^k \inner{\nabla^j\Phi(x_0)}{\hat{\nabla}^j\Phi(x_0)}  + \frac{\eta}{2\lambda_j}\sum_{i=1}^k(1-\theta)^{k-i}|\tilde \delta_i^j|^2\\
&= -\frac{\eta}{2\lambda_j}(1-\theta)^k \pr{|\nabla^j\Phi(x_0)|^2 + |\hat{\nabla}^j\Phi(x_0)|^2 - |\nabla^j\Phi(x_0)-\hat{\nabla}^j\Phi(x_0)|^2 }  + \frac{\eta}{2\lambda_j}\sum_{i=1}^k(1-\theta)^{k-i}|\tilde \delta_i^j|^2\\
&\overset{({\rm b})}{\ge} \frac{\eta}{2\lambda_j}(1-\theta)^k \pr{ |\nabla^j\Phi(x_0)-\hat{\nabla}^j\Phi(x_0)|^2 }  + \frac{\eta}{2\lambda_j}\sum_{i=1}^k(1-\theta)^{k-i}|\tilde \delta_i^j|^2,
\end{split}
\end{align*}
where we use 
\begin{align*}
\tilde x_1^j - \tilde x_0^j=\tilde x_1^j - \tilde w_0^j = -\eta \pr{\mU^\top\hat{\nabla}\Phi(x_0)}^j
\quad\text{and}\quad
\tilde x_0^j - \nu_j = -\frac{1}{\lambda_j}\tilde\nabla^j\Phi(x_0)
\end{align*}
in $\overset{(\rm{a})}{=}$ and $\lambda_j < 0$ in $\overset{({\rm b})}{\ge}$. Plugging above inequality into~\eqref{A.3} and using $\lambda_j<0$, we have
\begin{align*}
\small
\begin{split}    
\phi_j(\tilde x_{k+1}^j) - \phi_j(\tilde x_k^j)
&\le -\frac{\theta}{2\eta}|\tilde x_{k+1}^j - \tilde x_k^j|^2 + \frac{\eta}{2}(1-\theta)^k \pr{ |\nabla^j\Phi(x_0)-\hat{\nabla}^j\Phi(x_0)|^2 }  + \frac{\eta}{2}\sum_{i=1}^k(1-\theta)^{k-i}|\tilde \delta_i^j|^2.
\end{split}
\end{align*}
Summing over above result with $k=0,1,\dots,\fK-1$ for $j \in S_2$, we have
\begin{align*}
&\quad\sum_{j\in S_2}\phi_j(\tilde{x}^j_\fK) - \sum_{j\in S_2}\phi_j(\tilde{x}_0^j)\\
&\le-\sum_{j \in S_2}\frac{\theta}{2\eta}\sum_{k=0}^{\fK-1}|\tilde{x}_{k+1}^j-\tilde{x}_k^j|^2 + \frac{\eta}{2}\norm{\nabla\Phi(x_0) - \hat{\nabla}\Phi(x_0)}^2\sum_{k=0}^{\fK-1}(1-\theta)^k + \frac{\eta}{2}\sum_{k=0}^{\fK-1}\sum_{i=1}^k(1-\theta)^{k-i}\|\tilde \delta_i\|_2^2\\
&\le -\sum_{j \in S_2}\frac{\theta}{2\eta}\sum_{k=0}^{\fK-1}|\tilde{x}_{k+1}^j-\tilde{x}_k^j|^2 + \frac{\eta}{2}\sigma^2\sum_{k=0}^{\fK-1}(1-\theta)^k + \frac{\eta}{2}\sum_{k=0}^{\fK-1}\sum_{i=1}^k(1-\theta)^{k-i}\|\tilde \delta_i\|_2^2\\
&\le -\sum_{j \in S_2}\frac{\theta}{2\eta}\sum_{k=0}^{\fK-1}|\tilde{x}_{k+1}^j-\tilde{x}_k^j|^2 + \frac{\eta\fK}{2\theta}\sigma^2 + \frac{\eta\fK}{2\theta}(2\tilde\rho B+\sigma)^2,
\end{align*}
which completes the  proof.
\end{proof}

Putting Lemmas~\ref{lem:3.3} and ~\ref{lem:3.4} together we can lower bound the decrement of $\Phi(x)$ in a single epoch.
\begin{lemma}
\label{lem:3.5}
Suppose that Assumption~\ref{asm:1} and Condition~\ref{con:4.1} hold.
Let $\eta \le \frac{1}{4\tilde{L}}$ and $0 \le \theta \le 1$.
When the “if condition” triggers and $\|\nabla \Phi(w_{\fK-1})\|_2 \le \frac{B}{\eta}$, we have
$$
\Phi(x_\fK) - \Phi(x_0)
\le
-\frac{\epsilon^{3/2}}{\sqrt{\tilde \rho}}
\ .
$$
\end{lemma}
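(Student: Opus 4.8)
The plan is to work inside the quadratic model of $\Phi$ around the epoch's starting point $x_0$: the Hessian-Lipschitz estimate~\eqref{6a} bounds $\Phi(x_\fK)-\Phi(x_0)$ by the decrease of the quadratic $\phi$ plus a cubic remainder, and Lemmas~\ref{lem:3.3} and~\ref{lem:3.4} already control this decrease on the subspace $S_1$ (eigenvalues $\ge-\theta/\eta$) and on the sharply negative subspace $S_2$ (eigenvalues $<-\theta/\eta$) respectively. Adding the two bounds, the surviving negative term is a multiple of $\sum_{k=0}^{\fK-1}\|x_{k+1}-x_k\|^2$, and the whole point is to show this \emph{drift} dominates all the error once the parameters of Theorem~\ref{thm:one-oder} are substituted.

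First I would combine~\eqref{6a} with~\eqref{7a} and~\eqref{7b}. Since $\tfrac{3\theta}{8\eta}\le\tfrac{\theta}{2\eta}$, the two coordinate sums merge into one with the smaller coefficient, producing a drift $-\tfrac{3\theta}{8\eta}\sum_{k=0}^{\fK-1}\sum_j|\tilde x_{k+1}^j-\tilde x_k^j|^2=-\tfrac{3\theta}{8\eta}\sum_{k=0}^{\fK-1}\|x_{k+1}-x_k\|^2$ by orthogonality of $\mU$. The restarting condition~\eqref{2a} that ended the epoch gives $\sum_{k=0}^{\fK-1}\|x_{k+1}-x_k\|^2>B^2/\fK$, and since the \texttt{while} loop did not break before $k=K$ we have $\fK\le K=1/\theta$; hence the drift is at most $-\tfrac{3\theta^2B^2}{8\eta}$. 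Plugging in $\eta=1/(4\tilde L)$, $B^2=\epsilon/\tilde\rho$ and $\theta^2=16\eta\sqrt{\tilde\rho\epsilon}$ collapses this to exactly $-6\,\epsilon^{3/2}/\sqrt{\tilde\rho}$.

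It then remains to check that the accumulated error — the terms $\tfrac{5\eta\fK}{2\theta}(2\tilde\rho B^2+\sigma)^2$ and $\tfrac{\eta\fK}{2\theta}\sigma^2$ inherited from Lemmas~\ref{lem:3.3}--\ref{lem:3.4}, plus the cubic remainder $\tfrac{\tilde\rho}{6}(3B+\eta\sigma)^3$ from~\eqref{6a} — is small enough that the net change is still at most $-\epsilon^{3/2}/\sqrt{\tilde\rho}$. I would use $\sigma=\epsilon^2$, $\tilde\rho B^2=\epsilon$, $\fK\le 1/\theta$ once more, and the standing hypothesis $\epsilon\le\tilde L^2/\tilde\rho$, which forces $\eta\sigma=\epsilon^2/(4\tilde L)$ to be only a small fraction of $B=\sqrt{\epsilon/\tilde\rho}$; each of the three pieces is then of the form $c\,\epsilon^{3/2}/\sqrt{\tilde\rho}$ with an explicit numerical constant, the largest being the cubic remainder $\approx\tfrac{27}{6}\,\epsilon^{3/2}/\sqrt{\tilde\rho}$. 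Combining the drift with the errors yields $\Phi(x_\fK)-\Phi(x_0)\le-\epsilon^{3/2}/\sqrt{\tilde\rho}$.

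The main obstacle is the constant bookkeeping in that last step: the negative drift only barely overwhelms the accumulated error, so there is essentially no slack, and the argument depends on the exact numerical factor chosen in $\theta$ and on the bound $\fK\le K=1/\theta$ (not merely $\fK<\infty$). The role of the hypothesis $\epsilon\le\tilde L^2/\tilde\rho$ is precisely to keep the inexactness contribution $\eta\sigma$ negligible inside the cubic remainder; beyond this, everything is routine substitution.
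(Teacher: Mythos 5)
Your proposal is correct and follows essentially the same route as the paper's proof: sum the $S_1$ and $S_2$ bounds from Lemmas~\ref{lem:3.3}--\ref{lem:3.4} under the weaker coefficient $\tfrac{3\theta}{8\eta}$, lower-bound the drift via the restart condition~\eqref{2a} together with $\fK\le K=1/\theta$, plug the result into the cubic Taylor bound~\eqref{6a}, and then substitute the parameter choices of Theorem~\ref{thm:one-oder}. Your explicit accounting (drift $-6\epsilon^{3/2}/\sqrt{\tilde\rho}$ against a cubic remainder of roughly $\tfrac{27}{6}\epsilon^{3/2}/\sqrt{\tilde\rho}$ plus the $(2\tilde\rho B^2+\sigma)^2$ term) is in fact more careful than the paper's, which asserts the final inequality without displaying this arithmetic, and you rightly flag that the numerical slack is essentially nil.
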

\begin{proof}
\noindent
Summing over~\eqref{7a} and~\eqref{7b}, we have
\begin{equation}
\begin{aligned}
\phi(\tilde x_\fK) - \phi(\tilde x_0)
& =
\sum_{j \in S_1\cup S_2} \phi_j(\tilde x_\fK^j) - \phi_j(\tilde x_0^j)
\\&\le
\frac{3\theta}{8\eta}\sum_{k=0}^{\fK-1}\norm{\tilde{x}_{k+1}-\tilde{x}_k}^2 + \frac{5\eta\fK}{2\theta}(2\tilde{\rho}B^2+\sigma)^2
+ \frac{\eta\fK}{2\theta}\sigma^2\\
&\le
-\frac{3\theta B^2}{8\eta\fK} +  \frac{5\eta\fK}{2\theta}(2\tilde{\rho}B^2+\sigma)^2+ \frac{\eta\fK}{2\theta}\sigma^2
,
\end{aligned}
\end{equation}
where we use~\eqref{2a} in the last inequality. Plugging into~\eqref{6a} and using $\fK \le K$, we have
\begin{equation}
\begin{aligned}
\Phi(x_\fK) - \Phi(x_0)
&\le
-\frac{3\theta B^2}{8\eta\fK} +  \frac{5\eta\fK}{2\theta}(2\tilde{\rho}B^2+\sigma)^2 + \frac{\tilde\rho}{6} (3B + \eta\sigma)^3+ \frac{\eta\fK}{2\theta}\sigma^2
\\&\le
-\frac{3\theta B^2}{8\eta K} +  \frac{5\eta K}{2\theta}(2\tilde{\rho}B^2+\sigma)^2 + \frac{\tilde\rho}{6} (3B + \eta\sigma)^3+ \frac{\eta\fK}{2\theta}\sigma^2
\\&\le
-\frac{\epsilon^{3/2}}{\sqrt{\tilde \rho}}
\ .
\end{aligned}
\end{equation}
This completes the proof.
\end{proof}

Now we can provide an upper bound on the total number of epochs, as shown in the following lemma.
\begin{lemma}
\label{lem:epoch_num1}
Consider the setting of Theorem~\ref{thm:one-oder}, and we run \texttt{RAHGD} in Algorithm~\ref{alg:AHGD}.
Then the algorithm terminates in at most $\Delta \sqrt{\tilde\rho}\epsilon^{-3/2}$ epochs.
\end{lemma}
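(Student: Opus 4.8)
The plan is to show that $\Phi$ decreases by at least $\epsilon^{3/2}/\sqrt{\tilde\rho}$ over every epoch other than the last, and then to conclude by a telescoping (potential‑function) argument using that $\Phi$ is bounded below by $\min_x \Phi(x)$. All of the per‑epoch work has already been done in Lemmas~\ref{lem:3.2}--\ref{lem:3.5}; the remaining task is only to assemble them under the parameter choices of Theorem~\ref{thm:one-oder}.

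First I would fix an epoch that is not the final one. By the definition of $\fK$ the ``if condition'' triggers at iteration $\fK$ of that epoch, so exactly one of the two regimes used above applies. If $\norm{\nabla\Phi(w_{\fK-1})} \le B/\eta$, Lemma~\ref{lem:3.5} gives directly $\Phi(x_\fK) - \Phi(x_0) \le -\epsilon^{3/2}/\sqrt{\tilde\rho}$. If instead $\norm{\nabla\Phi(w_{\fK-1})} > B/\eta$, I would invoke Lemma~\ref{lem:3.2}, which yields $\Phi(x_\fK) - \Phi(x_0) \le -\frac{B^2}{4\eta} + \sigma B + \frac{5\eta\sigma^2\fK}{8}$, and then substitute $\eta = 1/(4\tilde L)$, $B = \sqrt{\epsilon/\tilde\rho}$, $\sigma = \epsilon^2$, together with $\fK \le K = 1/\theta$ and $\theta = 4(\tilde\rho\epsilon\eta^2)^{1/4}$. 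The dominant term is $\frac{B^2}{4\eta} = \tilde L\epsilon/\tilde\rho$, and the hypothesis $\epsilon \le \tilde L^2/\tilde\rho$ (equivalently $\tilde L \ge \sqrt{\tilde\rho\epsilon}$, i.e.\ $\tilde\rho \le \tilde L^2/\epsilon$) gives $\frac{B^2}{4\eta} \ge \epsilon^{3/2}/\sqrt{\tilde\rho}$; the remaining two terms evaluate to $\sigma B = \epsilon^{5/2}/\sqrt{\tilde\rho}$ and $\frac{5\eta\sigma^2\fK}{8} = \fO(\epsilon^{15/4}\tilde L^{-1/2}\tilde\rho^{-1/4})$, which are of strictly higher order in $\epsilon$ than $\epsilon^{3/2}/\sqrt{\tilde\rho}$ (using $\tilde\rho \le \tilde L^2/\epsilon$ once more to control the ratios). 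Hence, after absorbing these lower‑order terms, one again gets $\Phi(x_\fK) - \Phi(x_0) \le -\epsilon^{3/2}/\sqrt{\tilde\rho}$ in this regime too, so every non‑final epoch decreases $\Phi$ by at least $\epsilon^{3/2}/\sqrt{\tilde\rho}$.

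It remains to telescope. Index the epochs by $t = 0, 1, \dots, N-1$ and write $x_0^{(t)}$ for the starting iterate of epoch $t$, so that $x_0^{(0)} = x_{\rm{int}}$ and, for $t < N-1$, $x_0^{(t+1)}$ coincides with the last iterate $x_\fK^{(t)}$ of epoch $t$. Summing the per‑epoch decrease over $t = 0, \dots, N-2$ yields $\Phi(x_0^{(N-1)}) \le \Phi(x_{\rm{int}}) - (N-1)\,\epsilon^{3/2}/\sqrt{\tilde\rho}$, and since $\Phi(x_0^{(N-1)}) \ge \min_x \Phi(x)$ we obtain $(N-1)\,\epsilon^{3/2}/\sqrt{\tilde\rho} \le \Delta$, i.e.\ $N - 1 \le \Delta\sqrt{\tilde\rho}\,\epsilon^{-3/2}$, the claimed bound on the number of (completed) epochs. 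I do not expect a genuine obstacle here: the analytical substance is entirely in Lemmas~\ref{lem:3.2}--\ref{lem:3.5}, and the only point requiring care is the large‑gradient case, where one must check that the lower‑order terms $\sigma B$ and $\eta\sigma^2\fK$ do not spoil the clean $\epsilon^{3/2}/\sqrt{\tilde\rho}$ decrement (so that no constants accumulate through the telescoping), plus the bookkeeping of which iterates are identified across an epoch boundary.
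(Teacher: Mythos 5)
Your proposal is correct and follows essentially the same route as the paper: split into the large-gradient case (Lemma~\ref{lem:3.2}) and small-gradient case (Lemma~\ref{lem:3.5}), use $\epsilon \le \tilde L^2/\tilde\rho$ to see that the large-gradient decrement $B^2/(4\eta)=\epsilon\tilde L/\tilde\rho$ dominates $\epsilon^{3/2}/\sqrt{\tilde\rho}$, and telescope against the lower bound $\min_x\Phi(x)$ (the paper phrases the telescoping as a contradiction, which is equivalent). If anything, you are slightly more careful than the paper in explicitly checking that the lower-order terms $\sigma B$ and $\tfrac{5}{8}\eta\sigma^2\fK$ do not spoil the per-epoch decrement.
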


\begin{proof}[Proof of Lemma~\ref{lem:epoch_num1}]
From Lemma~\ref{lem:3.2} and ~\ref{lem:3.5}, we have
\begin{equation}
\Phi(x_\fK) - \Phi(x_0)
\le
-\min\fpr{
\frac{\epsilon^{3/2}}{\sqrt{\tilde \rho}}
,
\frac{\epsilon\tilde L}{\tilde \rho}
} \ .
\end{equation}
Notice that in Algorithm~\ref{alg:AHGD}, we set $x_0$ to be the last iterate $x_{\fK}$ in the previous epoch.
Suppose the total number of epochs $N > \Delta \sqrt{\tilde\rho}\epsilon^{-3/2}$, then summing over all epochs we have
\begin{equation}\label{epoch_number}
\min_{x\in\BR^{d_x}}\Phi(x) - \Phi(x_{\rm{int}})
\le
-N\min \fpr{
\frac{\epsilon^{3/2}}{\sqrt{\tilde \rho}}
,
\frac{\epsilon\tilde L}{\tilde \rho}
}
<
-\Delta
,
\end{equation}
leading to contradiction.
Therefore the algorithm terminates in at most $\Delta \sqrt{\tilde\rho}\epsilon^{-3/2}$ epochs. 
\end{proof}

We are now prepared to finish the proof of Theorem~\ref{thm:one-oder}.
\begin{proof}[Proof of Theorem~\ref{thm:one-oder}]
Lemma~\ref{lem:epoch_num1} says that \texttt{RAHGD} will terminate in at most $\Delta \sqrt{\tilde\rho}\epsilon^{-3/2}$ epochs.
Since each epoch needs at most $K = \frac{1}{2}\big(\tilde L^2/(\tilde\rho\epsilon)\big)^{1/4}$ iterations, the total iterations must be less than $\Delta\tilde L^{1/2}\tilde \rho^{1/4}\epsilon^{-7/4}$. 
Recall that we have $\tilde L = \fO(\kappa^3)$ and $\tilde\rho = \fO(\kappa^5)$, thus the total iterations is at most $\fO(\kappa^{11/4}\epsilon^{-7/4})$.
\medskip
\\
Now we consider the last epoch.
Denote $\tilde w = \mU^\top\hat{w} = \frac{1}{K_0+1}\sum_{k=0}^{K_0}\mU^\top w_{k} = \frac{1}{K_0 + 1}\sum_{k=0}^{K_0}\tilde w_k$.
Since $\phi$ is quadratic, we have
\begin{align*}
\norm{\phi(\tilde w)}
&=
\norm{\frac{1}{K_0+1}\sum_{k=0}^{K_0}\nabla \phi(\tilde w_k)}
\\&\overset{({\rm a})}{=}
\frac{1}{\eta(K_0+1)}\norm{\sum_{k=0}^{K_0}\pr{\tilde x_{k+1} - \tilde w_k + \eta \tilde\delta_k}}
\\&=
\frac{1}{\eta(K_0+1)}\norm{\sum_{k=0}^{K_0}\pr{\tilde x_{k+1} - \tilde x_k - (1-\theta)(\tilde x_k - \tilde x_{k-1})+\eta\tilde\delta_k}}
\\&\overset{({\rm b})}{=}
\frac{1}{\eta(K_0+1)}\norm{\tilde x_{K_0+1} - \tilde x_0 -(1-\theta)(\tilde x_{K_0} - \tilde x_0 + \eta\sum_{k=0}^{K_0}\tilde\delta_k}
\\&=
\frac{1}{\eta (K_0+1)}\norm{\tilde x_{K_0+1} - \tilde x_{K_0} + \theta(\tilde x_{K_0} - \tilde x_0) + \eta\sum_{k=0}^{K_0}\tilde\delta_k}
\\&\le
\frac{1}{\eta(K_0+1)}\pr{\norm{\tilde x_{K_0+1} - \tilde x_{K_0}} + \theta \norm{\tilde x_{K_0} - \tilde x_0} + \eta\sum_{k=0}^{K_0}\norm{\tilde\delta_k}}
\\&\overset{({\rm c})}{\le}
\frac{2}{\eta K}\norm{\tilde x_{K_0+1} - \tilde x_{K_0}} + \frac{2\theta B}{\eta K} + 2\tilde \rho B^2 + \sigma
,
\end{align*}
where we use~\eqref{6c} in $\overset{({\rm a})}{=}$, $x_{-1} = x_0$ in $\overset{({\rm b})}{=}$; $K_0 + 1 \ge \frac{K}{2}$,~\eqref{3a} and~\eqref{3b} in~$\overset{({\rm c})}{\le}$. 

From $K_0 = \argmin_{\lfloor \frac{K}{2}\rfloor\leq k\leq K-1}~\norm{x_{k+1}-x_{k}}$, we have
\begin{align*}
\norm{x_{K_0+1} - x_{K_0}}^2
&\le
\frac{1}{K - \floor{K/2}}\sum_{k=\floor{K/2}}^{K-1}\norm{x_{k+1} - x_k}^2
\le
\frac{1}{K - \floor{K/2}}\sum_{k=0}^{K-1}\norm{x_{k+1} - x_k}^2
\\&\overset{({\rm d})}{\le}
\frac{1}{K - \floor{K/2}}\frac{B^2}{K}
\le
\frac{2B^2}{K^2}
,
\end{align*}
where we use~\eqref{3a} in $\overset{({\rm d})}{\le}$. On the other hand, we also have
\begin{align*}
\norm{\nabla\Phi(\hat{w})}
=
\norm{\tilde \nabla \Phi(\hat{w})}
&\le
\norm{\nabla\phi(\tilde w)} + \norm{\tilde \nabla \Phi(\hat{w}) - \nabla\phi(\tilde w)}
\\&=
\norm{\nabla\phi(\tilde w)} \norm{\tilde \nabla\Phi(\hat{w}) - \tilde\nabla\Phi(x_0) - {\bf\Lambda}(\tilde w - \tilde x_0)}
\\&=
\norm{\nabla\phi(\tilde w)} + \norm{\nabla \Phi(\hat{w}) - \nabla \Phi(x_0) - \mH(\hat{w} - x_0)}
\\&\le
\norm{\nabla\phi(\tilde w)} + \frac{\tilde\rho}{2}\norm{\hat{w}-x_0}^2
\overset{({\rm e})}{\le}
\norm{\nabla\phi(\tilde w)} + 2\tilde\rho B^2
 \ ,
\end{align*}
where we use $
\norm{\hat{w} - x_0} \le \frac{1}{K_0+1}\sum_{k=0}^{K_0}\norm{w_k - x_0}\le 2B
$ from~\eqref{3b} in $\overset{({\rm e})}{\le}$.
So we have
$$
\norm{\nabla\Phi(\hat{w})}
\le
\frac{2\sqrt{2}B}{\eta K^2} + \frac{2\theta B}{\eta K} + 4\tilde\rho B^2 + \sigma
\le
83\epsilon
.
$$
This completes our proof of Theorem~\ref{thm:one-oder}.
\end{proof}

\pb\subsection{Proof of Proposition~\ref{thm:T_bound1}}
\label{sec:proof_T_bound1}
\begin{proof}
We first consider the iterations of CG in Algorithm~\ref{alg:AHGD} in one epoch. We set $T_{t,k}'$ as 
\begin{equation}\label{equ:T_tk'}
T_{t,k}' = \left\{
\begin{array}{ll}
\ceil{\frac{\sqrt{\kappa}+1}{2}\log\pr{\frac{4\ell\sqrt{\kappa}}{\sigma}\pr{\norm{v_{0,-1}} +  \frac{M}{\mu}}}}
,&k=0,
\\[0.3cm]
\ceil{\frac{\sqrt{\kappa}+1}{2}\log\pr{\frac{4\ell\sqrt{\kappa}}{\sigma}\pr{\frac{\sigma}{2\ell} + \frac{2M}{\mu}}}}
,& k \ge 1.
\end{array}
\right.
\end{equation}
We denote 
\begin{align*}    
v^*(x,y) = \big(\nabla^2_{yy} g(x,y)\big)^{-1} \nabla_y f(x, y),
\end{align*}
then 
\begin{align*}    
\norm{v^*(x,y)} \le \frac{M}{\mu}
,\quad
\forall x \in \BR^{d_x},y \in \BR^{d_y}.
\end{align*}
We use induction to show that
\begin{align*}    
\norm{v_{t,k} - v^*_{t,k}} \le \frac{\sigma}{2\ell}
\end{align*}
holds for any $k \ge 0$.
For $k = 0$, Lemma~\ref{lem:CG} straightforwardly implies that 
\begin{align*}
\norm{v_{t,0} - v_{t,0}^*} \le \frac{\norm{v_{0,-1}-v_{t,0}^*}}{\norm{v_{0,-1}} + M/\mu}\cdot\frac{\sigma}{2\ell}
\le
\frac{\sigma}{2\ell}.    
\end{align*}
Suppose it holds that $\norm{v_{t,k} - v^*_{t,k}} \le \frac{\sigma}{2\ell}$ for any $k = k'-1$, then we have
\begin{align*}
\norm{v_{t,k'} - v_{t,k'}^*}
&\le
2\sqrt{\kappa}\pr{1-\frac{2}{1+\sqrt{\kappa}}}^{T_{t,k'}'}\norm{v_{t,k'-1} - v_{t,k'}^*}
\\&\le
2\sqrt{\kappa}\pr{1-\frac{2}{1+\sqrt{\kappa}}}^{T_{t,k'}'} \pr{\norm{v_{t,k'-1} - v_{t,k'-1}^*} + \norm{v_{t,k'-1}^* - v_{t,k'}^*}}
\\&\le
2\sqrt{\kappa}\pr{1-\frac{2}{1+\sqrt{\kappa}}}^{T_{t,k'}'} \pr{\frac{\sigma}{2\ell} + \frac{2M}{\mu}}
\le
\frac{\sigma}{2\ell},
\end{align*}
where the first inequality is based on Lemma~\ref{lem:CG}, the second one uses triangle inequality, the third one uses the definition of $T_k'$. Therefore, \eqref{equ:cond2} in Condition~\ref{con:4.1} can hold.

The total iteration number of CG in Algorithm~\ref{alg:AHGD} in one epoch satisfies
\begin{align*}
\small
\begin{split}
&\quad \sum_{k=0}^{\fK-1}T_k' \\
&\le
\fK + \frac{\sqrt{\kappa}+1}{2}\left(\frac{2T_0'}{\sqrt{\kappa}+1} + \sum_{k=1}^{\fK-1} \log \pr{\frac{4\ell\sqrt{\kappa}}{\sigma}\pr{\frac{\sigma}{2\ell} + \frac{2M}{\mu}}} \right)
\\&=
\fK + \frac{\sqrt{\kappa}+1}{2}\left(\frac{2T_0'}{\sqrt{\kappa}+1} + \pr{\fK-1}\log \pr{\frac{4\ell\sqrt{\kappa}}{\sigma}\pr{\frac{\sigma}{2\ell} + \frac{2M}{\mu}}} \right)
\\&=
\fK + \frac{\sqrt{\kappa}+1}{2}\fK \left(\frac{1}{\fK}\log\pr{\frac{4\ell\sqrt{\kappa}}{\sigma}\pr{\norm{v_{0,-1}} + \frac{M}{\mu}}}
+ \pr{1-\frac{1}{\fK}} \log\pr{\frac{4\ell\sqrt{\kappa}}{\sigma}\pr{\frac{\sigma}{2\ell} + \frac{2M}{\mu}}}\right)
\ .
\end{split}
\end{align*}
Now we consider the iterations of AGD in Algorithm~\ref{alg:AHGD}.We first show the following lemma.
\begin{lemma}
\label{lem:used in proof T_bound1}
Consider the setting of Theorem~\ref{thm:one-oder}, and we run Algorithm~\ref{alg:AHGD}, then we have
$$\norm{y^*(w_{t,-1})} \le \hat{C}$$
for any $t>0$, where $\hat{C} = \norm{y^*(x_{0,0})} + (2B + \eta\sigma + \eta C)\kappa\Delta \sqrt{\tilde\rho}\epsilon^{-3/2}.$
\end{lemma}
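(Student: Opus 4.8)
The plan is to bound $\norm{y^*(w_{t,-1})}$ by tracking how far the epoch-starting iterate $x_{t,-1}=x_{t,0}=w_{t,-1}$ has drifted away from the initialization $x_{0,0}$, and then applying the $\kappa$-Lipschitz continuity of $y^*$ from Lemma~\ref{lem:y*-Lip}. Since $\norm{y^*(w_{t,-1})}\le \norm{y^*(x_{0,0})}+\kappa\,\norm{x_{t,-1}-x_{0,0}}$, it suffices to establish a per-epoch displacement bound and telescope it over the at most $\Delta\sqrt{\tilde\rho}\,\epsilon^{-3/2}$ epochs guaranteed by Lemma~\ref{lem:epoch_num1}.

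First I would fix an epoch and write $x_0=x_{t,0}$ and $x_{\fK}=x_{t+1,0}$; for \texttt{RAHGD} no perturbation is added, so indeed $x_{t+1,-1}=x_{t+1,0}=x_{t,\fK}$. For every $k<\fK$ we have $\norm{w_k-x_0}\le 2B$ from~\eqref{2c}, and the last update is $x_{\fK}=w_{\fK-1}-\eta u_{\fK-1}$ with $\norm{u_{\fK-1}}=\norm{\hat{\nabla}\Phi(w_{\fK-1})}\le \norm{\nabla\Phi(w_{\fK-1})}+\sigma$ by Lemma~\ref{lem:bias of gradient}. Hence $\norm{x_{t+1,0}-x_{t,0}}\le 2B+\eta\sigma+\eta\,\norm{\nabla\Phi(w_{\fK-1})}$, and the only remaining task is to bound $\norm{\nabla\Phi(w_{\fK-1})}$ uniformly by a parameter-dependent constant $C$.

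For this uniform gradient bound I would split on the two cases used in the proof of Theorem~\ref{thm:one-oder}. In the small-gradient branch, $\norm{\nabla\Phi(w_{\fK-1})}\le B/\eta$ is precisely the hypothesis of the case. In the large-gradient branch, the one-epoch descent inequality~\eqref{equ:gradient bound}, combined with $\fK\le K$ and the fact that $\Phi$ never increases across epochs (so $\Phi(x_0)-\Phi(x_{\fK})\le \Phi(x_{0,0})-\min_x\Phi(x)=\Delta$), gives $\frac{3\eta}{8}\norm{\nabla\Phi(w_{\fK-1})}^2\le \Delta+\frac{B^2}{8\eta}+\sigma B+\frac{5\eta\sigma^2K}{8}$, hence $\norm{\nabla\Phi(w_{\fK-1})}\le C$ for a suitable constant $C$. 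Taking $C$ to be the larger of this quantity and $B/\eta$ yields the uniform per-epoch bound $\norm{x_{t+1,0}-x_{t,0}}\le 2B+\eta\sigma+\eta C$ for every non-final epoch; for the final epoch the cruder estimates~\eqref{3a}--\eqref{3b} give the same conclusion, so that case needs no special treatment.

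Finally I would telescope: $\norm{x_{t,-1}-x_{0,0}}=\norm{x_{t,0}-x_{0,0}}\le\sum_{s=0}^{t-1}\norm{x_{s+1,0}-x_{s,0}}\le t\,(2B+\eta\sigma+\eta C)$, where $t$ is at most the total number of epochs, which Lemma~\ref{lem:epoch_num1} bounds by $\Delta\sqrt{\tilde\rho}\,\epsilon^{-3/2}$. Plugging this into the $\kappa$-Lipschitz estimate gives $\norm{y^*(w_{t,-1})}\le \norm{y^*(x_{0,0})}+\kappa(2B+\eta\sigma+\eta C)\Delta\sqrt{\tilde\rho}\,\epsilon^{-3/2}=\hat{C}$. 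The step I expect to be the main obstacle is the uniform gradient bound $C$ at the terminal iterate $w_{\fK-1}$ of an arbitrary epoch: in the large-gradient case it is not directly available, and one must couple the single-epoch descent estimate with the global lower bound on $\Phi$, while checking that $C$ depends only on the fixed algorithmic parameters $\eta,B,\sigma,K$ (each a fixed polynomial in $\kappa$ and $\epsilon$ under the choices of Theorem~\ref{thm:one-oder}).
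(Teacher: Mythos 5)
Your proposal is correct and follows essentially the same route as the paper: bound the per-epoch displacement $\norm{x_{t+1,0}-x_{t,0}}\le 2B+\eta\sigma+\eta C$ via~\eqref{2c}, Lemma~\ref{lem:bias of gradient}, and a uniform bound $C$ on $\norm{\nabla\Phi(w_{\fK-1})}$, then telescope over the at most $\Delta\sqrt{\tilde\rho}\,\epsilon^{-3/2}$ epochs from Lemma~\ref{lem:epoch_num1} and apply the $\kappa$-Lipschitzness of $y^*$. Your derivation of $C$ from~\eqref{equ:gradient bound} together with the monotone decrease of $\Phi$ and its global lower bound is in fact more explicit than the paper's contradiction-style assertion of~\eqref{equ:real bound gradient}, which is a minor improvement rather than a divergence.
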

Then consider the iteratons of AGD in Algorithm~\ref{alg:AHGD}. We choose $T_{t,k}$ as
\begin{equation}\label{equ:T_tk}
T_{t,k} = \left\{
\begin{array}{ll}
\ceil{2\sqrt{\kappa}\log\pr{\frac{2\tilde L\sqrt{\kappa+1}}{\sigma}\hat{C}}}
,&
k=-1.
\\[0.3cm]
\ceil{2\sqrt{\kappa}\log\pr{\frac{2\tilde L\sqrt{\kappa+1}}{\sigma}\pr{\frac{\sigma}{2\tilde L}+2\kappa B}}}
,&
k \ge 0.
\end{array}
\right.
\end{equation}
We will use induction to show that Lemma~\ref{lem:used in proof T_bound1} as well as ~\eqref{equ:cond1} in Condition~\ref{con:4.1} will hold.
For $t = 0$, Lemma~\ref{lem:used in proof T_bound1} hold trivially. Then we use induction with respect to $k$ to prove that
$$
\norm{y_{t,k} - y^*(w_{t,k})}\le \frac{\sigma}{2\tilde L}
$$
holds for any  $k \ge -1$. For $k = -1$, Lemma~\ref{lem:AGD} directly implies 
\begin{align*}
\norm{y_{t,-1} - y^*(w_{t,-1})} \le\frac{\norm{y^*(w_{t,-1)}}}{\hat{C}}\cdot\frac{\sigma}{2\tilde L} \le\frac{\sigma}{2\tilde L}, 
\end{align*}
where the second inequality is based on Lemma~\ref{lem:used in proof T_bound1}.
Suppose it holds that
\begin{align*}
\norm{y_{t,k-1}-y^*(w_{t,k-1})}\le \frac{\sigma}{2\tilde L}    
\end{align*}
for any $k \le k'-1$, then we have
\begin{align*}
&\quad
\norm{y_{t,k'} -y^*(w_{t,k'})}
\\&\le
\sqrt{1+\kappa}\pr{1-\frac{1}{\sqrt{\kappa}}}^{T_{t,k'}/2} \norm{y_{t,k'-1} - y^*(w_{t,k'})}
\\&\le
\sqrt{1+\kappa}\pr{1-\frac{1}{\sqrt{\kappa}}}^{T_{t,k'}/2} (\norm{y_{t,k'-1} - y^*(w_{t,k'-1})} + \norm{y^*(w_{t,k'-1}) - y^*(w_{t,k'})})
\\&\le
\sqrt{1+\kappa}\pr{1-\frac{1}{\sqrt{\kappa}}}^{T_{t,k'}/2} \pr{\frac{\sigma}{2\tilde L} + \kappa \norm{w_{t,k'-1} - w_{t,k'}}}
\\&\le
\sqrt{1+\kappa}\pr{1-\frac{1}{\sqrt{\kappa}}}^{T_{t,k'}/2} \pr{\frac{\sigma}{2\tilde L} + 2\kappa B}
\\&\le
\frac{\sigma}{2\tilde L}
\ ,
\end{align*}
where the first inequality is based on Lemma~\ref{lem:AGD}, the second one uses triangle inequality, the third one is based on induction hypothesis and Lemma~\ref{lem:y*-Lip}, the fourth one uses~\eqref{2d}, and the last step use the definition of $T_{t,k}$. Therefore, \eqref{equ:cond1} in Condition~\ref{con:4.1} can hold.

Suppose Lemma~\ref{lem:used in proof T_bound1} and ~\eqref{equ:cond1} in Condition~\ref{con:4.1} hold for any $t \le t'-1$, then we have shown that when we choose $T_{t,k}'$ as defined in ~\eqref{equ:T_tk'}, then ~\eqref{equ:cond2} 
 in Condition~\ref{con:4.1} can hold. Thus, from Lemma~\ref{lem:bias of gradient} we obtain that:
 \begin{equation}\label{equ:bias}
\| \nabla\Phi(w_{t,k})- \hat{\nabla}\Phi(w_{t,k}) \|_2 \le \sigma
\ .
 \end{equation}
 We claim that for any $t$, we can find some constant $C$ to satisfy:
 \begin{equation}\label{equ:real bound gradient}
 \norm{\Phi(w_{t,\fK-1})}\le C\ .
 \end{equation}
 Otherwise, ~\eqref{equ:gradient bound} in Lemma~\ref{lem:3.2} shows that $\Phi(w_{t,\fK})$ can go to $-\infty$ and contradict with the assumption $\min_{x\in\mathbf{R}^{d_x}} \Phi(x) > -\infty$.
 
 For any epoch $t \le t'-1$, we have
\begin{equation}\label{equ:y* initial bound}
\begin{aligned}
&\quad\norm{x_{t,\fK}-x_{t,0}}\\
&= \norm{x_{t,\fK} - x_{t,\fK-1} + x_{t,\fK-1} - x_{t,0}}\\
&= \norm{(1-\theta)(x_{t,\fK-1} - x_{t,\fK-2}) - \eta \hat\nabla\Phi(w_{t,\fK-1}) + x_{t,\fK-1} - x_{t,0}}\\
&\le \norm{x_{t,\fK-1}-x_{t,\fK-2}} + \norm{x_{t,\fK-1}-x_{t,0}} + \eta\norm{\hat\nabla\Phi(w_{t,\fK-1)}}\\
&\le  2B + \eta\norm{\hat{\nabla}\Phi(w_{t,\fK-1}) - \nabla\Phi(w_{t,\fK-1}) + \nabla\Phi(w_{t,\fK-1})}\\
&\le 2B + \eta\norm{\hat{\nabla}\Phi(w_{t,\fK-1}) - \nabla\Phi(w_{t,\fK-1})} + \eta\norm{\nabla\Phi(w_{t,\fK-1})}\\
&\le 2B + \eta \sigma + \eta\norm{\nabla\Phi(w_{t,\fK-1})}\\
&\le 2B + \eta(\sigma + C)
\end{aligned}
\end{equation}
for some constant $C$. Here we use triangle inequality in the first inequality; ~\eqref{2b} in the second one; triangle inequality again in the third one; ~\eqref{equ:bias} in the fourth one and ~\eqref{equ:real bound gradient} in the last one.

Then for $t'$-th epoch, we have
\begin{align*}
\norm{y^*(w_{t',-1})-y^*(x_{0,0})}&\le\kappa\norm{w_{t',-1}-x_{0,0}}\\
&=\kappa \norm{x_{t',0}-x_{0,0}}\\
&=\kappa \norm{x_{t'-1,\fK} - x_{0,0}}\\
&\le \kappa (\norm{x_{t'-1,0} - x_{0,0}} + \norm{x_{t'-1,\fK} - x_{t'-1,0}})\\
&\le \kappa (\norm{x_{t'-1,0} - x_{1,0}} + (2B + \eta\sigma + \eta C))\\
&\le (2B + \eta\sigma + \eta C)\kappa t\ ,
\end{align*}
where the first inequality is based on the Lipschitz continuous of  $y^*(x)$ shown in Lemma~\ref{lem:y*-Lip}; the second one uses triangle inequality; the third one is based on~\eqref{equ:y* initial bound}, and the last one uses induction.
Then we have
\begin{align*}
\norm{y^*(w_{t',-1})} &\le \norm{y^*(x_{0,0})} + B\kappa t'\\
&\le \norm{y^*(x_{0,0})} + \frac{(2B + \eta\sigma + \eta C)\kappa\Delta \sqrt{\tilde\rho}}{\epsilon^{3/2}}
,
\end{align*}
where we use Lemma~\ref{lem:epoch_num1} in the last inequality. 

Similarly with the case $t=0$, we use induction with respect to $k$ again, we have that \eqref{equ:cond1} in Condition~\ref{con:4.1} hold.

This also finishes the proof for Lemma~\ref{lem:used in proof T_bound1}. 

The total gradient calls from AGD in Algorithm~\ref{alg:AHGD} in one epoch satisfies
\begin{align*}
\sum_{k=-1}^{\fK-1}T_{t,k}
&\le
2\sqrt{\kappa}\left(\frac{T_{-1}}{2\sqrt{\kappa}} + \sum_{k=0}^{\fK-1}\log\pr{\sqrt{\kappa+1} + \frac{4\tilde L\kappa\sqrt{\kappa+1} B}{\sigma}} \right) + \fK +1
\\&=
2\sqrt{\kappa} \left(\frac{T_{-1}}{2\sqrt{\kappa}} + \fK \log\pr{\sqrt{\kappa+1} + \frac{4\tilde L\kappa\sqrt{\kappa+1} B}{\sigma}} \right) + \fK +1
\\&=
2\sqrt{\kappa}\fK \pr{\frac{1}{\fK}\log\pr{\frac{2\tilde L\sqrt{\kappa+1}}{\sigma}\hat{C}} +\log\pr{\sqrt{\kappa+1} + \frac{4\tilde L\kappa\sqrt{\kappa+1} B}{\sigma}} } + \fK + 1
.
\end{align*}

This completes our proof of Proposition~\ref{thm:T_bound1}.
\end{proof}
\pb\subsection{Proof of Corollary~\ref{cor:one-order}}
\begin{proof}
Theorem~\ref{thm:one-oder} says that \texttt{RAHGD} can output an $\epsilon$-FOSP within at most $\fO\pr{\Delta\tilde L^{1/2}\tilde\rho^{1/4}\epsilon^{-7/4}}$ iterations in the outer loop. Then we have
\begin{align*}
Gc(f,\epsilon) = \fO\pr{\dfrac{\Delta\tilde L^{1/2}\tilde 
\rho^{1/4}}{\epsilon^{7/4}}}
\qquad \text{and} \qquad
JV(g,\epsilon) = \fO\pr{\dfrac{\Delta\tilde L^{1/2}\tilde 
\rho^{1/4}}{\epsilon^{7/4}}}
\ .
\end{align*}
Recall that $\tilde L = \fO(\kappa^3)$ and $\tilde\rho = \fO(\kappa^5)$, we have
\begin{align*}
Gc(f,\epsilon) = \fO\pr{\kappa^{11/4}\epsilon^{-7/4}}
\qquad \text{and} \qquad
JV(g,\epsilon) = \fO\pr{\kappa^{11/4}\epsilon^{-7/4}}
\ .
\end{align*}
Gradients of $g(x,\cdot)$ and Hessian-vector products are occurred in AGD and CG respectively. Proposition~\ref{thm:T_bound1} shows that we only require $
\fO\pr{\sqrt{\kappa}\fK\log(\frac{1}{\epsilon})}
$ iterates of AGD and CG in one epoch to have Condition~\ref{con:4.1} hold.
From Lemma~\ref{lem:epoch_num1} we know that \texttt{RAHGD} will terminate in at most $
\Delta \sqrt{\tilde\rho}\epsilon^{-3/2}
$ epochs.
Recall that $
\fK \le K = \frac{1}{2}\big(\tilde L^2/(\tilde\rho\epsilon)\big)^{1/4}
$, we have
\begin{align*}
Gc(g,\epsilon) = \fO\pr{\dfrac{\Delta\tilde L^{1/2}\tilde 
\rho^{1/4}\kappa^{1/2}\log(1/\epsilon)}{\epsilon^{7/4}}}
\qquad \text{and} \qquad
HV(g,\epsilon) = \fO\pr{\dfrac{\Delta\tilde L^{1/2}\tilde 
\rho^{1/4}\kappa^{1/2}\log(1/\epsilon)}{\epsilon^{7/4}}}
\ .
\end{align*}
Hiding polylogarithmic factor and pluging $\tilde L = \fO(\kappa^3)$ and $\tilde\rho = \fO(\kappa^5)$ into it, we have
\begin{align*}
Gc(g,\epsilon) = \tilde\fO\pr{\kappa^{13/4}\epsilon^{-7/4}}
\qquad \text{and} \qquad
HV(g,\epsilon) = \tilde\fO\pr{\kappa^{13/4}\epsilon^{-7/4}}
\ .
\end{align*}
\end{proof}

\pb\section{Proofs for Section~\ref{sec_perturb}}\label{apd:sec_perturb}
In this section, we provide the proofs for theorems in Section~\ref{sec_perturb}. We first show that the number of epochs can be bounded. Then we prove that \texttt{PRAHGD} can output an $(\epsilon,\sqrt{\tilde\rho \epsilon\,})$-SOSP. Finally, we provide the oracle complexity analysis.

\pb\subsection{Proof of Theorem~\ref{thm:second-order}}
\label{sec:proof_sec_order}
We will first provide two lemmas. Lemma~\ref{lem:epoch_num2} shows that the number of epochs is bounded. Lemma~\ref{lem:maxlength} is prepared to show that \texttt{PRAHGD} can escape saddle point with high  probability. Finally we provide the proof of theorem~\ref{thm:second-order}.

\begin{lemma}
\label{lem:epoch_num2}
Consider the setting of Theorem~\ref{thm:second-order}, and we run Algorithm~\ref{alg:AHGD}, then the algorithm will terminate in at most $\fO\pr{\Delta\sqrt{\tilde \rho}\chi^5\epsilon^{-3/2}}$ epochs.
\end{lemma}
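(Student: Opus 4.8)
The plan is to mirror the argument behind Lemma~\ref{lem:epoch_num1}: I would show that every non-terminal epoch of \texttt{PRAHGD} decreases the objective by at least a fixed amount $\Gamma=\Theta\big(\epsilon^{3/2}/(\sqrt{\tilde\rho}\,\chi^5)\big)$, after which the bound on the number of epochs is immediate because $\Phi$ is bounded below by $\min_x\Phi(x)$. Throughout I would work inside a single non-terminal epoch (suppressing the epoch index $t$), where the ``if condition'' fires at some $1\le\fK\le K$ so that \eqref{2a}--\eqref{3b} hold, and Condition~\ref{con:4.1} is available via Proposition~\ref{thm:T_bound2}.

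First I would re-run the two per-epoch decrease lemmas under the parameters of Theorem~\ref{thm:second-order}. Lemma~\ref{lem:3.2} relies only on $\eta\le1/(4\tilde L)$ and $0\le\theta\le1$, hence applies unchanged: in the large-gradient case $\|\nabla\Phi(w_{\fK-1})\|_2>B/\eta$ it gives $\Phi(x_\fK)-\Phi(x_0)\le-B^2/(4\eta)+\sigma B+5\eta\sigma^2\fK/8$, and since $B=\Theta\big(\sqrt{\epsilon/\tilde\rho}\,/\chi^2\big)$, $\fK\le K=2\chi/\theta$ and $\sigma\le\epsilon^2$, the two error terms are of strictly lower order, leaving a decrease of order $\tilde LB^2=\Theta\big(\tilde L\epsilon/(\chi^4\tilde\rho)\big)\ge\Gamma$ (using $\tilde L\ge\sqrt{\tilde\rho\epsilon}$, which follows from the hypothesis $\epsilon\le\tilde L^2/\tilde\rho$). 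In the small-gradient case $\|\nabla\Phi(w_{\fK-1})\|_2\le B/\eta$, I would repeat the quadratic-model computations of Lemmas~\ref{lem:3.3}--\ref{lem:3.5} line by line; the controlling term is $-\tfrac{3\theta B^2}{8\eta\fK}\le-\tfrac{3\theta B^2}{8\eta K}$, and substituting $\theta=\tfrac12(\tilde\rho\epsilon\eta^2)^{1/4}$, $K=2\chi/\theta$ turns it into $-\Theta\big(\theta^2B^2/(\eta\chi)\big)=-\Theta\big(\epsilon^{3/2}/(\sqrt{\tilde\rho}\,\chi^5)\big)$, while the cubic remainder $\tfrac{\tilde\rho}{6}(3B+\eta\sigma)^3$ and every $\sigma$-term remains lower order because $B$ and $\sigma$ are small (e.g.\ $\tilde\rho B^3$ divided by $\theta^2B^2/(\eta\chi)$ is $\fO(1/\chi)$). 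So in either case $\Phi(x_\fK)-\Phi(x_0)\le-\Gamma$ with $\Gamma=\Theta\big(\epsilon^{3/2}/(\sqrt{\tilde\rho}\,\chi^5)\big)$; this is exactly where the extra $\chi^5$ relative to Lemma~\ref{lem:epoch_num1} appears ($B$ rescaled by $1/\chi^2$ gives $\chi^4$, and $K$ rescaled by $\Theta(\chi)$ gives one more $\chi$).

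It then remains to charge the perturbation against this decrease and conclude. Since $x_{t+1,0}=x_{t,\fK}+\xi$ with $\xi\in\sB(r)$ and $\Phi$ is $\tilde L$-gradient Lipschitz (Lemma~\ref{lem:gradient_Lip}), $\Phi(x_{t+1,0})\le\Phi(x_{t,\fK})+\|\nabla\Phi(x_{t,\fK})\|\,r+\tfrac{\tilde L}{2}r^2$, where $\|\nabla\Phi(x_{t,\fK})\|$ is bounded by a parameter-dependent constant $C$; the existence of such a $C$ I would argue exactly as in the proof of Proposition~\ref{thm:T_bound1} (cf.\ \eqref{equ:real bound gradient}), since if $\|\nabla\Phi\|$ were unbounded along the run the telescoped inequality \eqref{equ:gradient bound} in Lemma~\ref{lem:3.2} would push $\Phi\to-\infty$, contradicting boundedness below. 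The prescribed bounds $r\le\min\{\tilde LB^2/(4C),\ \theta B/(20K),\ \sqrt{\theta B^2/(2K)}\}$ then make $\|\nabla\Phi(x_{t,\fK})\|\,r+\tfrac{\tilde L}{2}r^2\le\Gamma/2$, so the net change over a non-terminal epoch is $\Phi(x_{t+1,0})-\Phi(x_{t,0})\le-\Gamma/2$; summing over the $N-1$ non-terminal epochs gives $\min_x\Phi(x)-\Phi(x_{\rm{int}})\le-(N-1)\Gamma/2$, i.e.\ $N\le 1+2\Delta/\Gamma=\fO\big(\Delta\sqrt{\tilde\rho}\,\chi^5\epsilon^{-3/2}\big)$. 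I expect the main obstacle to be this bookkeeping rather than a new idea: one must verify that all $\sigma$- and perturbation-induced terms fall below $\Gamma$ \emph{simultaneously} under the tightly coupled parameters of Theorem~\ref{thm:second-order}, and in particular resolve the apparent circularity that $r$ is defined in terms of $C$ while $C$ is only controlled once the algorithm is known to make steady progress — which the ``otherwise $\Phi\to-\infty$'' argument above does.
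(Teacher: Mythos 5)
Your proposal is correct and follows essentially the same route as the paper's proof: the same large-/small-gradient case split via Lemmas~\ref{lem:3.2} and~\ref{lem:3.5} under the Theorem~\ref{thm:second-order} parameters, the same charging of the perturbation via $\tilde L$-gradient Lipschitzness together with a contradiction argument ($\Phi\to-\infty$) to bound $\norm{\nabla\Phi(x_{t,\fK})}$ by a constant $C$ entering the definition of $r$, and the same resulting per-epoch decrease of order $\epsilon^{3/2}/(\sqrt{\tilde\rho}\,\chi^5)$. The only cosmetic difference is that the paper bounds $\norm{\nabla\Phi(x_{t,\fK})}$ by considering a single hypothetical gradient step from $x_{t,\fK}$ rather than the telescoped inequality~\eqref{equ:gradient bound}, but both are the same style of argument and yield the same conclusion.
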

\begin{proof}

From the Lipschitz continuity of gradient, we have
\begin{align*}
\Phi(x_{t+1,0}) - \Phi(x_{t,\fK})&\le \inner{\nabla\Phi(x_{t,\fK})}{x_{t+1,0} - x_{t,\fK}} + \frac{\tilde L}{2}\norm{x_{t+1,0}-x_{t,\fK}}^2\\
&= \inner{\nabla\Phi(x_{t,\fK})}{\xi_t} + \frac{\tilde L}{2}\norm{\xi_t}^2\\
&\le \norm{\nabla\Phi(x_{t,\fK})}r + \frac{\tilde Lr^2}{2}\ .
\end{align*}
If $\norm{\nabla\Phi(w_{\fK-1})}> \frac{B}{\eta}$,  then Lemma~\ref{lem:3.2} means when the “if condition” triggers, we have 
\begin{equation}\label{equ:251}
\Phi(x_\fK) - \Phi(x_0) \le - \frac{B^2}{4\eta}+ \sigma B  + \frac{5\eta\sigma^2 K}{8}\ .
\end{equation}
We say that $\norm{\nabla\Phi(x_{t,\fK})}$ is bounded. Otherwise, one gradient descent step $z = x_{t,\fK} - \eta\nabla\Phi(x_{t,\fK})$ leads to
\begin{align*}
\Phi(z)&\le \Phi(x_{t,\fK}) + \inner{\nabla\Phi(x_{t,\fK})}{-\eta \nabla\Phi(x_{t,\fK})} + \frac{\tilde L \eta^2}{2}\norm{\nabla\Phi(x_{t,\fK})}^2\\
&= \Phi(x_{t,\fK}) - \frac{7\eta}{8}\norm{\nabla\Phi(x_{t,\fK})}^2\ ,
\end{align*}
which means $\Phi(z)\sim -\infty $ and contradicts with the assumption $\min_{x\in\BR^{d_x}}\Phi(x) > -\infty$. 
Let $\norm{\nabla\Phi(x_{t,\fK})} \le C$, then we have
\begin{equation}\label{equ:252}
    \begin{aligned}
\Phi(x_{t+1,0}) - \Phi(x_{t,\fK}) &\le Cr + \frac{\tilde Lr^2}{2} \le \frac{B^2}{8\eta}\ ,
\end{aligned}
\end{equation}
where we use the definition of $r$ in the second inequality.
Summing over \eqref{equ:251} and ~\eqref{equ:252}, we obtain
\begin{align*}
\Phi(x_{t+1,0}) - \Phi(x_{t,0}) &\le - \frac{B^2}{8\eta}+ \sigma B  + \frac{5\eta\sigma^2 K}{8}
\le - \frac{B^2}{8\eta}
= -\frac{\epsilon\tilde L}{165888\tilde \rho \chi^4}
\end{align*}
for all epochs. 
On the other hand, if $\norm{\nabla \Phi(w_{\fK-1})}\le \frac{B}{\eta}$, Lemma~\ref{lem:3.5} means
$$\Phi(x_\fK) - \Phi(x_0) \le -\frac{3\theta B^2}{8\eta K} +  \frac{5\eta K}{2\theta}(2\tilde{\rho}B^2+\sigma)^2 + \frac{\tilde\rho}{6} (3B + \eta\sigma)^3 + \frac{\eta K}{2\theta}\sigma^2.$$
We also have
\begin{align*}
\norm{\nabla\Phi(x_\fK)} &\le \norm{\nabla\Phi(w_{\fK-1})} + \norm{\nabla\Phi(x_\fK) - \nabla\Phi(w_{\fK-1})}\\
&\le \norm{\nabla\Phi(w_{\fK-1})} + \tilde L\norm{x_\fK - w_{\fK-1}}\\
&\le \norm{\nabla\Phi(w_{\fK-1})} + \tilde L\eta\pr{\norm{\nabla\Phi(w_{\fK-1})} + \norm{\hat{\nabla}\Phi(w_{\fK-1}) - \nabla\Phi(w_{\fK-1})}}\\
&\le \frac{B}{\eta} + \tilde LB + \frac{\sigma}{4}= \frac{5B}{4\eta} + \frac{\sigma}{4}\ .
\end{align*}
So we obatin
$$
\Phi(x_{t+1,0}) - \Phi(x_{t,\fK}) \le \frac{5Br}{4\eta} + \frac{\sigma r}{4} + \frac{\tilde Lr^2}{2}\le \frac{\theta B^2}{8\eta K}+  \frac{\sigma B^2}{4}
,
$$
and
\begin{align*}
\Phi(x_{t+1,0}) - \Phi(x_{t,0}) &\le  -\frac{\theta B^2}{4\eta K} +  \frac{5\eta K}{2\theta}(2\tilde{\rho}B^2+\sigma)^2 + \frac{\tilde\rho}{6} (3B + \eta\sigma)^3+ \frac{\eta K}{2\theta}\sigma^2 + \frac{\sigma B^2}{4} \\
&\le -\frac{\epsilon^{1.5}}{663552\sqrt{\tilde \rho}\chi^5}\ .
\end{align*}
Hence, the algorithm will terminate in at most $\fO\pr{{\Delta\sqrt{\tilde \rho}\chi^5}{\epsilon^{-3/2}}}$ epochs. 
\end{proof}
Before proving that \texttt{PRAHGD} can output an $(\epsilon,\sqrt{\tilde\rho\epsilon}\,)$-SOSP,  we first show the following lemma.
\begin{lemma}
\label{lem:maxlength}
Following the setting of Theorem~\ref{thm:second-order}, we additionally suppose that $\lambda_{\min}(\mH) < -\sqrt{\epsilon\tilde\rho}$, where $\mH = \nabla^2\Phi(x)$ for given $x\in\BR^{d_x}$. 
We suppose points $x'_0,x_0''\in\BR^{d_x}$ satisfy $\norm{x'_0-x}\le r$,  $\norm{x''_0-x}\le r$ and $x_0'-x_0'' = r_0e_1$, where $e_1$ is the minimum eigen-direction of $\mH$ and $r_0 = \frac{\zeta r}{\sqrt{d_{x}}}$.  
Running 
\emph{\texttt{PRAHGD}} in Algorithm~\ref{alg:AHGD} with initialization $x_{0,0}=x_0'$ and $x_{0,0}=x_0''$, respectively, then at least one of these two initial points leads to its iterations trigger the “if condition”.
\end{lemma}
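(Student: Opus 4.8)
The plan is to prove the contrapositive by a \emph{coupling} argument along the direction $e_1$ of most negative curvature: assume that \emph{neither} run triggers the ``if condition'' within its $K$ iterations, and show that the gap between the two trajectories along $e_1$ is amplified by the accelerated dynamics so much that it contradicts the confinement forced by ``no trigger''. Throughout I drop the epoch index $t=0$, write $\{x_k',w_k'\}$ and $\{x_k'',w_k''\}$ for the two runs, and set $\mH=\nabla^2\Phi(x)$, $\lambda_{\min}:=\lambda_{\min}(\mH)<-\sqrt{\tilde\rho\epsilon}$; recall also $\eta=\tfrac1{4\tilde L}$ and $\theta=\tfrac12(\tilde\rho\epsilon\eta^2)^{1/4}$ from Theorem~\ref{thm:second-order}.

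First I would extract what ``no trigger'' buys. If a run never triggers then $m\sum_{i=0}^{m-1}\norm{x_{i+1}-x_i}^2\le B^2$ for all $m\le K$, so by Cauchy--Schwarz $\norm{x_m-x_0}\le B$ and $\norm{w_m-x_0}\le 2B$; since $\norm{x_0'-x},\norm{x_0''-x}\le r\le B$, all the iterates $w_k',w_k''$ stay within $3B$ of $x$. For the difference $\hat x_k:=x_k'-x_k''$, $\hat w_k:=w_k'-w_k''$ the updates $x_{k+1}=w_k-\eta\hat\nabla\Phi(w_k)$ then give $\hat w_k=(2-\theta)\hat x_k-(1-\theta)\hat x_{k-1}$ and $\hat x_{k+1}=(\mI-\eta\mH)\hat w_k-\eta\omega_k$, where, by the $\tilde\rho$-Hessian-Lipschitz property (Lemma~\ref{lem:Hessian_Lip}) and Lemma~\ref{lem:bias of gradient}, $\norm{\omega_k}\le 3\tilde\rho B\norm{\hat w_k}+2\sigma$. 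Projecting onto the unit eigenvector $e_1$ and writing $p_k:=\inner{e_1}{\hat x_k}$, $q_k:=\inner{e_1}{\hat w_k}$, $\delta_k:=\inner{e_1}{\omega_k}$ yields the scalar momentum recursion $q_k=(2-\theta)p_k-(1-\theta)p_{k-1}$, $p_{k+1}=(1+\mu)q_k-\eta\delta_k$ with $\mu:=\eta|\lambda_{\min}|\ge\eta\sqrt{\tilde\rho\epsilon}=4\theta^2$ (and $\mu\le\eta\tilde L=\tfrac14$), and initial data $p_{-1}=p_0=r_0$ since $x_0'-x_0''=r_0e_1$ and $x_{0,-1}=x_{0,0}$.

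The engine is the error-free companion recursion $\bar p_{k+1}=(1+\mu)[(2-\theta)\bar p_k-(1-\theta)\bar p_{k-1}]$, $\bar p_{-1}=\bar p_0=r_0$. Its transfer matrix $\mA$ has characteristic polynomial $z^2-(1+\mu)(2-\theta)z+(1+\mu)(1-\theta)$ with discriminant $(1+\mu)\bigl(\theta^2+\mu(2-\theta)^2\bigr)\ge\mu>0$, hence real roots $\lambda_-<1<\lambda_+$ with $\lambda_+\ge 1+\theta/2$ (using $\mu\ge 4\theta^2$); expanding the initial vector in eigenvectors, the $\lambda_+$-coefficient equals $a_+=r_0(1-\lambda_-)/(\lambda_+-\lambda_-)$, which one checks stays in a fixed interval $[c_1r_0,c_2r_0]$ of absolute constants and the $\lambda_-$-coefficient is nonnegative, so $\bar p_k\ge \tfrac{c_1}{4}r_0\lambda_+^k$ and $|\bar q_k|$ is of order $r_0\lambda_+^k$. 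The heart of the proof is an induction over $k\le K$ (while no trigger has occurred) that $\norm{\hat w_k}\le 2|\bar q_k|$ and $|p_k-\bar p_k|\le\tfrac12\bar p_k$: unrolling $p_k-\bar p_k=-\eta\sum_{j<k}G_{k,j}\delta_j$ with the momentum Green's function $|G_{k,j}|\le C\lambda_+^{k-1-j}/(\lambda_+-\lambda_-)$ (and bounding the components of $\hat x_k$ orthogonal to $e_1$ the same way, since they vanish in the ideal dynamics and are sourced only by $\omega_j$) gives accumulated error at most $\bigl(C'\,\tfrac{\eta\tilde\rho BK}{\theta}+C''\,\tfrac{\eta\sigma}{\theta^2r_0}\bigr)r_0\lambda_+^k$; the parameter choices then tame both factors, since $B\propto\chi^{-2}\sqrt{\epsilon/\tilde\rho}$, $K=2\chi/\theta$ and $\theta\propto(\tilde\rho\epsilon\eta^2)^{1/4}$ give $\tfrac{\eta\tilde\rho BK}{\theta}=\fO(1/\chi)$, while $\sigma\le\tfrac{\tilde\rho B\zeta r\theta}{2\sqrt{d_x}}=\tfrac12\tilde\rho Br_0\theta$ gives $\tfrac{\eta\sigma}{\theta^2r_0}=\fO(1/\chi^2)$, both below any prescribed small constant once $\chi$ exceeds an absolute constant. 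This closes the induction.

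Finally I would derive the contradiction: the induction gives $|p_K|\ge\tfrac12\bar p_K\ge\tfrac{c_1}{8}r_0\lambda_+^K$, whereas confinement gives $|p_K|\le\norm{\hat x_K}\le\norm{x_K'-x_0'}+\norm{x_0'-x_0''}+\norm{x_0''-x_K''}\le 2B+r_0\le 3B$; since $\lambda_+\ge 1+\theta/2$ and $K=2\chi/\theta$, $\lambda_+^K\ge(1+\theta/2)^{2\chi/\theta}\ge 2^\chi$, and with $r_0=\zeta r/\sqrt{d_x}$ the ratio $B\sqrt{d_x}/(\zeta r)$ is only polynomial in $\kappa$, $1/\epsilon$ and $\chi$, so for $\chi=\fO(\log(d_x/(\zeta\epsilon)))$ with a sufficiently large absolute constant we get $\tfrac{c_1}{8}r_0\,2^\chi>3B$, a contradiction. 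Hence at least one of $x_0',x_0''$ triggers the ``if condition''. I expect the main obstacle to be the error-propagation induction of the third step: one must \emph{simultaneously} lower-bound the coupled $e_1$-mode and upper-bound the full difference $\norm{\hat w_k}$ (on which the Hessian-Lipschitz forcing depends linearly), and verify that the rather delicate choices of $B$ (with its $\chi^{-2}$ factor), $K$, $r$ and $\sigma$ keep the relevant multiplicative error factors uniformly $o(1)$ while $\lambda_+^K$ still outgrows every fixed polynomial in the problem parameters.
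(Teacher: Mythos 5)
Your proposal is correct and follows essentially the same route as the paper's proof: a two-run coupling along the minimum eigen-direction, the observation that ``no trigger'' confines both trajectories within $O(B)$ of their starts, an induction showing the accumulated perturbation (Hessian-Lipschitz forcing plus hypergradient inexactness $\sigma$) stays below half of the homogeneous accelerated growth $\propto r_0(1+\theta/2)^k$, and a final contradiction between that exponential lower bound on $\|x_K'-x_K''\|$ and the $O(B)$ confinement. The only difference is cosmetic: the paper imports the growth and monotonicity estimates for powers of the $2\times 2$ companion matrix from Lemmas~36 and 38 of \citet{jin2018accelerated}, whereas you derive them directly by diagonalizing the companion matrix.
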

\begin{proof}
Recall that the update rule of \texttt{PRAHGD} can be written as:
$$x_{k+1} = (2-\theta)x_k-(1-\theta)x_{k-1} - \eta \hat{\nabla}\Phi((2-\theta)x_k-(1-\theta)x_{k-1}).$$
We denote $z_k = x_k'-x_k''$, then 
\begin{align*}
z_{k+1} &= (2-\theta)z_k - (1-\theta)z_{k-1}-\eta(\hat{\nabla}\Phi(w_k')-\hat{\nabla}\Phi(w_k'')))\\
&= (2-\theta)(\mI - \eta \mH - \eta {\bf\Omega}_k)z_k - (1-\theta)(\mI-\eta \mH-\eta {\bf\Omega}_k)z_{k-1}-\eta(\varsigma_k - \varsigma_k') 
,
\end{align*}
where 
\begin{align*}
\small\begin{split}    
{\bf\Omega}_k = \int_0^1(\nabla^2\Phi(tw_k + (1-t)w_k')-K)\,{\rm d} t,~~ \varsigma_k'= \nabla\Phi(w_k') - \hat{\nabla}\Phi(w_k') ~~\text{and}~~\varsigma_k'' =\nabla\Phi(w_k'') - \hat{\nabla}\Phi(w_k'').     
\end{split}
\end{align*}
In the last step, we use
$$
\nabla\Phi(w_k') - \nabla\Phi(w_k'') = (\mH+{\bf\Omega}_k)(w_k'-w_k'') = (\mH+{\bf\Omega}_k)((2-\theta)z_k - (1-\theta)z_{t-1})\ .
$$
We thus get the update of $z_k$ in matrix form as follows
\begin{align*}
\begin{pmatrix}
z_{k+1}\\
z_k
\end{pmatrix} 
&= 
\begin{pmatrix}
(2-\theta)(\mI-\eta \mH)&-(1- \theta)(\mI-\eta \mH)\\
\mI& 0
\end{pmatrix}
\begin{pmatrix}
w_k\\
w_{k-1}
\end{pmatrix} \\
& \qquad + \eta 
\begin{pmatrix}
(2-\theta){\bf\Omega}_kz_k - (1-\theta){\bf\Omega}_kz_{k-1} +\varsigma_k' - \varsigma_k''\\
0
\end{pmatrix}\\
&=\mA 
\begin{pmatrix}
z_k\\
z_{k-1}
\end{pmatrix}
-\eta
\begin{pmatrix}
\omega_k\\
0
\end{pmatrix}
=\mA^{k+1}
\begin{pmatrix}
z_0\\
z_{-1}
\end{pmatrix}
-\eta\sum_{i=0}^k
\mA^{k-i}
\begin{pmatrix}
\omega_i\\
0
\end{pmatrix}\ ,
\end{align*}
where $\omega_k = (2-\theta){\bf\Omega}_kz_k - (1-\theta){\bf\Omega}_kz_{k-1} +\varsigma_k' - \varsigma_k''  $. 
Then we have
$$
z_k = 
\begin{pmatrix}
\mI & 0
\end{pmatrix}
\mA^{k}
\begin{pmatrix}
z_0\\
z_{0}
\end{pmatrix}
-\eta
\begin{pmatrix}
\mI & 0
\end{pmatrix}
\sum_{i=0}^{k-1}
\mA^{k-i-1}
\begin{pmatrix}
\omega_i\\
0
\end{pmatrix}\ .
$$
Assume that none of the iteration on $(x_0',x_1',\dots,x_K')$ and $(x_0'',x_1'',\dots,x_K'')$ trigger the “if condition”, then we have
\begin{equation}\label{equ:B.1}
\begin{aligned}
& \norm{x_k'-x_0'} \le B,\ \norm{w_k'-x_0'}\le 2B,\ \forall k \le K, \\
& \norm{x_k''-x_0''} \le B,\ \norm{w_k''-x_0''}\le 2B,\ \forall k \le K.
\end{aligned}
\end{equation}
Then we achieve
\begin{align*}
\norm{{\bf\Omega}_k} &\le \tilde\rho \max\pr{\norm{w_k'-x},\norm{w_k''-x}}\\
&\le \tilde\rho \max\pr{\norm{w_k'-x_0'},\norm{w_k''-x_0''}} + \tilde \rho r \le 3\tilde\rho B
\end{align*}
and
\begin{align*}
\norm{\omega_k} &\le 6\tilde \rho B \pr{\norm{z_k} + \norm{z_{k-1}}} + \norm{\varsigma_k' - \varsigma_k''}\\
&\le 6\tilde \rho B \pr{\norm{z_k} + \norm{z_{k-1}}} + 2\sigma,
\end{align*}
where we use Lemma~\ref{lem:bias of gradient} in the last step. We can show the following inequality for all $k\le K$ by induction:
$$
\norm{
\eta
\begin{pmatrix}
\mI &0
\end{pmatrix}
\sum_{i=0}^{k-1}\mA^{k-1-i}
\begin{pmatrix}
\omega_i\\
0
\end{pmatrix}
} 
\le
\frac{1}{2}
\norm{
\begin{pmatrix}
\mI &0
\end{pmatrix}
\mA^{k}
\begin{pmatrix}
z_0\\
z_0
\end{pmatrix}
}\ .
$$
It is easy to check the base case holds for $k=0$. Assume the inequality holds for all steps equal to or less than $k$. Then we have
\begin{align*}
\norm{z_k}\le \frac{3}{2} \norm{
\begin{pmatrix}
\mI &0
\end{pmatrix}
\mA^{k}
\begin{pmatrix}
z_0\\
z_0
\end{pmatrix}
}
\qquad\text{and}\qquad
\norm{\omega_k} \le 18\tilde \rho B 
\norm{
\begin{pmatrix}
\mI &0
\end{pmatrix}
\mA^{k}
\begin{pmatrix}
z_0\\
z_0
\end{pmatrix}
} + 2\sigma ,
\end{align*}
where we use the monotonicity of $\norm{\begin{pmatrix}
\mI &0 
\end{pmatrix}
\mA^{k}
\begin{pmatrix}
z_0\\
z_0
\end{pmatrix}} $ in $k$ (Lemma 38 in ~\cite{jin2018accelerated}) in the last inequality.

We define 
\begin{align*}   
\begin{pmatrix}
    a_k & -b_k
\end{pmatrix}
=
\begin{pmatrix}
1 & 0
\end{pmatrix}
\mA_{\min}^k
\quad\text{and}\quad
\mA_{\min}=
\begin{pmatrix}
(2-\theta)(1-\eta\lambda_{\min}) & -(1-\theta)(1-\eta\lambda_{\min})\\
1&0
\end{pmatrix},
\end{align*}
then
\begin{align*}
&\quad \norm{
\eta
\begin{pmatrix}
\mI &0
\end{pmatrix}
\sum_{i=0}^{k}\mA^{k-i}
\begin{pmatrix}
\omega_i\\
0
\end{pmatrix}
} \\
&\le 
\eta\sum_{i=0}^k
\norm{
\begin{pmatrix}
\mI &0
\end{pmatrix}
\sum_{i=0}^{k}\mA^{k-i}
\begin{pmatrix}
\mI\\
0
\end{pmatrix}
} 
\norm{\omega_i}\\
&\le \eta\sum_{i=0}^k
\norm{
\begin{pmatrix}
\mI &0
\end{pmatrix}
\sum_{i=0}^{k}\mA^{k-i}
\begin{pmatrix}
\mI\\
0
\end{pmatrix}
}  \pr{
18\tilde \rho B 
\norm{
\begin{pmatrix}
\mI &0
\end{pmatrix}
\mA^{i}
\begin{pmatrix}
z_0\\
z_0
\end{pmatrix}
} + 2\sigma
}\\
&\overset{({\rm a})}{=} \eta \sum_{i=0}^k |a_{k-i}|\ 
(\ 18\tilde\rho Br_0 |a_i - b_i| + 2\sigma)
\\
&\overset{({\rm b})}{\le}
\eta \sum_{i=0}^k |a_{k-i}|\ 
(\ 20\tilde\rho Br_0 |a_i - b_i| )
\\&
\overset{({\rm c})}{\le}
20\tilde\rho B\eta\sum_{i=0}^{k}\pr{\frac{2}{\theta}+k+1} |a_{k+1}-b_{k+1}|r_0\\
&\le 20\tilde\rho B\eta K\pr{\frac{2}{\theta}+K} \norm{
\begin{pmatrix}
\mI &0
\end{pmatrix}
\mA^{k+1}
\begin{pmatrix}
z_0\\
z_0
\end{pmatrix}}\ ,
\end{align*}
where 
the step $\overset{({\rm a})}{=}$ uses the fact that $z_0=r_0e_1$ is along the minimum eigenvector direction of $\mH$;
the step $\overset{b}{\le}$ is based on the fact that $\sigma  \le\tilde\rho Br_0 |a_i - b_i|$;
the step $\overset{c}{\le}$ uses Lemma 36 in \citep{jin2018accelerated}. 
From Lemma 38 in \citep{jin2018accelerated}, we have
\begin{align*}
|a_i-b_i| \ge \frac{\theta}{2}\pr{1 + \frac{\theta}{2}}^i 
\ge \frac{\theta}{2},
\end{align*}
 and thus $\tilde\rho Br_0 |a_i - b_i| \ge \dfrac{\tilde \rho B \zeta r \theta}{2\sqrt{d_x}} \ge \sigma$.
From the parameter settings, we have 
\begin{align*}
20\tilde\rho B\eta K\pr{\frac{2}{\theta}+K} \le \frac{1}{2}. 
\end{align*}
Therefore, we complete the induction, which yields
\begin{align*}
\norm{z_K}&\ge 
\norm{
\begin{pmatrix}
\mI & 0
\end{pmatrix}
\mA^{K}
\begin{pmatrix}
z_0\\
z_{0}
\end{pmatrix}}
-\norm{\eta
\begin{pmatrix}
\mI & 0
\end{pmatrix}
\sum_{i=0}^{K-1}
\mA^{K-i-1}
\begin{pmatrix}
\omega_i\\
0
\end{pmatrix}}\\
&\ge \frac{1}{2}\norm{
\begin{pmatrix}
\mI & 0
\end{pmatrix}
\mA^{K}
\begin{pmatrix}
z_0\\
z_{0}
\end{pmatrix}} = \frac{r_0}{2}|a_K-b_K|\\
&\ge \frac{\theta r_0}{4}\pr{1+\frac{\theta}{2}}^K \ge 5B,
\end{align*}
where we use Lemma 38 in \citep{jin2018accelerated} 
and $\eta\lambda_{\min}\le -\theta^2 $ in the third inequality and the last step comes from $K = \frac{2}{\theta}\log(\frac{20B}{\theta r_0})$. 
However, from \eqref{equ:B.1} we can obtain:
\begin{align*}
\norm{z_K} &\le \norm{x_K' - x_0'} + \norm{x_K''-x_0''} + \norm{x_0'-x_0''} \le 2B + 2r\le 4B,
\end{align*}
which leads to contradiction. 
Thus we conclude that at least one of the iteration triggers the “if condition” and we finish the proof.
\end{proof}

Having established the necessary groundwork, we are now prepared to present the proof of Theorem~\ref{thm:second-order}.
\begin{proof}
From Lemma~\ref{lem:epoch_num2}, we know that Algorithm~\ref{alg:AHGD} will terminate in at most $\fO\pr{{\Delta\sqrt{\tilde \rho}\chi^5}{\epsilon^{-3/2}}}$ epochs. Since each epoch needs at most $K = \fO
\pr{\chi\big({\tilde L}^2/(\epsilon\tilde \rho)\big)^{1/4}}$ gradient evaluations,
the total number of gradient
evaluations must be less than
$\fO\pr{{\Delta\tilde L^{1/2}\tilde 
\rho^{1/4}\chi^6}{\epsilon^{-1.75}}}$.\\
Now we consider the last epoch.
Following a similar methodology employed in the proof of Theorem~\ref{thm:one-oder}, 
we also have
$$\norm{\nabla\Phi(\hat{w})}\le 
\frac{2\sqrt{2}B}{\eta K^2} + 
\frac{2\theta B}{\eta K} + 4\tilde\rho B^2 +
\sigma \le \frac{\epsilon}{\chi^3} + \epsilon^2\le \epsilon\ .$$
\\
If $\lambda_{\min}(\nabla^2\Phi(x_{t,\fK})) \ge -\sqrt{\epsilon\tilde\rho }$, then from the  perturbation theory of eigenvalues of \citet{bhatia1997matrix}, we have
\begin{align*}
|\lambda_j(\nabla^2\Phi(\hat{w}_{t+1})) - \lambda_j(\nabla^2\Phi(x_{t,\fK}))|&\le \norm{\nabla^2\Phi(\hat{w}_{t+1})-\nabla^2\Phi(x_{t,\fK})}\\
&\le \tilde \rho\norm{\hat{w}_{t+1}-x_{t,\fK}}\\
&\le \tilde\rho\norm{\hat{w}_{t+1}-x_{t+1,0}} + \tilde \rho r\\
&\le 3\tilde\rho B
\end{align*}
for any $j$, where we use $\norm{\hat{w}_{t+1} - x_{t+1,0}}\le \frac{1}{K_0+1} \sum_{k=0}^{K_0}\norm{w_{t+1,k}-x_{t+1,0}}\le 2B$ in the last inequality. Then we have
\begin{align*}
\lambda_j(\nabla^2\Phi(\hat{w}_{t+1})) &\ge  \lambda_j(\nabla^2\Phi(x_{t,\fK})) - |\lambda_j(\nabla^2\Phi(\hat{w}_{t+1})) - \lambda_j(\nabla^2\Phi(x_{t,\fK}))| \\
&\ge -\sqrt{\epsilon\tilde\rho} - 3\tilde\rho B \ge -1.011\sqrt{\epsilon\tilde\rho}
.
\end{align*}
Now we consider $\lambda_{\min}(\nabla^2\Phi(x_{t,\fK})) < -\sqrt{\epsilon\tilde\rho }$. Define the stuck region in $\sB(r)$ centered at $x_{t,\fK}$ to be the set of points starting from which the “if condition” does not trigger in $K$ iterations, that is, the algorithm terminates and outputs a saddle point.  From Lemma~\ref{lem:maxlength}, we know that the length along the minimum eigen-direction of $\nabla^2\Phi(x_{t,\fK})$ is less than $r_0$. Therefore, the probability of the starting point $x_{t+1,0} = x_{t,\fK} + \xi_t$ located in the stuck region is less than 
$$\frac{r_0V_{d-1}(r)}{V_d(r)} \le \frac{r_0\sqrt{d}}{r}\le \zeta ,$$
where we let $r_0 = \frac{\zeta r}{\sqrt{d}}$.  Thus, the output $\hat{w}$ satisfies $\lambda_{\min}(\nabla^2\Phi(\hat{w}) \ge -1.011\sqrt{\epsilon\tilde\rho}$ with probability at least $1-\zeta$.
This completes our whole proof of Theorem~\ref{thm:second-order}.
\end{proof}
\pb\subsection{Proof of Proposition~\ref{thm:T_bound2}}
\label{sec:proof T_bound2}
The proof of Proposition~\ref{thm:T_bound2} is similar to that of Proposition~\ref{thm:T_bound1}.
We provide the proof for  Proposition~\ref{thm:T_bound2} as follows.
\begin{proof}
We first consider the iterations of CG in Algorithm~\ref{alg:AHGD} in one epoch. We choose $T_{t,k}'$ as 
\begin{equation}\label{equ:T_tk'2}
T_{t,k}' = \left\{
\begin{array}{ll}
\ceil{\frac{\sqrt{\kappa}+1}{2}\log\pr{\frac{4\ell\sqrt{\kappa}}{\sigma}\pr{\norm{v_{0,-1}} +  \frac{M}{\mu}}}}
,&k=0,
\\[0.3cm]
\ceil{\frac{\sqrt{\kappa}+1}{2}\log\pr{\frac{4\ell\sqrt{\kappa}}{\sigma}\pr{\frac{\sigma}{2\ell} + \frac{2M}{\mu}}}}
,& k \ge 1.
\end{array}
\right.
\end{equation}
Following the proof of that in Section~\ref{sec:proof_T_bound1} in exact fashions we arrive at that ~\eqref{equ:cond2} in Condition~\ref{con:4.1} can hold.

The total iterates of CG when running \texttt{PRAHGD} in Algorithm~\ref{alg:AHGD} in one epoch satisfies
\begin{align*}
\small\begin{split}    
&\quad \sum_{k=0}^{\fK-1}T_k' \\
&\le
\fK + \frac{\sqrt{\kappa}+1}{2}\left(\frac{2T_0'}{\sqrt{\kappa}+1} + \sum_{k=1}^{\fK-1} \log \pr{\frac{4\ell\sqrt{\kappa}}{\sigma}\pr{\frac{\sigma}{2\ell} + \frac{2M}{\mu}}} \right)
\\&=
\fK + \frac{\sqrt{\kappa}+1}{2}\left(\frac{2T_0'}{\sqrt{\kappa}+1} + \pr{\fK-1}\log \pr{\frac{4\ell\sqrt{\kappa}}{\sigma}\pr{\frac{\sigma}{2\ell} + \frac{2M}{\mu}}} \right)
\\&=
\fK + \frac{\sqrt{\kappa}+1}{2}\fK \left(\frac{1}{\fK}\log\pr{\frac{4\ell\sqrt{\kappa}}{\sigma}\pr{\norm{v_{0,-1}} + \frac{M}{\mu}}}
+ \pr{1-\frac{1}{\fK}} \log\pr{\frac{4\ell\sqrt{\kappa}}{\sigma}\pr{\frac{\sigma}{2\ell} + \frac{2M}{\mu}}}\right)
\ .
\end{split}
\end{align*}

Now we consider the iterations of AGD \texttt{PRAHGD} in Algorithm~\ref{alg:AHGD} in one epoch.

We first show the following lemma.
\begin{lemma}\label{lem:used in proof T_bound2}
Consider the setting of Theorem~\ref{thm:second-order}, and we run \emph{\texttt{PRAHGD}} in Algorithm~\ref{alg:AHGD}, then we have
$$\norm{y^*(w_{t,-1})} \le \tilde C$$
for any $t>0$, where $\tilde C = \norm{y^*(x_{0,0})} + {(2B +B^2+ \eta\sigma + \eta C)\kappa\Delta \sqrt{\tilde\rho}}{\epsilon^{-3/2}}  .$
\end{lemma}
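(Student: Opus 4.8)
The plan is to mirror the proof of the analogous Lemma~\ref{lem:used in proof T_bound1} from the \texttt{RAHGD} analysis almost verbatim; the only genuinely new ingredient is accounting for the per-epoch perturbation $\xi_t\sim\text{Unif}(\sB(r))$. Since $w_{t,-1}=x_{t,0}$ and $y^*(\cdot)$ is $\kappa$-Lipschitz continuous by Lemma~\ref{lem:y*-Lip}, it suffices to control the cumulative drift $\norm{x_{t,0}-x_{0,0}}$ across epochs and then write $\norm{y^*(w_{t,-1})}\le\norm{y^*(x_{0,0})}+\kappa\norm{x_{t,0}-x_{0,0}}$.

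For the per-epoch drift I would fix a non-final epoch $t$ and argue exactly as in~\eqref{equ:y* initial bound}: for $k<\fK$ the restarting condition has not yet triggered, so the pre-restart bounds~\eqref{2b} and~\eqref{2c} give $\norm{x_{t,\fK-1}-x_{t,\fK-2}}\le B$ and $\norm{x_{t,\fK-1}-x_{t,0}}\le B$; combining these with the update $x_{t,\fK}=w_{t,\fK-1}-\eta\hat\nabla\Phi(w_{t,\fK-1})$, the triangle inequality, the bias estimate $\norm{\hat\nabla\Phi(w_{t,\fK-1})-\nabla\Phi(w_{t,\fK-1})}\le\sigma$ from Lemma~\ref{lem:bias of gradient}, and the uniform gradient bound $\norm{\nabla\Phi(w_{t,\fK-1})}\le C$ (which holds as in~\eqref{equ:real bound gradient}, since otherwise the descent inequality~\eqref{equ:gradient bound} inside Lemma~\ref{lem:3.2} would drive $\Phi$ to $-\infty$, contradicting boundedness below), one obtains $\norm{x_{t,\fK}-x_{t,0}}\le 2B+\eta\sigma+\eta C$. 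The perturbation step $x_{t+1,0}=x_{t,\fK}+\xi_t$ with $\norm{\xi_t}\le r$ then contributes at most $r$ more, and since the parameters of Theorem~\ref{thm:second-order} force $r$ to be small (bounded in particular by $B^2$), the per-epoch drift is $\norm{x_{t+1,0}-x_{t,0}}\le 2B+B^2+\eta\sigma+\eta C$.

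Telescoping this bound over $t=0,1,\dots,t'-1$ gives $\norm{x_{t',0}-x_{0,0}}\le(2B+B^2+\eta\sigma+\eta C)\,t'$, and Lemma~\ref{lem:epoch_num2} bounds the number of epochs $t'$ by $\fO(\Delta\sqrt{\tilde\rho}\,\epsilon^{-3/2})$ (absorbing the polylogarithmic $\chi$-factor into the stated constant). Multiplying by $\kappa$ and adding $\norm{y^*(x_{0,0})}$ then yields $\norm{y^*(w_{t',-1})}\le\tilde C$, which is the claim.

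The main obstacle is not any single inequality---each step above is a routine repetition of the corresponding step in Section~\ref{sec:proof_T_bound1}---but closing the induction cleanly: the gradient bound $\norm{\nabla\Phi(w_{t,\fK-1})}\le C$ is only available once Condition~\ref{con:4.1} is known to hold in epoch $t$, and Condition~\ref{con:4.1} is in turn guaranteed only after setting the AGD/CG iteration counts in terms of the very quantity $\tilde C$ we are trying to establish. As in the proof of Proposition~\ref{thm:T_bound1}, this circularity is resolved by a single joint induction on the epoch index $t$: assuming the conclusion of this lemma together with Condition~\ref{con:4.1} for all epochs $\le t'-1$ enables the drift estimate above for epoch $t'$, and then the AGD/CG convergence bounds (Lemmas~\ref{lem:AGD} and~\ref{lem:CG}) propagate Condition~\ref{con:4.1} to epoch $t'$. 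Ensuring that every quantity appearing in this induction is controlled simultaneously is where the care is needed.
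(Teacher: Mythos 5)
Your proposal matches the paper's own argument essentially step for step: the per-epoch drift bound $\norm{x_{t,\fK}-x_{t,0}}\le 2B+\eta(\sigma+C)$ derived exactly as in \eqref{equ:y* initial bound}, the extra $+r$ from the perturbation absorbed into the $B^2$ term, telescoping via the $\kappa$-Lipschitz continuity of $y^*$ and the epoch count from Lemma~\ref{lem:epoch_num2}, and the circularity between $\tilde C$, the AGD iteration counts, and Condition~\ref{con:4.1} resolved by the same joint induction on the epoch index used in Proposition~\ref{thm:T_bound1}. No substantive differences from the paper's proof.
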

Then  we choose $T_{t,k}$ as
\begin{equation}\label{equ:T_tk2}
T_{t,k} = \left\{
\begin{array}{ll}
\ceil{2\sqrt{\kappa}\log\pr{\frac{2\tilde L\sqrt{\kappa+1}}{\sigma}\tilde C}}
,&
k=-1
\\[0.3cm]
\ceil{2\sqrt{\kappa}\log\pr{\frac{2\tilde L\sqrt{\kappa+1}}{\sigma}\pr{\frac{\sigma}{2\tilde L}+2\kappa B}}}
,&
k \ge 0
\end{array}
\right.
\end{equation}
We will use induction to show that Lemma~\ref{lem:used in proof T_bound2} as well as ~\eqref{equ:cond1} in Condition~\ref{con:4.1} will hold.

For $t = 0$, Lemma~\ref{lem:used in proof T_bound2} hold trivially. Then we use induction with respect to $k$ to prove that
$$
\norm{y_{t,k} - y^*(w_{t,k})}\le \frac{\sigma}{2\tilde L}
$$
holds for any  $k \ge -1$. For $k = -1$, Lemma~\ref{lem:AGD} directly implies 
\begin{align*}
\norm{y_{t,-1} - y^*(w_{t,-1})} \le\frac{\norm{y^*(w_{t,-1)}}}{\tilde{C}}\cdot\frac{\sigma}{2\tilde L} \le\frac{\sigma}{2\tilde L}, 
\end{align*}
where the second inequality is based on Lemma~\ref{lem:used in proof T_bound2}.
Suppose it holds that $\norm{y_{t,k-1}-y^*(w_{t,k-1})}\le \frac{\sigma}{2\tilde L}$ for any $k \le k'-1$, then we have
\begin{align*}
&\quad
\norm{y_{t,k'} -y^*(w_{t,k'})}
\\&\le
\sqrt{1+\kappa}\pr{1-\frac{1}{\sqrt{\kappa}}}^{T_{t,k'}/2} \norm{y_{t,k'-1} - y^*(w_{t,k'})}
\\&\le
\sqrt{1+\kappa}\pr{1-\frac{1}{\sqrt{\kappa}}}^{T_{t,k'}/2} (\norm{y_{t,k'-1} - y^*(w_{t,k'-1})} + \norm{y^*(w_{t,k'-1}) - y^*(w_{t,k'})})
\\&\le
\sqrt{1+\kappa}\pr{1-\frac{1}{\sqrt{\kappa}}}^{T_{t,k'}/2} \pr{\frac{\sigma}{2\tilde L} + \kappa \norm{w_{t,k'-1} - w_{t,k'}}}
\\&\le
\sqrt{1+\kappa}\pr{1-\frac{1}{\sqrt{\kappa}}}^{T_{t,k'}/2} \pr{\frac{\sigma}{2\tilde L} + 2\kappa B}
\\&\le
\frac{\sigma}{2\tilde L}
\ ,
\end{align*}
where the first inequality is based on Lemma~\ref{lem:AGD}, the second one uses triangle inequality, the third one is based on induction hypothesis and Lemma~\ref{lem:y*-Lip}, the fourth one uses~\eqref{2d}, and the last step use the definition of $T_{t,k}$. Therefore, \eqref{equ:cond1} in Condition~\ref{con:4.1} can hold.

Suppose Lemma~\ref{lem:used in proof T_bound2} and ~\eqref{equ:cond1} in Condition~\ref{con:4.1} hold for any $t \le t'-1$, then we have shown that when we choose $T_{t,k}'$ as defined in ~\eqref{equ:T_tk'2}, then ~\eqref{equ:cond2} 
 in Condition~\ref{con:4.1} can hold. Thus, from Lemma~\ref{lem:bias of gradient} we obtain that:
 \begin{equation}\label{equ:bias2}
\| \nabla\Phi(w_{t,k})- \hat{\nabla}\Phi(w_{t,k}) \|_2 \le \sigma
\ .
 \end{equation}

 For any epoch $t \le t'-1$, we have
\begin{equation}\label{equ:y* initial bound2}
\begin{aligned}
&\quad\norm{x_{t,\fK}-x_{t,0}}\\
&= \norm{x_{t,\fK} - x_{t,\fK-1} + x_{t,\fK-1} - x_{t,0}}\\
&= \norm{(1-\theta)(x_{t,\fK-1} - x_{t,\fK-2}) - \eta \hat\nabla\Phi(w_{t,\fK-1}) + x_{t,\fK-1} - x_{t,0}}\\
&\le \norm{x_{t,\fK-1}-x_{t,\fK-2}} + \norm{x_{t,\fK-1}-x_{t,0}} + \eta\norm{\hat\nabla\Phi(w_{t,\fK-1)}}\\
&\le  2B + \eta\norm{\hat{\nabla}\Phi(w_{t,\fK-1}) - \nabla\Phi(w_{t,\fK-1}) + \nabla\Phi(w_{t,\fK-1})}\\
&\le 2B + \eta\norm{\hat{\nabla}\Phi(w_{t,\fK-1}) - \nabla\Phi(w_{t,\fK-1})} + \eta\norm{\nabla\Phi(w_{t,\fK-1})}\\
&\le 2B + \eta \sigma + \eta\norm{\nabla\Phi(w_{t,\fK-1})}\\
&\le 2B + \eta(\sigma + C)
\end{aligned}
\end{equation}
for some constant $C$. Here we use triangle inequality in the first inequality; ~\eqref{2b} in the second one; triangle inequality again in the third one; ~\eqref{equ:bias2} in the fourth one and ~\eqref{equ:real bound gradient} in the last one.

Then for $t'$-th epoch, we have
\begin{align*}
&\quad \norm{y^*(w_{t',-1})-y^* (x_{0,0})} \\
&\le\kappa\norm{w_{t',-1}-x_{0,0}}\\
&=\kappa \norm{x_{t',0}-x_{0,0}}\\
&=\kappa \norm{x_{t'-1,\fK} - x_{0,0}}\\
&\le \kappa (\norm{x_{t'-1,0} - x_{0,0}} + \norm{x_{t'-1,\fK} - x_{t'-1,0}} + r)\\
&\le \kappa (\norm{x_{t'-1,0} - x_{1,0}} + (2B +B^2+ \eta\sigma + \eta C))\\
&\le (2B +B^2+ \eta\sigma + \eta C)\kappa t\ ,
\end{align*}
where the first inequality is based on the Lipschitz continuous of  $y^*(x)$ shown in Lemma~\ref{lem:y*-Lip}; the second one uses triangle inequality; the third one is based on~\eqref{equ:y* initial bound2}, and the last one uses induction.
Then we have
\begin{align*}
\norm{y^*(w_{t',-1})} \le \norm{y^*(x_{0,0})} + B\kappa t'
\le \norm{y^*(x_{0,0})} + \frac{(2B+B^2 + \eta\sigma + \eta C)\kappa\Delta \sqrt{\tilde\rho}}{\epsilon^{3/2}},
\end{align*}
where we use Lemma~\ref{lem:epoch_num2} in the last inequality. 

Similarly with the case $t=0$, we use induction with respect to $k$ again,  and then we can prove that \eqref{equ:cond1} in Condition~\ref{con:4.1} hold.

This also completes the proof for Lemma~\ref{lem:used in proof T_bound2}. 

The total gradient calls from AGD in Algorithm~\ref{alg:AHGD} in one epoch satisfies
\begin{align*}
\sum_{k=-1}^{\fK-1}T_{t,k}
&\le
2\sqrt{\kappa}\left(\frac{T_{-1}}{2\sqrt{\kappa}} + \sum_{k=0}^{\fK-1}\log\pr{\sqrt{\kappa+1} + \frac{4\tilde L\kappa\sqrt{\kappa+1} B}{\sigma}} \right) + \fK +1
\\&=
2\sqrt{\kappa} \left(\frac{T_{-1}}{2\sqrt{\kappa}} + \fK \log\pr{\sqrt{\kappa+1} + \frac{4\tilde L\kappa\sqrt{\kappa+1} B}{\sigma}} \right) + \fK +1
\\&=
2\sqrt{\kappa}\fK \pr{\frac{1}{\fK}\log\pr{\frac{2\tilde L\sqrt{\kappa+1}}{\sigma}\tilde{C}} +\log\pr{\sqrt{\kappa+1} + \frac{4\tilde L\kappa\sqrt{\kappa+1} B}{\sigma}} } + \fK + 1
.
\end{align*}

This finishes our whole proof for Proposition~\ref{thm:T_bound2}.
\end{proof}

\pb\subsection{Proof of Corollary~\ref{cor:second-order}}
\begin{proof}
From Theorem~\ref{thm:second-order}, we have that \texttt{PRAHGD} in Algorithm~\ref{alg:AHGD} can find an $(\epsilon,\sqrt{\tilde\rho\epsilon}\,)$ SOSP within at most $\fO\pr{{\Delta\tilde L^{1/2}\tilde 
\rho^{1/4}\chi^6}{\epsilon^{-7/4}}}$ iterations in the outer loop. Then we have
\begin{align*}
Gc(f,\epsilon) = \fO\pr{\dfrac{\Delta\tilde L^{1/2}\tilde 
\rho^{1/4}\chi^6}{\epsilon^{7/4}}}
\qquad\text{and}\qquad
JV(g,\epsilon) = \fO\pr{\dfrac{\Delta\tilde L^{1/2}\tilde 
\rho^{1/4}\chi^6}{\epsilon^{7/4}}}\ .
\end{align*}
Omitting polylogarithmic factor and pluging $\tilde L = \fO(\kappa^3)$ and $\tilde\rho = \fO(\kappa^5)$ into it, we have
\begin{align*}
Gc(f,\epsilon) = \tilde\fO\pr{\kappa^{11/4}\epsilon^{-7/4}}
\qquad\text{and}\qquad
JV(g,\epsilon) = \tilde\fO\pr{\kappa^{11/4}\epsilon^{-7/4}}\ .
\end{align*}
Lemma~\ref{lem:epoch_num2} shows that \texttt{PRAHGD} in Algorithm~\ref{alg:AHGD} will terminate in at most $\fO\pr{\frac{\Delta\sqrt{\tilde \rho}\chi^5}{\epsilon^{3/2}}}$ epochs.
From Proposition~\ref{thm:T_bound2} we can obtain that for each epoch $t$, we have
the inner loops $$
\sum_{k=-1}^{\fK-1}T_{t,k}
\le
\fO\left( \kappa^{1/2}\fK\log(1/\epsilon) \right)
\qquad\text{and}\qquad
\sum_{k=0}^{\fK-1}T_{t,k}'
\le
\fO\left( \kappa^{1/2}\fK\log(1/\epsilon) \right)
$$
hold.
Then we have
\begin{align*}
Gc(g,\epsilon) = \fO\pr{\dfrac{\Delta\tilde L^{1/2}\tilde 
\rho^{1/4}\kappa^{1/2}\chi^6\log(1/\epsilon)}{\epsilon^{7/4}}}
~\text{and}~ 
HV(g,\epsilon) = \fO\pr{\dfrac{\Delta\tilde L^{1/2}\tilde 
\rho^{1/4}\kappa^{1/2}\chi^6\log(1/\epsilon)}{\epsilon^{7/4}}}\ ,
\end{align*}
where we use $\fK \le K = \fO
\big(\chi\big({\tilde L}^2/(\epsilon\tilde \rho)\big)^{1/4}\big)$.
Omit polylogarithmic factor and plug $\tilde L = \fO(\kappa^3)$ and~$\tilde\rho = \fO(\kappa^5)$ into it, we have
\begin{align*}
Gc(g,\epsilon) = \tilde\fO\pr{\kappa^{13/4}\epsilon^{-7/4}}
\qquad\text{and}\qquad
HV(g,\epsilon) = \tilde\fO\pr{\kappa^{13/4}\epsilon^{-7/4}}\ .
\end{align*}
\end{proof}
This completes our proof of Corollary~\ref{cor:second-order}.

\pb\section{Proofs for Section~\ref{sec_indications}}
In this section, we provide the proof of Theorem~\ref{thm:minimax}.
\begin{proof}
Lemma~\ref{lem:minimax} shows that in minimax problem settings, $\tilde L = (\kappa +1)\ell$ and $\tilde\rho = 4\sqrt{2}\kappa^3\rho$.
Recall that our \texttt{PRAGDA} evolves directly from \texttt{PRAHGD} ---- removing the CG step in \texttt{PRAHGD} because we do not need to compute the Hessian-vector products when solving the minmax problem.
Therefore, we can straightforwardly apply the theoretical  results for \texttt{PRAHGD}.

Applying Theorem~\ref{thm:second-order}, we have that Algorithm~\ref{alg:PRAGDA} can find an $\big(\epsilon,\fO(\kappa^{1.5}\sqrt{\epsilon}\,)\big)$-SOSP. 

Now we provide the gradient oracle calls complexities. From Lemma~\ref{lem:epoch_num2}, we know that Algorithm~\ref{alg:PRAGDA} will terminate in at most $\fO\big(\Delta\sqrt{\tilde \rho}\chi^5\epsilon^{-3/2}\big)$ epochs.
Proposition~\ref{thm:T_bound2} shows that, for each $t$, the total iteration number of AGD step satisfies:
\begin{align*}
\sum_{k=-1}^{\fK-1} T_{t,k}\le \fO(\kappa^{1/2}\fK\log(1/\epsilon))\ .
\end{align*}
Recall that $\fK\le K=\fO
\pr{\chi\big({\tilde L}^2/(\epsilon\tilde \rho)\big)^{1/4}}$, we have that the total gradient oracle calls is at most:
\begin{align*}
\fO\pr{\frac{\Delta \tilde\rho^{1/4}\tilde L^{1/2} \kappa^{1/2}\chi^6\log(1/\epsilon)}{\epsilon^{7/4}}}\ . 
\end{align*}
Hide polylogarithmic factor and plug $\tilde L$ and $\tilde\rho$ into it, we have the total gradient oracle calls within at most  $\tilde\fO(\kappa^{7/4}\epsilon^{-7/4})$.
\end{proof}

\pb\section{Empirical Studies}\label{sec:experiment}
We conducted a series of experiments to validate the theoretical contributions presented in this paper.
Specifically, we evaluated the effectiveness of our proposed algorithms, \texttt{RAHGD} and \texttt{PRAHGD}, by applying them to two different tasks: data hyper-cleaning for the MNIST dataset and hyperparameter optimization of logistic regression for the 20 News Group dataset.
Our experiments demonstrate that our algorithms outperform several established baseline algorithms, such as BA, AID-BiO, ITD-BiO, and PAID-BiO, with much faster convergence rates.
Additionally, we conducted a synthetic minimax problem experiment, wherein our \texttt{PRAGDA} algorithm exhibits a faster convergence rate when compared to iMCN.

\begin{figure}[!tb]
    \centering
     \includegraphics[width=8cm]{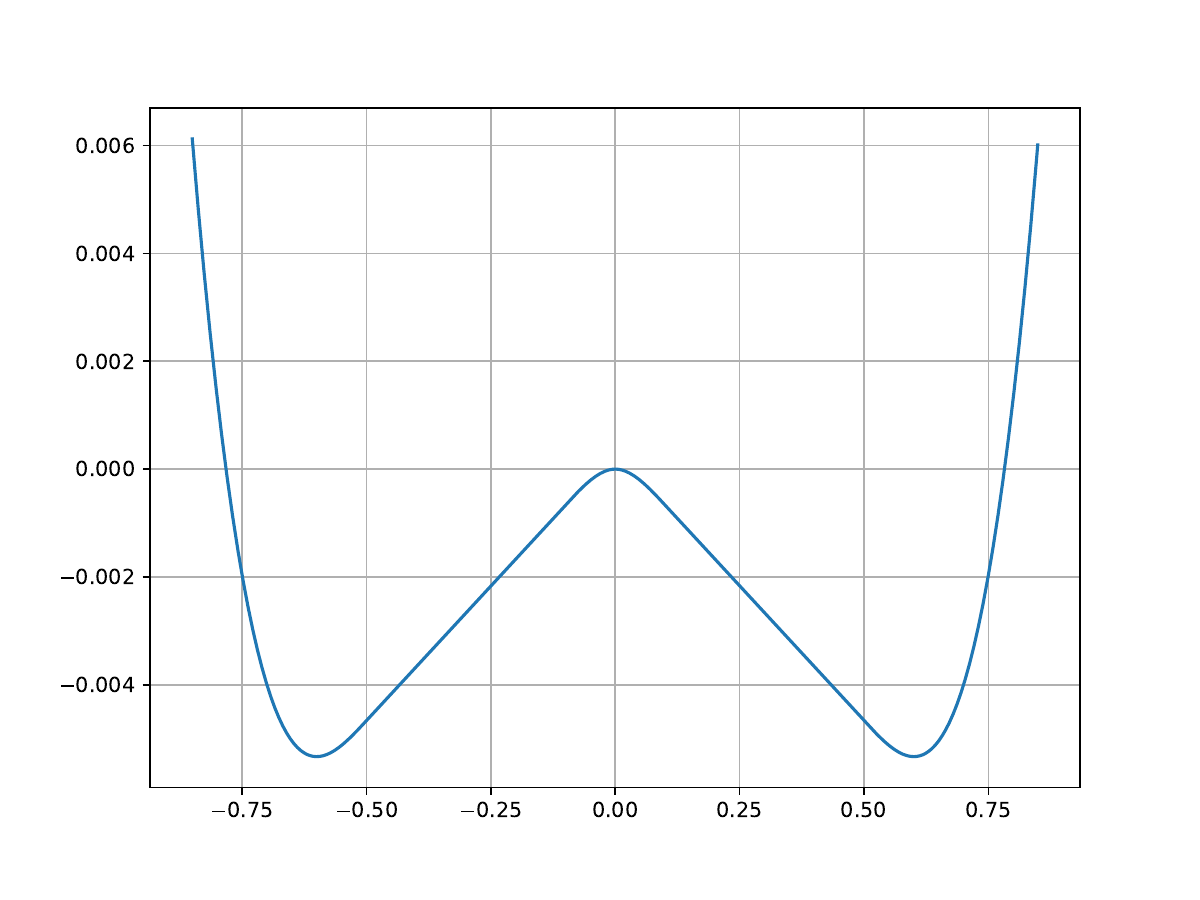}
    \caption{W-shape function designed by \citet{tripuraneni2018stochastic}}
    \label{fig:w_shape}
\end{figure}

\begin{figure}[!tb]
    \centering
    \subfigure[Initial point $(x_1,y_1)$]{
     \includegraphics[width=6cm]{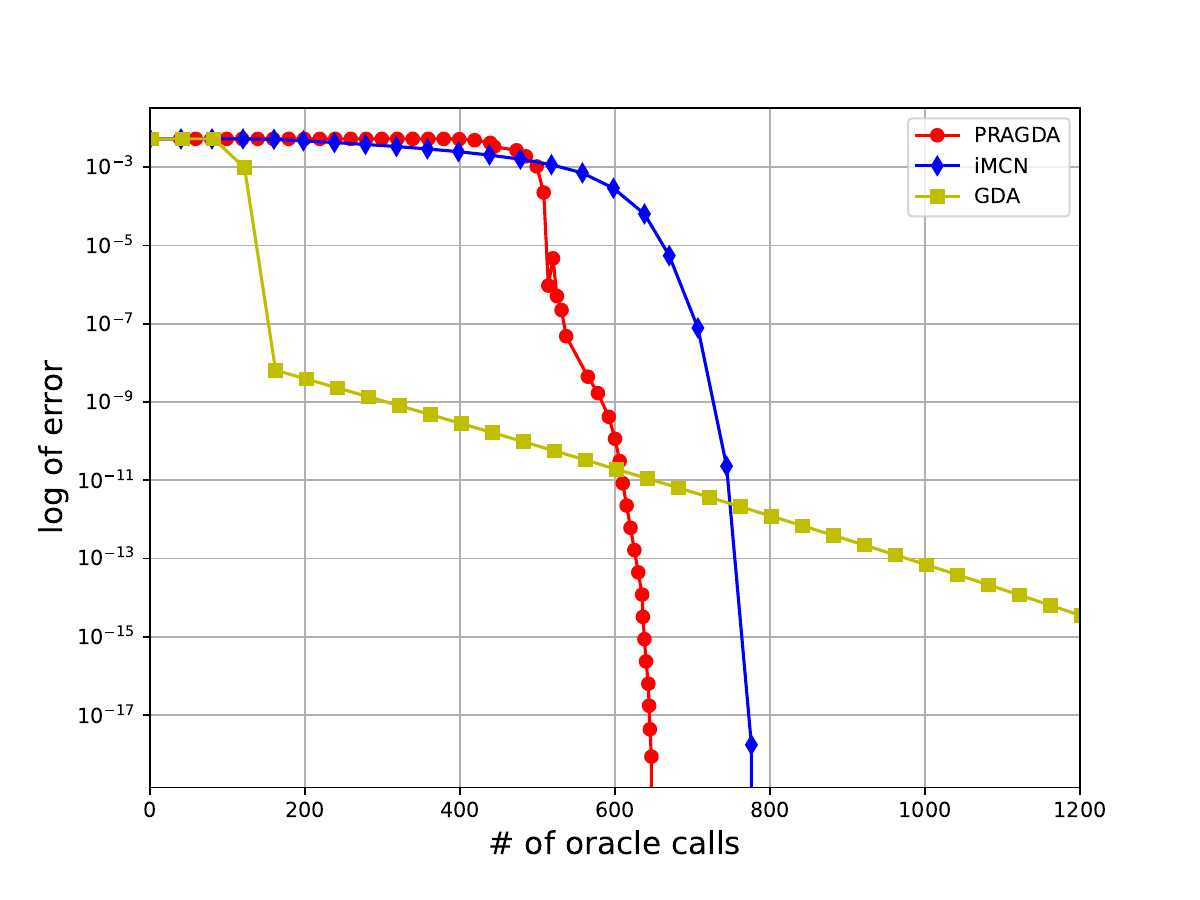}
    }
   \hspace{3mm}
   \subfigure[Initial point $(x_2,y_2)$]{
   \includegraphics[width=6cm]{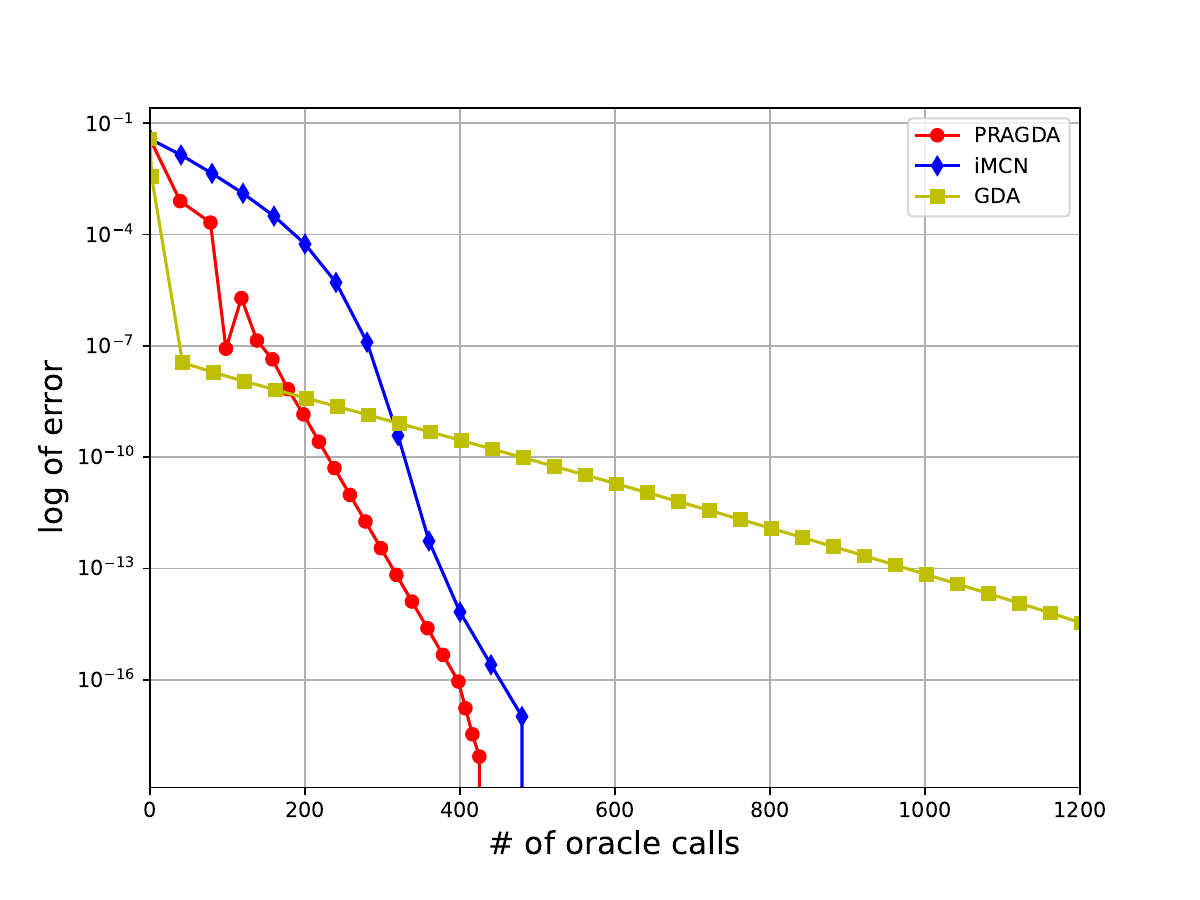}
   }
    \caption{Comparison of various minmax algorithms at different initial points}
    \label{fig:pragda}
\end{figure}

\pb\subsection{Synthetic Minimax Problem}
We construct the following nonconvex-strong-concave minimax problem:
$$
\min_{x\in \mathbb{R}^3}\max_{y\in\mathbb{R}^2}~f(x,y)
=
w(x_3) - 10y_1^2+x_1y_1 - 5y_2^2+x_2y_2
,
$$
where $x = [x_1,x_2,x_3]^T$ and $y = [y_1,y_2]^T$ and 
\begin{equation*}
w(x) = \left\{
\begin{array}{ll}
  \sqrt{\epsilon}(x+(L+1)\sqrt{\epsilon}\,)^2 - \frac{1}{3}(x+(L+1)\sqrt{\epsilon}\,)^3 -\frac{1}{3}(3L+1)\epsilon^{3/2},
&  x \le -L\sqrt{\epsilon};\\
\epsilon x + \frac{\epsilon^{3/2}}{3} ,&-L\sqrt{\epsilon} < x \le -\sqrt{\epsilon};\\
-\sqrt{\epsilon}x^2 - \frac{x^3}{3},& -\sqrt{\epsilon} < x\le 0;\\
-\sqrt{\epsilon}x^2 + \frac{x^3}{3},&   0<x\le \sqrt{\epsilon};\\
-\epsilon x + \frac{\epsilon^{3/2}}{3} ,&\sqrt{\epsilon} < x \le L\sqrt{\epsilon};\\
\sqrt{\epsilon}(x-(L+1)\sqrt{\epsilon}\,)^2 + \frac{1}{3}(x-(L+1)\sqrt{\epsilon}\,)^3 -\frac{1}{3}(3L+1)\epsilon^{3/2},
& L\sqrt{\epsilon} < x ;
\end{array}
\right.
\end{equation*}
is the W-shape-function~\citep{tripuraneni2018stochastic} and we set $\epsilon = 0.01, L = 5$ in our experiment.
We visualize the $w(\cdot)$ in Figure~\ref{fig:w_shape}.
It is easy to verify that $(x_0,y_0) = ([0,0,0]^\top,[0,0]^\top)$ is a saddle point of $f(x,y)$.
We construct our experiment with two different initial points:
\begin{align*}
(x_1,y_1) = \left([10^{-3},10^{-3},10^{-16}]^\top,[0,0]^\top\right) \qquad\text{and}\qquad (x_2,y_2) = \left([0,0,1]^\top,[0,0]^\top\right).    
\end{align*}
Note that $(x_1,y_1)$ is close to $(x_0,y_0)$ while $(x_2,y_2)$ is far from $(x_0,y_0)$.
We compare our \texttt{PRAGDA} with iMCN~\citep{luo2022finding} algorithm and classical GDA~\citep{lin2020gradient} algorithm.
The results are shown in Figure~\ref{fig:pragda}.
We use a grid search to choose the learning rate of AGD steps and GDA and outer-loop learning rate of \texttt{PRAGDA} from $\{c \times 10^i:c\in\{1,5\},i\in\{1,2,3\}\}$ and the momentum parameter from $\{c \times0.1:c\in\{1,2,3,4,5,6,7,8,9\} \}$.

\begin{figure}[!tb]
    \centering
    \subfigure[Corruption rate p = 0.2]{
     \includegraphics[width=6cm]{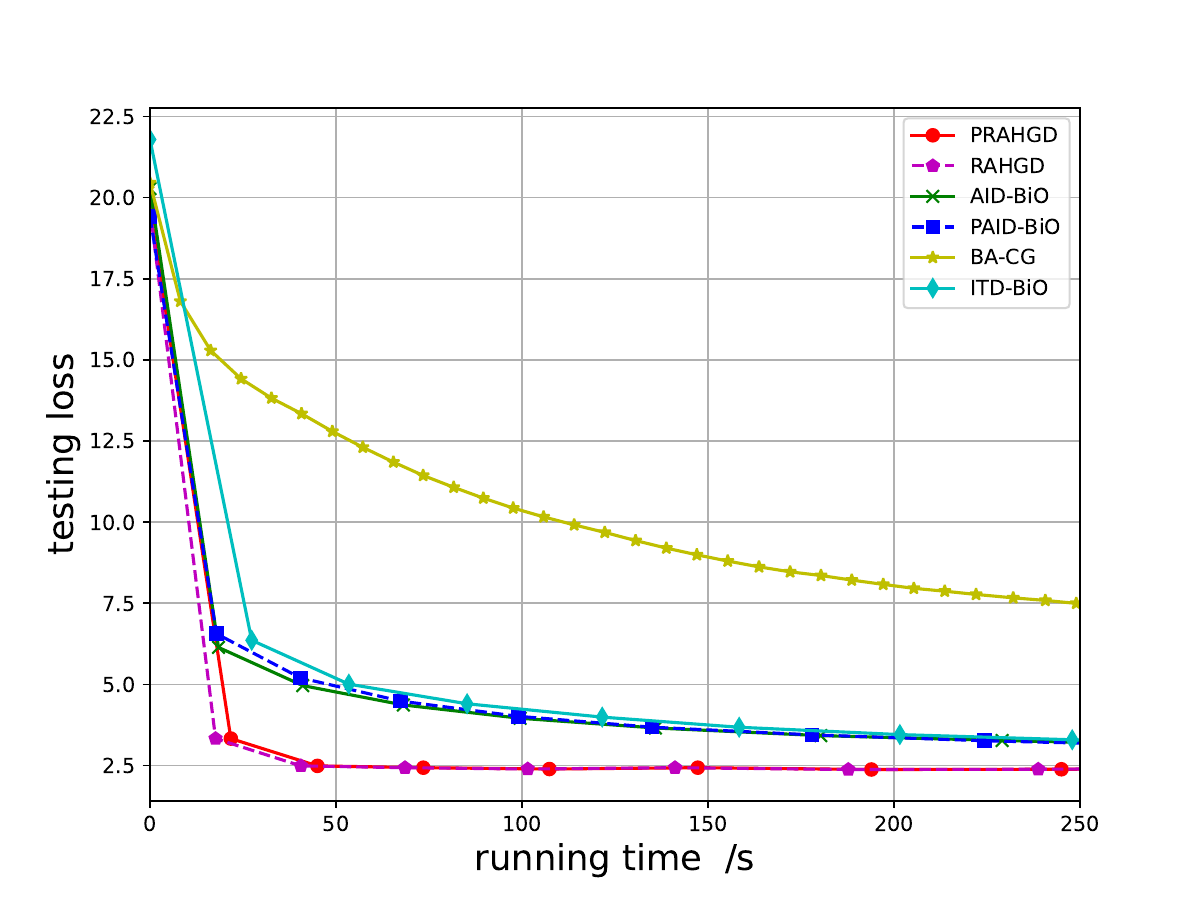}
    }
   \hspace{3mm}
   \subfigure[Corruption rate p = 0.4]{
   \includegraphics[width=6cm]{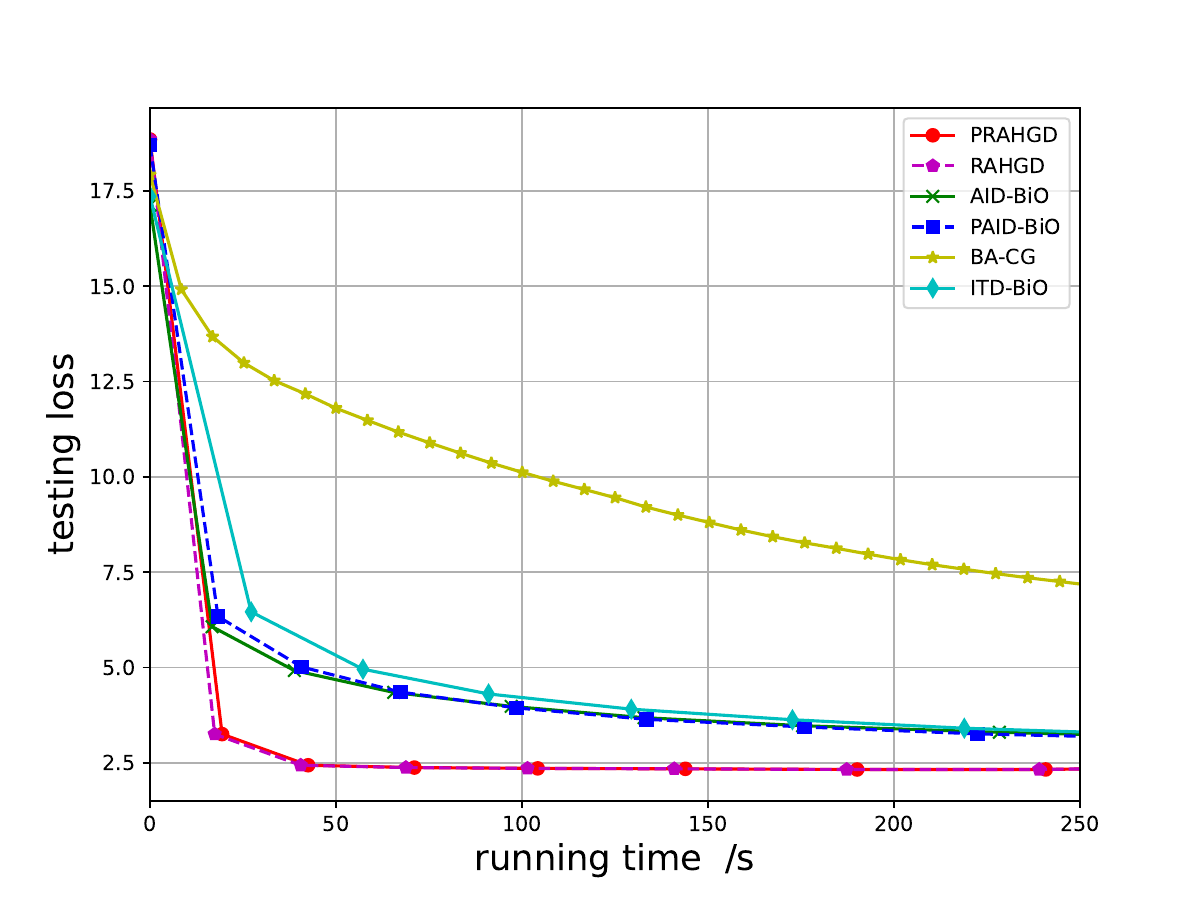}
   }
    \caption{Comparison of various bilevel algorithms for data hypercleaning at different corruption rate}
    \label{fig:data_cleaning}
\end{figure}

\pb\subsection{Data Hypercleaning}\label{sec:data_hypercleaning}
Data hypercleaning \citep{franceschi2017forward,shaban2019truncated} is a classic application of bilevel optimization. In practice, we may have a dataset with label noise and we could only offer some time or cost to cleanup a subset of the noise data.
To train a model in such a setting, we can treat the cleaned data as the validation set and the rest data as the training set. 
Then it can be transferred into a bilevel optimization:
\begin{equation}\label{equ:data hypercleaning}
\begin{aligned}
&
\min_{\lambda \in \mathbb{R}^{N_{\rm tr}}} f(W^*(\lambda),\lambda) \triangleq \frac{1}{|\fD_{\rm val}|}\sum_{(x_i,y_i)\in \fD_{\rm val}}-\log(y_i^\top W^*(\lambda)x_i)
\\&\textrm{s.t.}\quad 
W^*(\lambda) = \argmin_{W\in \mathbb{R}^{d_y \times d_x} }\  g(W,\lambda) \triangleq \frac{1}{|\fD_{\rm tr}|}\sum_{(x_i,y_i)\in \fD_{\rm tr}}-\sigma(\lambda_i)\log(y_i^\top Wx_i) + C_r ||W||^2
\ ,
\end{aligned}
\end{equation}
where $\fD_{\rm tr}=\{(x_i,y_i)\}$ is the training dataset, $\fD_{\rm val}=\{(x_i,y_i)\}$ is the validation dataset, $W$ is the weight of the  classifier, $\lambda_i \in \mathbb{R}$, $\sigma$ is the sigmoid function and $C_r$ is a regularization parameter. Following \citet{shaban2019truncated} and \citet{ji2021bilevel}, we choose $C_r=0.001$. 

We conducted the experiment on MNIST\citep{lecun1998gradient}. We have $x \in \mathbb{R}^{785}$, $y\in\mathbb{R}^{10}$ and $W\in\mathbb{R}^{10\times785}$ in \eqref{equ:data hypercleaning}.  
Training set contains 20,000 images, some of which have their labels randomly disrupted. We called the ratio of images with disrupted labels as corruption rate $p$. Validation set contains 5,000 images with correct labels. The testing set consists 10,000 images. The results are shown in Figure~\ref{fig:data_cleaning}. 

For BA algorithm proposed by \citet{ghadimi2018approximation}, we also use 
conjugate gradient descent method to compute the Hessian vector since they didn't specify it and we called it BA-CG in Figure~\ref{fig:data_cleaning}.  For all algorithms, We choose the inner-loop learning rate and outer-loop learning rate from $\{0.001,0.01,0.1,1,10\}$ and the iteration number of CG step from $\{3, 6, 12, 24\}$. For all algorithms except BA, we choose the iteration number of GD or AGD steps from $\{50, 100, 200, 500, 1000\}$  and for BA algorithm, as adopted by \citet{ghadimi2018approximation}, we choose the iteration number of GD steps from $\{\lceil c (k+1)^{1/4} \rceil:c\in\{0.5,1,2,4\} \}$ . We observe that both \texttt{RAHGD} and \texttt{PRAHGD} converge faster than other algorithms.

\begin{figure}[!tb]
    \centering
    \subfigure[testing accuracy vs.~running time]{
     \includegraphics[width=6cm]{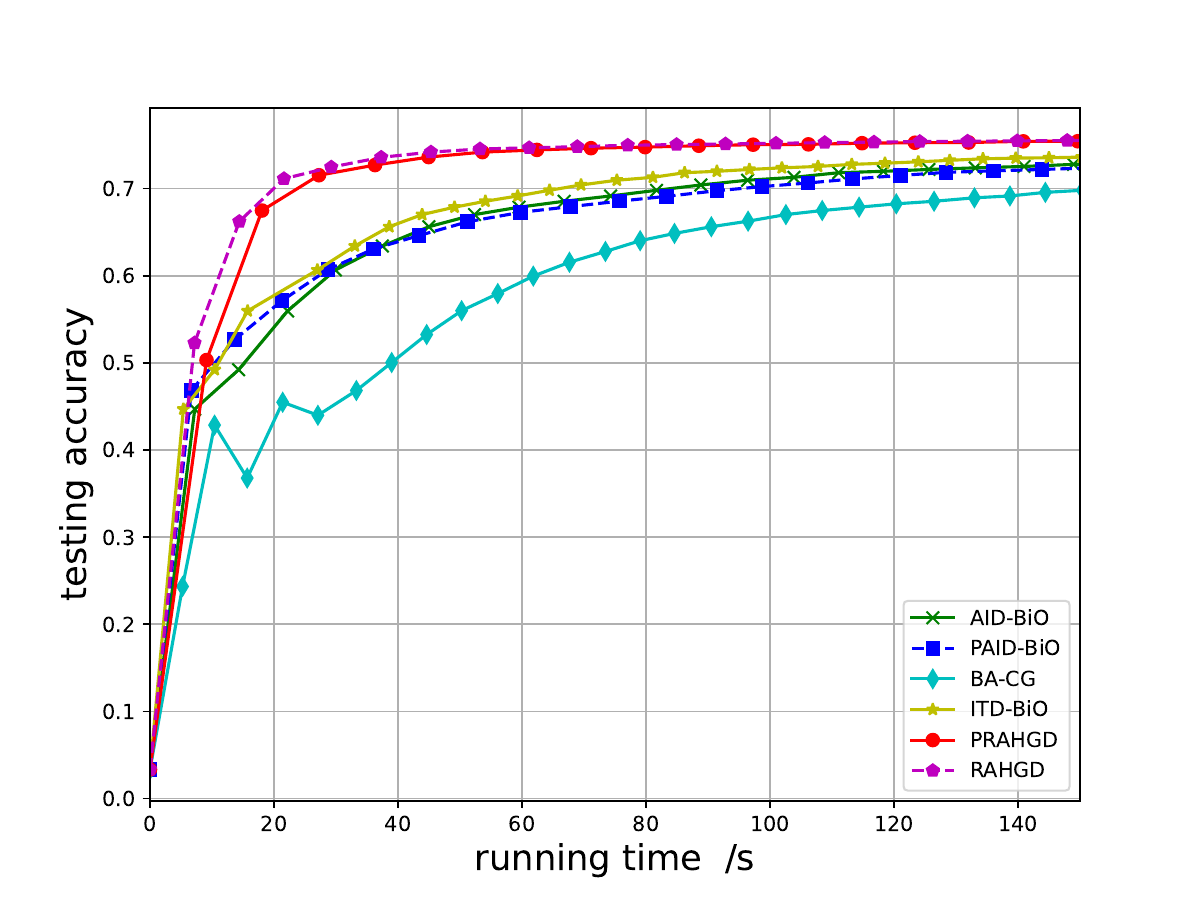}
    }
   \hspace{3mm}
   \subfigure[testing lose vs.~running time]{
   \includegraphics[width=6cm]{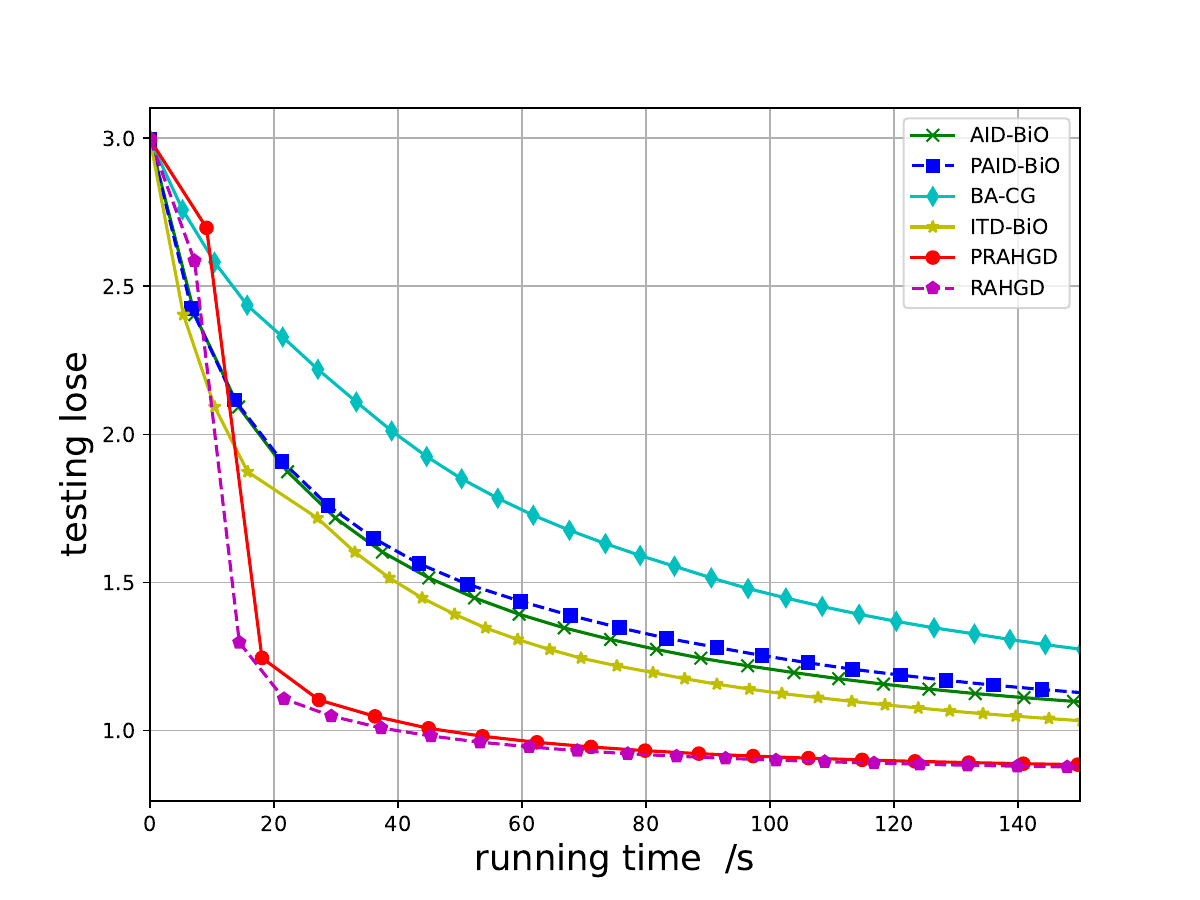}
   }

    \subfigure[testing accuracy vs.~\# of oracle calls]{
     \includegraphics[width=6cm]{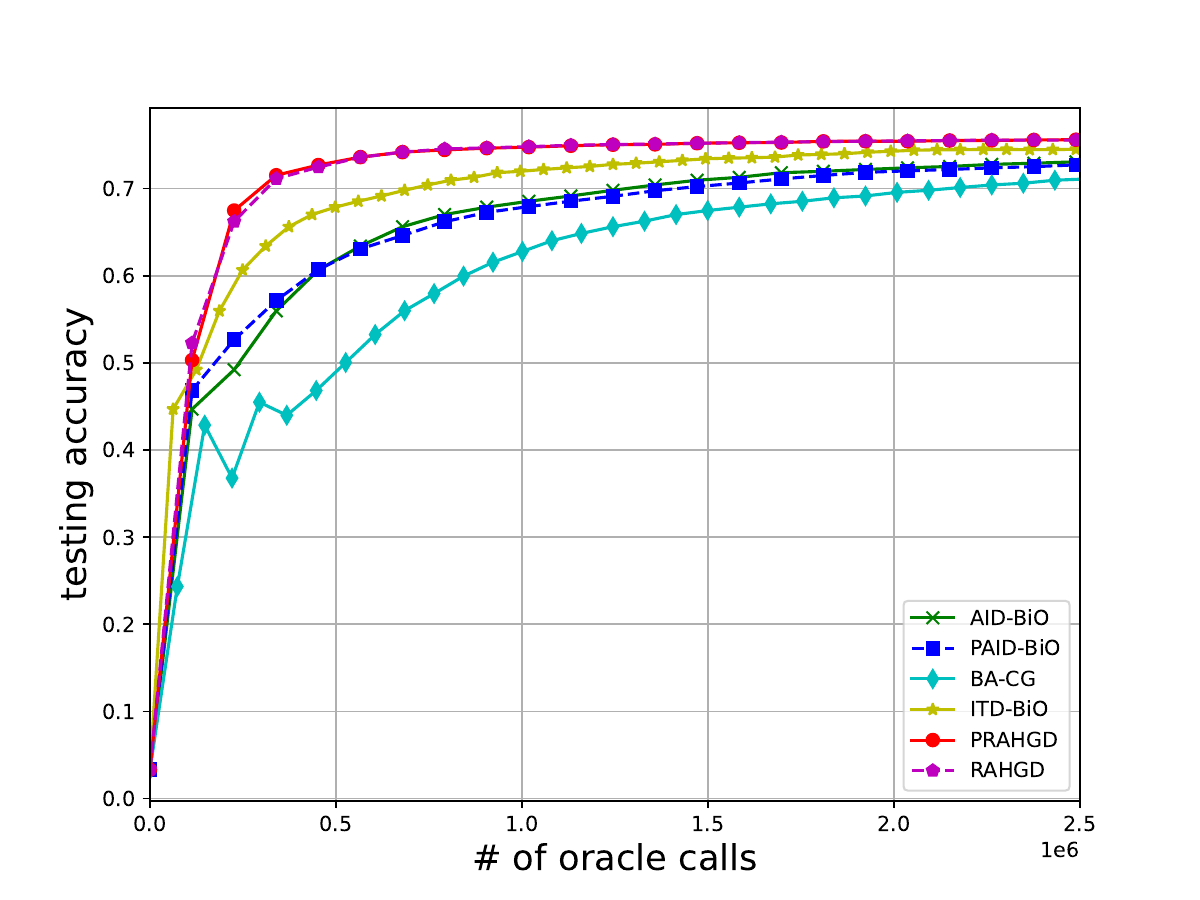}
    }
   \hspace{3mm}
   \subfigure[testing lose vs.~\# of oracle calls]{
   \includegraphics[width=6cm]{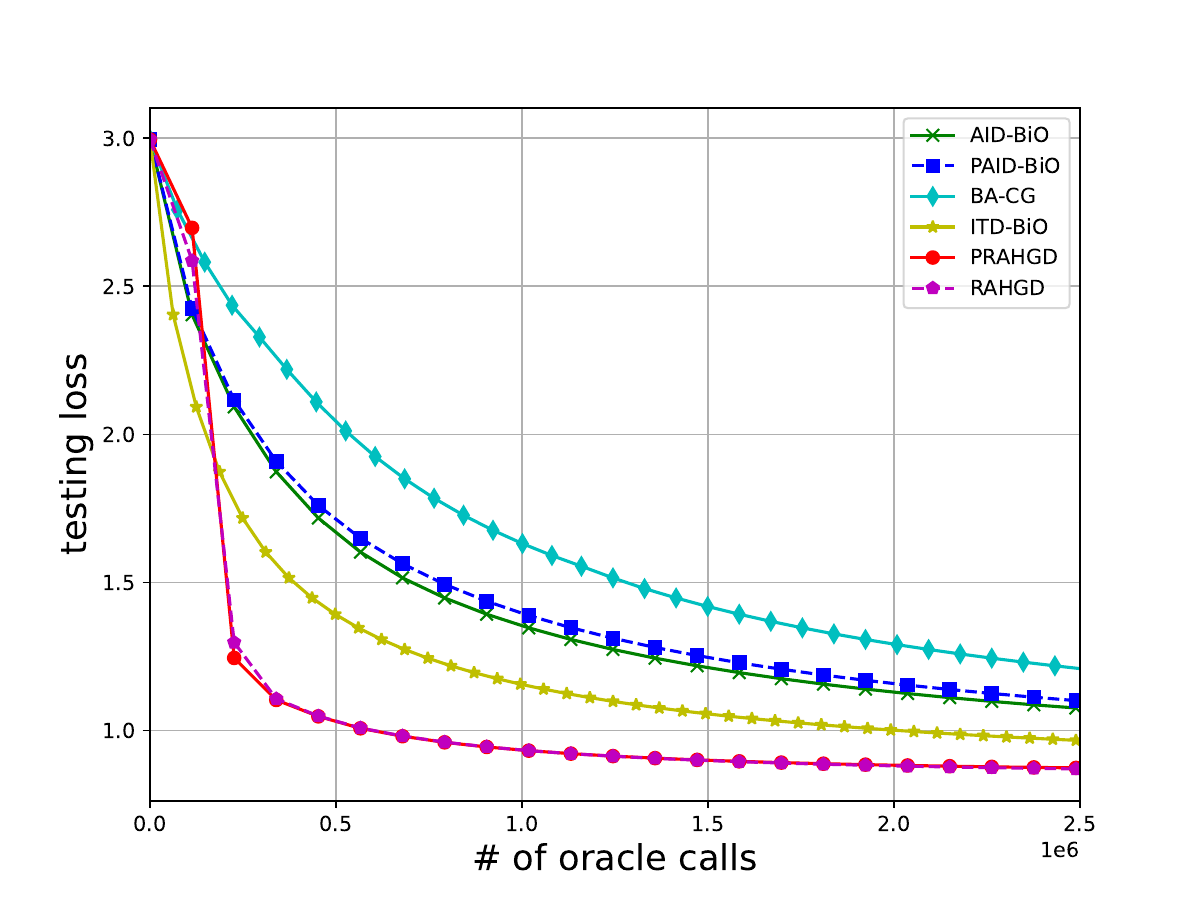}
   }
   \caption{Comparison of various bilevel algorithms on logistic regression on 20 Newsgroup dataset. Figure (a) and (b) show the results of testing accuracy and testing lose vs.running time respectively. Figure (c) and (d) show the results of testing accuracy and testing lose vs. \# of oracles calls respectively.}
    \label{fig:logistic_reg1}
\end{figure}

\pb\subsection{Hyperparemeter Optimization}
Hyperparameter optimization is a classical bilevel problem. The goal of hyperparameter optimization is to find the optimal hyperparameter to minimize the loss on the validation dataset.  We compare the performance of our algorithm \texttt{RAHGD} and \texttt{PRAHGD} with the baseline algorithms listed in Table~\ref{table:comparision_fir} and Table~\ref{table:comparision_sec} over a logistic regression problem on 20 News group dataset\cite{grazzi2020iteration}. 
This dataset consists of 18,846 news items divided into 20 topics, and features include 130,170 tf-idf sparse vectors. We divided the data into three parts: 5,657 for training, 5,657 for validation, and 7,532 for testing.
Then the objective function of this problem can be written in the following form.
\begin{align*}
&
\min_{\lambda\in\mathbb{R}^p}~%
\frac{1}{|\fD_{\rm val}|}\sum_{(x_i,y_i)\in\fD_{\rm val}}L(w^*(\lambda);x_i,y_i)
\\&s.t.~%
\ w^*(\lambda) = \argmin_{w\in\mathbb{R}^{c\times p}}\  \frac{1}{|\fD_{\rm tr}|}\sum_{(x_i,y_i)\in\fD_{\rm tr}} L(w;x_i,y_i) + \frac{1}{2cp}\sum_{j=1}^{c}\sum_{k=1}^{p}\exp(\lambda_k)w_{jk}^2, 
\end{align*}
where $\fD_{\rm tr}=\{(x_i,y_i)\}$ is the training dataset, $\fD_{\rm val}=\{(x_i,y_i)\}$ is the validation dataset, $L$ is the cross-entropy loss function, $c=20$ is the number of topics and $p=130,170$ is the dimension of features. Same as that in section~\ref{sec:data_hypercleaning}, we use the conjugate gradient descent method to approximate the Hessian vector.

For all algorithms listed in Figure~\ref{fig:logistic_reg1}, we choose the inner-loop learning rate and out-loop learning rate from $\{0.001,0.01,0.1,1,10,100,1000\}$, the iteration number of GD or AGD step from $\{5,10,30,50\}$, and the iteration number of CG step from $\{5,10,30,50 \}$. For BA-CG, we choose the iteration number of GD steps from $\{\lceil c (k+1)^{1/4} \rceil:c\in\{0.5,1,2,4\} \}$ as adopted by \citet{ghadimi2018approximation}. The results are shown in Figure~\ref{fig:logistic_reg1}. We observe that our \texttt{RAHGD} and \texttt{PRAHGD} converge faster than other algorithms.

\end{document}